\documentclass[11pt]{amsart}
\usepackage{amssymb, times}
\usepackage{graphicx}
\usepackage{amsfonts}
\usepackage{amsmath}
\usepackage{amsthm}
\usepackage{fancyhdr}
\usepackage{indentfirst}
\usepackage{hyperref}
\usepackage{comment}
\usepackage{color}
\usepackage{subcaption}
\usepackage{epsfig}
\usepackage{pst-grad} 
\usepackage{pst-plot} 
\usepackage[space]{grffile} 
\usepackage{etoolbox} 
\usepackage{mathrsfs} 
\usepackage{enumitem} 
\usepackage{cancel} 


\newcommand{\Ac}{\mathcal{A}^h}
\newcommand{\al}{\alpha}

\newcommand{\V}{\mathcal{V}}
\newcommand{\R}{\mathbb{R}}
\newcommand{\N}{\mathbb{N}}
\newcommand{\mH}{\mathcal{H}}
\newcommand{\F}{\mathcal{F}}

\newcommand{\sB}{\widetilde{B}}
\newcommand{\C}{\mathcal{C}}

\newcommand{\lc}{\scalebox{1.8}{$\llcorner$}}
\newcommand{\La}{\Lambda}
\newcommand{\si}{\sigma}
\newcommand{\Si}{\Sigma}
\newcommand{\de}{\delta}

\newcommand{\ep}{\epsilon}
\newcommand{\pr}{\prime}
\newcommand{\Om}{\Omega}

\newcommand{\Ga}{\Gamma}
\newcommand{\ga}{\gamma}

\newcommand{\lap}{\triangle}
\newcommand{\ti}{\tilde}

\newcommand{\Z}{\mathcal{Z}}
\newcommand{\mS}{\mathcal{S}}
\newcommand{\f}{\mathbf{f}}

\newcommand{\M}{\mathbf{M}}
\newcommand{\bL}{\mathbf{L}}
\newcommand{\bI}{\mathbf{I}}
\newcommand{\mf}{\mathbf{f}}

\newcommand{\mF}{\mathbf{F}}
\newcommand{\mR}{\mathcal{R}}

\newcommand{\X}{\mathfrak{X}}
\newcommand{\sA}{\mathscr{A}}

\newcommand{\btau}{\boldsymbol\tau}
\newcommand{\bleta}{\boldsymbol{\eta}}
\newcommand{\n}{\mathbf{n}}

\newcommand{\md}{\mathbf{d}}

\newcommand{\rom}[1]{\expandafter\romannumeral #1}
\newcommand{\Rom}[1]{\uppercase\expandafter{\romannumeral #1}}

\newcommand{\VarTan}{\operatorname{VarTan}}
\newcommand{\spt}{\operatorname{spt}}
\newcommand{\dist}{\operatorname{dist}}
\newcommand{\Div}{\operatorname{div}}

\newcommand{\vol}{\operatorname{Vol}} 

\newcommand{\Ric}{\operatorname{Ric}}
\newcommand{\Area}{\operatorname{Area}}

\newcommand{\Clos}{\operatorname{Clos}}
\newcommand{\interior}{\operatorname{int}}



\topmargin 0cm \oddsidemargin 0.51cm \evensidemargin 0.51cm
\textwidth 15.66cm \textheight 21.23cm

\begin{document}

\newtheorem{theorem}{Theorem}[section]
\newtheorem{proposition}[theorem]{Proposition}
\newtheorem{corollary}[theorem]{Corollary}

\newtheorem{claim}{Claim}

\theoremstyle{remark}
\newtheorem{remark}[theorem]{Remark}

\theoremstyle{definition}
\newtheorem{definition}[theorem]{Definition}

\theoremstyle{plain}
\newtheorem{lemma}[theorem]{Lemma}

\numberwithin{equation}{section}

\title[Min-max theory for Prescribed mean curvature]{Existence of hypersurfaces with prescribed mean curvature I -- Generic min-max}

\author[Xin Zhou]{Xin Zhou}
\address{Department of Mathematics, University of California Santa Barbara, Santa Barbara, CA 93106, USA}
\email{zhou@math.ucsb.edu}

\author[Jonathan J. Zhu]{Jonathan J. Zhu}
\address{Department of Mathematics, Harvard University, Cambridge, MA 02138, USA}
\email{jjzhu@math.harvard.edu}

\maketitle

\pdfbookmark[0]{}{beg}

\begin{abstract}
We prove that, for a generic set of smooth prescription functions $h$ on a closed ambient manifold, there always exists a nontrivial, smooth, closed hypersurface of prescribed mean curvature $h$. The solution is either an embedded minimal hypersurface with integer multiplicity, or a non-minimal almost embedded hypersurface of multiplicity one. 

More precisely, we show that our previous min-max theory, developed for constant mean curvature hypersurfaces, can be extended to construct min-max prescribed mean curvature hypersurfaces for certain classes of prescription function, including smooth Morse functions and nonzero analytic functions. In particular we do not need to assume that $h$ has a sign.
\end{abstract}

\setcounter{section}{-1}

\section{Introduction}
\label{S:intro}

Given a function $h:M\rightarrow \mathbb{R}$ on an ambient manifold, a hypersurface $\Sigma\subset M$ has prescribed mean curvature $h$ if its mean curvature satisfies \begin{equation}H_\Sigma = h|_\Sigma.\end{equation} Prescribed mean curvature (PMC) hypersurfaces $\Sigma=\partial \Omega$ are critical points of the functional 
\begin{equation}
\label{E:Ac0}
\mathcal{A}^h = \Area - \vol_h, 
\end{equation}
where $\vol_h(\Om)=\int_\Om h$ is the enclosed $h$-volume. PMC hypersurfaces are a canonical generalization of minimal and constant mean curvature (CMC) hypersurfaces, and have applications to physical phenomena such as capillary surfaces \cite[\S 1.6]{Finn86}. Indeed, the local existence theory (or Dirichlet problem) for PMC hypersurfaces is quite well-understood, with several results extending naturally from the CMC to the PMC setting \cite{Hildebrandt69b, Gulliver-Spruck71, Gulliver-Spruck72, Gulliver73, Steffen76, Duzaar93}. The global theory or existence problem for closed PMC hypersurfaces, however, is to the authors' knowledge almost completely open for nonconstant prescription functions $h$. 

In this article, we initiate a program to resolve the existence of closed PMC hypersurfaces, based on extending our min-max theory developed in \cite{Zhou-Zhu17} for CMC hypersurfaces. In particular, we prove that there exists a closed hypersurface of PMC $h$ for a generic set of prescription functions $h$:

\begin{theorem}
\label{thm:mainA}
Let $M^{n+1}$ be a smooth, closed Riemannian manifold of dimension $3\leq n+1\leq 7$. There is an open dense set $\mathcal{S}\subset C^\infty(M)$ of prescription functions $h$, for which there exists a nontrivial, smooth, closed, almost embedded hypersurface $\Sigma^n$ of prescribed mean curvature $h$. 
\end{theorem}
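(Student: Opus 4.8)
The plan is to combine two ingredients: a min-max construction of PMC hypersurfaces for a suitable class of prescription functions, and a genericity argument showing that this class contains an open dense subset of $C^\infty(M)$. The starting point is the observation that the functional $\Ac$ in \eqref{E:Ac0}, when restricted to boundaries $\Sigma = \partial\Omega$, admits a nontrivial min-max geometry: sweepouts of $M$ by boundaries detect a positive min-max width, essentially because $\vol_h$ is bounded on the space of Caccioppoli sets and hence the area term dominates. The first step is therefore to set up the Almgren–Pitts-type (or Simon–Smith, in low dimensions) min-max for $\Ac$ over one-parameter families, and to establish the basic compactness: min-max sequences subconverge, as varifolds, to a stationary varifold for $\Ac$, i.e. a varifold whose first variation is given by integrating $h$ against the normal. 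This is where I would quote, essentially verbatim, the CMC regularity and compactness machinery of \cite{Zhou-Zhu17}; the key point to check is that the arguments there, which were written for a constant $h=c$, only use boundedness and (where needed) mild regularity of $h$, so they carry over to, say, Morse functions or nonzero analytic $h$.

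The second step — and I expect this to be the main obstacle — is regularity of the min-max varifold, and in particular ruling out that it degenerates to something trivial or that the multiplicity-one/almost-embedded conclusion fails. For CMC the \cite{Zhou-Zhu17} theory produces an almost embedded hypersurface with multiplicity one away from the minimal (touching) set; in the PMC case one must control the nodal set $\{h=0\}$, since there the PMC equation degenerates to the minimal surface equation and higher multiplicity can occur. The role of the hypothesis ``$h$ Morse'' or ``$h$ analytic'' is precisely to make $\{h=0\}$ well-behaved (a smooth hypersurface, or at worst a lower-dimensional analytic set), so that the touching/minimal pieces of $\Sigma$ can be separated from the genuinely PMC pieces and the structure theorem applied on each. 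I would carry this out by: (i) showing the support of the min-max varifold is, away from a small singular set, an almost embedded $C^\infty$ hypersurface; (ii) on the region $\{h\neq 0\}$, upgrading to multiplicity one via a maximum-principle/unique-continuation argument (two sheets of a PMC hypersurface with $h\neq 0$ cannot touch while staying on the same side, by the strong maximum principle for the PMC operator); (iii) on $\{h=0\}$, using the good structure of the nodal set to patch. The removal of the interior singular set uses the dimension bound $n+1\le 7$ exactly as in the minimal/CMC case.

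The third step is the genericity. Here the plan is: for $h$ in the good class $\mathcal C$ (Morse functions, and more generally a class defined so that $\{h=0\}$ is suitably non-degenerate) the construction of Steps 1–2 goes through and produces a nontrivial closed almost-embedded PMC hypersurface; one then shows $\mathcal C$ contains an open dense subset $\mathcal S\subset C^\infty(M)$. Density is standard — Morse functions are dense, and one can further perturb to arrange non-degeneracy of the zero set — while openness requires that the relevant non-degeneracy conditions are stable under small $C^\infty$ (or $C^k$ for suitable $k$) perturbations, which again holds for the Morse/transversality conditions. A subtle point is that the min-max value and the regularity conclusion should be shown not to collapse under perturbation; but since we only need \emph{existence} of some nontrivial PMC hypersurface for each $h\in\mathcal S$, not continuity of the whole construction, it suffices that for each fixed $h\in\mathcal S$ Steps 1–2 apply. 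Assembling these, for every $h$ in the open dense set $\mathcal S$ we obtain the desired $\Sigma^n$, which proves Theorem~\ref{thm:mainA}.
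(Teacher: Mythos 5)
Your overall structure (a PMC min-max theorem for a good class of prescription functions, plus a genericity statement for that class) matches the paper's strategy, but there are two places where the proposal underestimates what is actually needed, and one of them is a genuine gap.

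\textbf{The genericity step is not ``standard transversality.''} You take the good class to be roughly ``Morse with $\{h=0\}$ a nice hypersurface,'' and assert that density follows by perturbing to arrange non-degeneracy of the zero set. This is insufficient. The condition the paper actually needs (encoded in property $(\dagger)$) is that the \emph{mean curvature} of $\Sigma_0 = \{h=0\}$ vanishes to at most finite order (unless $\Sigma_0$ is minimal). That $\Sigma_0$ is a smooth closed hypersurface is cheap (Sard on $0$ plus Morse density), but controlling the order of vanishing of $H_{\Sigma_0}$ under perturbation of $h$ is not a standard jet-transversality fact: $H_{\Sigma_0}$ depends on the zero set of $h$ and on the ambient metric, and perturbing $h$ moves the level set in a way that does not straightforwardly produce a finite jet of $H$. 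The paper resolves this with a mean curvature flow trick: it flows $\Sigma_0$ for a short time, interpolates to build a perturbed $h^t$ whose zero set is the flowed hypersurface, and uses backward uniqueness and parabolic strong unique continuation to rule out infinite-order vanishing of $H_{\Sigma_{0,t}}$ at positive times unless $\Sigma_0$ was minimal to begin with. Your proposal does not identify this difficulty or offer a substitute, and without it the openness/density claim for $\mathcal{S}$ is unproved.

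\textbf{The multiplicity and touching-set analysis is misstated.} You write that for $h\neq 0$ ``two sheets of a PMC hypersurface cannot touch while staying on the same side, by the strong maximum principle.'' That is only true for sheets with the \emph{same} orientation. The min-max limit produces ordered sheets with opposite orientations, whose difference solves an \emph{inhomogeneous} linear elliptic equation with right-hand side $h(x,u_1)+h(x,u_2)$; these sheets can and do touch, and when $h\neq 0$ the nonvanishing inhomogeneous term only forces the Hessian of the difference to have rank $\geq 1$, so the touching set is codimension one in $\Sigma$ --- it is not empty. Near $\{h=0\}$ the situation is worse: the inhomogeneous term vanishes and one must rule out infinite-order touching. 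The paper handles this by first showing (via the equation) that infinite-order touching forces $h|_{\Sigma_i}$ to vanish to infinite order, then invoking $(\dagger)$ to pass to graphs over the hypersurface $\{h=0\}$, and finally using Aronszajn-type strong unique continuation for elliptic \emph{inequalities} to conclude the sheets coincide with $\{h=0\}$ and are minimal. ``Patch near $\{h=0\}$'' glosses over all of this, and in particular over the reason why the finite-order-vanishing condition on $H_{\Sigma_0}$ is indispensable.

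Finally, a small omission: the paper normalizes $\int_M h \geq 0$ (harmless, since flipping the sign of $h$ flips orientation of the PMC hypersurface) so that $\mathcal{A}^h(M)\le 0$ and the sweepout has a nontrivial interior maximum; your proposal implicitly assumes this without saying so.
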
 

In subsequent works, we plan to complete the global existence theory by approximating an arbitrary given smooth function $h\in \mathcal{C}^\infty(M)$. Here a hypersurface is almost embedded if it locally decomposes into smooth sheets that touch but do not cross; in fact, our constructed PMC hypersurfaces $\Sigma$ decompose into at most two sheets, and have touching set of codimension 1 - this is somewhat surprisingly the same regularity as we obtained in the CMC setting \cite{Zhou-Zhu17}. The dimension restriction comes from the regularity theory for stable minimal hypersurfaces \cite{SS81} and is typical of variational methods for hypersurfaces.

The existence problem for closed PMC hypersurfaces may be viewed as a higher dimensional \textit{extended} Arnold-Novikov conjecture. The original question, which remains open, is the existence of closed embedded curves of prescribed constant geodesic curvature on a topological $S^2$ (see \cite{Ginzburg96, Rosenberg-Schneider11, Schneider11} for more backgrounds); Arnold \cite[1996-17, 1996-18]{Arnold04} later posed the natural extension to nonconstant prescribed curvature. Our previous theory \cite{Zhou-Zhu17} has already completely resolved the higher dimensional problem for constant (mean) curvature. 

In Theorem \ref{thm:mainA}, we use the set $\mathcal{S}=\mathcal{S}_{M,g}$ of Morse functions whose zero locus is a (possibly empty) hypersurface whose mean curvature vanishes to finite order. The mean curvature flow gives us a neat argument to show that this set is indeed generic:

\begin{proposition}
\label{prop:genA}
Let $(M,g)$ be a smooth closed Riemannian manifold. Consider the set $\mathcal{S}$ of smooth Morse functions $h$ such that the zero set $\{h=0\}=:\Sigma_0$ is a smooth closed hypersurface, and the mean curvature of ${\Sigma_0}$ vanishes to at most finite order. Then $\mathcal{S}$ is open and dense in $C^\infty(M)$. \end{proposition}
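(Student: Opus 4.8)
The plan is to express $\mathcal{S}$ as the set of $h\in C^\infty(M)$ satisfying three conditions: $(i)$ $h$ is Morse; $(ii)$ $0$ is a regular value of $h$, so that $\Sigma_0:=\{h=0\}$ is a smooth closed hypersurface or is empty --- equivalently, under $(i)$, that $\Sigma_0$ is a smooth hypersurface, since by the Morse lemma a nonempty smooth zero level set of a Morse function cannot contain a critical point; and $(iii)$ $H_{\Sigma_0}$ vanishes to infinite order at no point. The key elementary observation is that, because $\Sigma_0$ is compact, $(iii)$ is equivalent to the existence of a \emph{uniform} $N$ with some derivative of $H_{\Sigma_0}$ of order $\leq N$ nonzero at each point (a nonvanishing derivative persists near that point, then use compactness), and hence such a derivative survives every $C^N$-small perturbation --- so that ``$H_\Sigma$ nowhere flat'' is an open condition on $C^\infty(\Sigma)$ for any fixed compact $\Sigma$. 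Granting this, openness of $\mathcal{S}$ follows: $(i)$ and $(ii)$ are classically open in $C^\infty(M)$ (for $(ii)$, $|\nabla h|$ is bounded below near $\{h=0\}$ by compactness, a $C^1$-open condition, while $\Sigma_0=\emptyset$ means $0\notin h(M)$, which is $C^0$-open); and near any $h_0\in\mathcal{S}$ with $\Sigma_0(h_0)\neq\emptyset$ the implicit function theorem writes $\Sigma_0(h)$ as a normal graph over $\Sigma_0(h_0)$ whose graph function, hence whose mean curvature read off as a function on $\Sigma_0(h_0)$, depends continuously in $C^\infty$ on $h$, so $(iii)$ is open near $h_0$.

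For density, take any $h\in C^\infty(M)$. A small perturbation makes $h$ Morse; since a Morse function on a closed manifold has only finitely many critical values, subtracting a small generic constant then makes $0$ a regular value while keeping $h$ Morse. If $\{h=0\}=\emptyset$ we are done; otherwise $\Sigma_0$ is a nonempty smooth closed hypersurface. A further small perturbation arranges that \emph{no component of $\Sigma_0$ is minimal}: replacing $\Sigma_0$ by the normal graph of a small $\epsilon\psi$ changes $H_{\Sigma_0}$ by $\epsilon L\psi+O(\epsilon^2)$, where $L=\Delta_{\Sigma_0}+|A_{\Sigma_0}|^2+\Ric_M(\nu,\nu)$ is the Jacobi operator, and choosing $\psi$ with $L\psi\not\equiv 0$ on every component (possible since the bad $\psi$ form a finite union of proper linear subspaces of $C^\infty(\Sigma_0)$) makes the perturbed hypersurface non-minimal on every component for small $\epsilon$; realizing this graph as $\{h\circ\Phi^{-1}=0\}$ for a diffeomorphism $\Phi$ close to the identity preserves the Morse and regular-value properties along with the smallness. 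Thus we may assume $h$ is Morse, $0$ is regular, and $\Sigma_0$ is a nonempty smooth closed hypersurface no component of which is minimal.

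Now run the mean curvature flow $\{\Sigma_0^t\}_{t\in[0,\delta)}$ starting from $\Sigma_0$; for $\delta$ small it is smooth up to $t=0$, and each $\Sigma_0^t$ is a small normal graph over $\Sigma_0$. Along the flow the scalar $H:=H_{\Sigma_0^t}$ satisfies the linear, uniformly parabolic equation $\partial_tH=\Delta_{\Sigma_0^t}H+\bigl(|A_{\Sigma_0^t}|^2+\Ric_M(\nu^t,\nu^t)\bigr)H$ with smooth coefficients on compact subslabs. Differentiating this equation shows that if $H(\cdot,t_0)$ vanishes to infinite order in space at some $x_0$ with $t_0>0$, then all space-time derivatives of $H$ vanish at $(x_0,t_0)$; strong unique continuation for parabolic equations then forces $H\equiv 0$ near $(x_0,t_0)$, and an open--closed argument along the connected component of the space-time track $\bigcup_t\Sigma_0^t\times\{t\}$ propagates this to $H\equiv 0$ on that component for all $t\in(0,\delta)$, so this component is stationary under the flow and hence coincides with a component of $\Sigma_0$ --- contradicting non-minimality. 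Therefore $H_{\Sigma_0^t}$ is nowhere flat for \emph{every} $t\in(0,\delta)$, and realizing $\Sigma_0^t$ (for any small $t>0$) as $\{h\circ\Phi_t^{-1}=0\}$ for a diffeomorphism $\Phi_t$ close to the identity yields a member of $\mathcal{S}$ arbitrarily close to $h$.

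I expect the crux to be the mean curvature flow step: showing that the flow cannot maintain infinite-order vanishing of $H$ at any point. It rests on the evolution equation for $H$ together with the bootstrap upgrading spatial flatness to space-time flatness, and above all on strong unique continuation for parabolic equations with merely smooth, variable coefficients. The perturbative reductions, the smooth dependence of $\Sigma_0$ and its mean curvature on $h$, and the openness of the nowhere-flat condition on a compact manifold are comparatively routine.
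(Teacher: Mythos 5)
Your approach mirrors the paper's: perturb so that $h$ is Morse with $0$ a regular value and no component of $\Sigma_0$ minimal, then run mean curvature flow on $\Sigma_0$ and argue that at positive times the mean curvature of the evolving hypersurface cannot vanish to infinite order. The perturbative reductions, the openness argument, and the construction of a nearby $h^t$ with zero set $\Sigma_0^t$ are essentially the same as in the paper. However, the unique continuation step has a gap. Spacelike strong unique continuation for the equation $\partial_t H = \Delta_{\Sigma_0^t} H + (|A|^2 + \Ric(\nu,\nu))H$ gives you $H(\cdot,t_0)\equiv 0$ on the relevant spatial component \emph{at the single time} $t_0$, but it does not by itself propagate $H\equiv 0$ backward in time. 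In your open--closed argument, the set $\{t : H_{\Sigma_0^t}\equiv 0 \text{ on the component}\}$ is closed by continuity, and open in the forward direction since a minimal slice remains stationary under MCF; but openness in the backward direction is precisely the nontrivial assertion that an MCF which reaches a minimal hypersurface at time $t_0$ must have been stationary for $t<t_0$. MCF is not backward well-posed, so this does not come for free, and it is not a corollary of spacelike strong unique continuation.

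The paper handles this by invoking two separate ingredients: the spacelike strong unique continuation principle (to conclude $H(\cdot,t_0)\equiv 0$), and then \emph{backward uniqueness for mean curvature flow} (Kotschwar; {\L}ojasiewicz--Murphy type results) applied to the given flow and the constant-in-time flow at $\Sigma_0^{t_0}$, which coincide at time $t_0$ and hence coincide for all $t<t_0$; this forces $\Sigma_0$ itself to be minimal, the desired contradiction. You should add the backward uniqueness step explicitly; without it, the conclusion that the component ``coincides with a component of $\Sigma_0$'' is not justified.
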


In fact, our min-max theory is powerful enough to handle still more general prescription functions, namely those that satisfy either 
\begin{enumerate}
\item[($\dagger$)] If $\Sigma^n\subset M^{n+1}$ is a smoothly embedded hypersurface and $h|_\Sigma$ vanishes to all orders at $p\in \Sigma$, then there exists $r>0$ for which $\Sigma_0= \{h=0\} \cap B_r(p)$ is a connected, smoothly embedded hypersurface tangent to $\Sigma$ at $p$, and if the mean curvature of $\Sigma_0$ vanishes to infinite order at any point, then it vanishes identically; or
\item[($\ddagger$)] The zero set of $h:M^{n+1}\rightarrow \mathbb{R}$ is contained in a countable union of connected, smoothly embedded $(n-1)$-dimensional submanifolds. 
\end{enumerate}

Any function in $\mathcal{S}$ satisfies $(\dagger)$, as does any analytic function on a real analytic manifold. A general function satisfying $(\dagger)$, however, could have nontrivial minimal hypersurfaces in its zero set. In this case each component of our constructed PMC hypersurface is either one of these (embedded) minimal components or a definitively non-minimal component, which is almost embedded with codimension 1 touching set. Note, however, that $(\ddagger)$ precludes the possibility of minimal PMC hypersurfaces.

\vspace{1em}

The local existence theory of PMC hypersurfaces is a natural problem for geometric PDE, and has been fairly well-understood by parametric methods, such as \cite{Hildebrandt69b, Gulliver-Spruck71, Gulliver-Spruck72, Gulliver73, Steffen76, Duzaar93}, and also non-parametric methods \cite{Giaquinta74, Gerhardt74, Giusti78, Amster-Mariani03}. For the latter, however, graphical PMC hypersurfaces have typically been studied only for prescription functions independent of the vertical direction. In \cite[Problem 59]{Yau82}, Yau posed the global existence problem of finding PMC surfaces in $\mathbb{R}^3$, which inspired some important partial results such as \cite{Treibergs-Wei83, Gerhardt98, Yau97}. Finally we take care to mention the results of \cite{Duzaar-Steffen96}, obtained through a homological (currents) approach.

\vspace{1em}
In order to prove Theorem \ref{thm:mainA}, we consider the variational properties of PMC hypersurfaces as critical points of the $\mathcal{A}^h$-functional. Because we are interested in the global problem of finding closed PMC hypersurfaces, it is far from clear direct minimization of the $\mathcal{A}^h$ functional can produce a solution other than the empty domain $\mathcal{A}^h(\emptyset)=0$ or the total manifold $\mathcal{A}^h(M) = -\int_M h$. Therefore, the min-max method becomes the natural way to find nontrivial critical points of $\Ac$. 

The min-max theory for minimal submanifolds was initiated by Almgren \cite{AF62}, and has been a remarkably successful tool in the study of the area functional. Using methods of geometric measure theory, Almgren was able to prove the existence of a nontrivial weak solution as stationary integral varifolds \cite{AF65} in any dimension and codimension. In codimension one, higher regularity was established by Pitts \cite{P81} (for $2\leq n\leq 5$), and later Schoen-Simon [37] (for $n\geq 6$). Colding-De Lellis \cite{CD03} established the corresponding theory using smooth sweepouts based on ideas of Simon-Smith \cite{Sm82}. The preceding body of work completely resolved the $h\equiv 0$ case of Theorem \ref{thm:mainA}.

Recently, Marques-Neves \cite{MN14, Agol-Marques-Neves16, MN17} used the Almgren-Pitts min-max theory to resolve a number of longstanding open problems in geometry, including their celebrated proof of the Willmore conjecture. Consequently, the min-max program has seen a series of developments in various directions, including (but not limited to) \cite{DT13, Guaraco15, Ketover-Zhou15, MN16, Liokumovich-Marques-Neves18, DR16, LiZ16, Irie-Marques-Neves18, Chodosh-Mantoulidis18, Marques-Neves18, Song18}. 
In \cite{Zhou-Zhu17}, we extended the min-max theory to study the $\mathcal{A}^c$ functional ($h\equiv c$) for the construction of CMC hypersurfaces. The present work thus represents a natural continuation of our work, further extending the min-max theory to the PMC setting and the study of the $\mathcal{A}^h$ functional. 

\subsection{Min-max procedure} 

We now give a heuristic overview of our min-max method. In the main proofs, for technical reasons we will work with discrete families as in Almgren-Pitts, but here we will describe the key ideas using continuous families to elucidate those ideas.

Let $M$, $h$ be as in Theorem \ref{thm:mainA}. The $\mathcal{A}^h$ functional (\ref{E:Ac0}) is defined on open sets $\Omega$ with rectifiable boundary by  $\mathcal{A}^h(\Omega) = \Area(\partial \Omega) -  \int_\Omega h$. Denote $I=[0, 1]$. Consider a continuous $1$-parameter family of sets with rectifiable boundary
\[ \{\Om_x: x\in I\}, \text{ with $\Om_0=\emptyset$ and $\Om_1=M$}. \]

Fix such a family $\{\Om^0_x\}$, and consider its homotopy class $[\{\Om^0_x\}]=\big{\{ } \{\Om_x\}\sim \{\Om^0_x\} \big{\}}$.
The {\em $h$-min-max value} (or {\em $h$-width)} is defined as
\[ \bL^h=\inf_{\{\Om_x\}\sim \{\Om^0_x\}}\max\{\Ac(\Om_x): x\in I\}. \]

A sequence $\{\{\Om^i_x\}: i\in\N\}$ with $\max_{x\in I} \Ac(\Om^i_x)\to \bL^h$ is typically called a {\em minimizing sequence}, and any sequence $\{\Om^i_{x_i}: x_i\in (0, 1), i\in N\}$ with $\Ac(\Om^i_{x_i})\to \bL^h$ is called a {\em min-max sequence}. 

Our main result (for the precise statement see Theorem \ref{thm:main-body}) then says that there is a nice minimizing sequence $\{\{\Om^i_x\}: i\in\N\}$, and some min-max sequence $\{\Om^i_{x_i}: x_i\in (0, 1), i\in N\}$, such that: 

\begin{theorem} 
\label{T: main}
Suppose that $h$ satisfies $(\dagger)$ or $(\ddagger)$ and that $\int_M h \geq 0$. Then the sequence $\partial\Om^i_{x_i}$ converges as varifolds to a nontrivial, smooth, closed, almost embedded hypersurface $\Si$ of prescribed mean curvature $h$. Each component of $\Sigma$ is either a closed, embedded minimal hypersurface in the zero set of $h$; or an almost embedded hypersurface on which the convergence is multiplicity 1. 

If $h$ satisfies $(\ddagger)$, only the latter case may occur.
\end{theorem}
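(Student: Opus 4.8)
The plan is to adapt the Almgren--Pitts discrete min-max scheme to the $\Ac$ functional, exactly as we did for CMC hypersurfaces in \cite{Zhou-Zhu17}, and then to strengthen the regularity and multiplicity conclusions using the structural hypotheses $(\dagger)$ and $(\ddagger)$ on the zero set $\{h=0\}$. First I would replace the continuous sweepouts $\{\Om_x\}$ by discrete families valued in the space of Caccioppoli sets, subject to the fineness and no-concentration-of-mass conditions of Almgren--Pitts, and define $\bL^h$ as the infimum of the max of $\Ac$ over the homotopy class of the fixed sweepout $\{\Om^0_x\}$ from $\emptyset$ to $M$. The first point to establish is $\bL^h>0$: since $\Ac(\emptyset)=0$ while $\Ac(M)=-\int_M h\le 0$ under our hypothesis, and since the isoperimetric inequality on $(M,g)$ gives $\Ac(\Om)\ge\tfrac12\Area(\partial\Om)$ whenever $\vol(\Om)$ is sufficiently small, any sweepout must pass through a set whose $\Ac$ exceeds a positive constant depending only on $(M,g)$ and $\|h\|_\infty$. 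The assumption $\int_M h\ge 0$ is harmless---replace $h$ by $-h$ and reverse orientations---and is precisely what prevents the endpoint $\Om_1=M$ from absorbing the width.

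Next I would run the pull-tight procedure adapted to $\Ac$. Since $\vol_h$ is a continuous, lower-order perturbation of $\Area$, the tightening deformation and the interpolation/discretization results of \cite{Zhou-Zhu17} carry over, producing a minimizing sequence every min-max sequence of which subconverges---as varifolds, with enclosed regions $\Om^i_{x_i}$ subconverging in $L^1$---to a varifold $V$ that is stationary for $\Ac$, i.e.\ has weak mean curvature $h$. Pitts' combinatorial argument, adapted for $\Ac$ as in \cite{Zhou-Zhu17}, then furnishes a min-max sequence that is almost minimizing in small annuli for $\Ac$. From this one constructs $\Ac$-replacements on small annuli; by the local regularity theory for the PMC (Dirichlet) problem \cite{Steffen76, Duzaar93} these are smooth where $h\ne0$ and stable minimal where forced into $\{h=0\}$, and gluing replacements together with the usual tangent-cone and unique-continuation arguments---invoking the dimension restriction $n\le 6$ through the Schoen--Simon theory \cite{SS81}---shows $V=\sum_j m_j|\Si_j|$ with each $\Si_j$ smooth and almost embedded, decomposing into at most two sheets with codimension-$1$ touching set. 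At a point where $h|_{\Si_j}$ vanishes to infinite order the hypothesis $(\dagger)$ (resp.\ $(\ddagger)$) pins down the local structure of $\{h=0\}$ and forces the dichotomy: either $\Si_j$ is a closed embedded minimal hypersurface contained in $\{h=0\}$, or $h|_{\Si_j}\not\equiv0$ and $\Si_j$ is definitively non-minimal.

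It then remains to show $m_j=1$ on every non-minimal component and to dispose of the $(\ddagger)$ case. If $m_j\ge2$ on some $\Si_j$ with $h|_{\Si_j}\not\equiv0$, then since $V$ is a limit of boundaries $\partial\Om^i_{x_i}$ and $\Si_j$ is two-sided with its mean curvature determined, via the unit normal, by the enclosed region, a multiplicity-$\ge2$ sheet would consist of ordered parallel smooth sheets solving the same PMC equation on the same side; at a touching point the difference of the corresponding graphs satisfies a homogeneous elliptic inequality whose zeroth-order sign is pinned by $h\ne0$, forcing the sheets to coincide and hence to bound a degenerate region---contradicting that the sweepout has nonzero $h$-volume variation across $\Si_j$. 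This is the PMC analogue of the CMC multiplicity-one argument of \cite{Zhou-Zhu17}; the essential point is that $h\ne0$ breaks the symmetry that would otherwise permit doubling. Finally, if $h$ satisfies $(\ddagger)$ then $\{h=0\}$ lies in a countable union of $(n-1)$-dimensional submanifolds and therefore contains no $n$-dimensional minimal hypersurface, so the minimal case cannot occur and $\Si$ consists entirely of non-minimal, multiplicity-one components.

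I expect the behavior of the min-max limit near $\{h=0\}$, carried out in the regularity step, to be the main obstacle. Away from $\{h=0\}$ the PMC operator is uniformly elliptic and the CMC arguments of \cite{Zhou-Zhu17} transfer with only cosmetic changes; but along $\{h=0\}$ the equation degenerates to the minimal surface equation, so one must simultaneously rule out pathological accumulation of $\Si$ against $\{h=0\}$ and establish cleanly the embedded-minimal-versus-non-minimal dichotomy. The hypotheses $(\dagger)$ and $(\ddagger)$ are engineered precisely to make this step tractable, and verifying that the replacement and unique-continuation machinery interacts correctly with them is the heart of the matter.
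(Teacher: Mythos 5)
Your proposal follows the same broad Almgren--Pitts route as the paper (discrete sweepouts, positive width via the small-volume isoperimetric inequality, tightening, $h$-almost-minimizing and annular replacements, regularity via unique continuation, and finally the multiplicity dichotomy). But there is one genuine misstep and one place where the argument is too hand-wavy to carry the conclusion.

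The misstep is in the tightening step. You assert that the tightened min-max limit $V$ ``is stationary for $\Ac$, i.e.\ has weak mean curvature $h$.'' This is not a meaningful statement at this stage: $\Ac$ is a functional on Caccioppoli sets, not on varifolds (it depends on the enclosed region, not just on $|\partial\Om|$), so there is no variational characterization of $V$ ``being critical for $\Ac$'' to appeal to. The paper's tightening (Proposition \ref{P:tightening}) only produces $c$-bounded first variation with $c=\sup|h|$, and the fact that this relatively loose bound suffices to run the replacement and regularity machinery is one of the key technical observations inherited from \cite{Zhou-Zhu17}. That $V$ has prescribed mean curvature $h$ is a \emph{conclusion} of the regularity theorem, obtained only after the replacement/gluing argument identifies $V$ with an almost embedded PMC hypersurface; it cannot be an input at the tightening stage.

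The hand-wavy part is the multiplicity-one argument. Your ``degenerate region / nonzero $h$-volume variation'' heuristic is not the mechanism in the paper, and as stated it does not close. The paper's argument (Theorem \ref{T:compactness}) works directly with the structure of $V$ as a limit of boundaries $\partial\Om_k$: if two touching sheets had the same orientation with no intermediate sheet of opposite orientation, the Constancy Theorem applied to $\Om_k$ restricted to the three complementary regions yields an orientation contradiction; if there \emph{is} an intermediate sheet of opposite orientation, the same-orientation maximum principle (Lemma \ref{lem:sameorientation}, Serrin) forces the two outer sheets to coincide, which in turn forces the intermediate sheet to coincide as well and therefore to be minimal (and hence $h\equiv 0$ there by the opposite-orientation consideration). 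This is precisely what makes the minimal/non-minimal dichotomy into a rigid statement; it is not merely a consequence of ``the PMC equation breaking a symmetry.'' You would need to reproduce this case analysis, together with the Step 2--5 gluing argument of Theorem \ref{T:main-regularity} that propagates it across annular interfaces, to recover the stated multiplicity conclusion.
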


Here, and for the remainder of this article, we make the assumption that $\int_M h\geq 0$. This ensures that $\mathcal{A}_h(M) \leq 0$ and hence any positive width sweepout must have a nontrivial maximal slice; we can always guarantee this condition by changing the sign of $h$, and doing so does not affect the existence result, since a two-sided hypersurface with PMC $h$ will have PMC $-h$ under its opposite orientation. 


Our proof broadly follows the Almgren-Pitts scheme, but with several important difficulties. An important observation is that several of the innovations we developed in \cite{Zhou-Zhu17} may be used to handle the $\mathcal{A}^h$ functional even for nonconstant $h$. However, in the present setting we must develop the background compactness and regularity theory for PMC hypersurfaces, and when $h$ is allowed to have a significant zero set it becomes crucial to obtain good control of any touching phenomena. 

To describe these challenges, we first review the Almgren-Pitts min-max method, which may be organized into five broad steps as follows:

\begin{itemize}
\item Construct a sweepout with positive width, and extract a minimizing sequence;
\item Apply a `tightening' map to construct a new sequence whose varifold limit satisfies a variational property and an `almost-minimizing' property;
\item Use these properties to construct `replacements' on annuli which must be regular;
\item Apply successive concentric annular replacements to the min-max limit and show that they coincide with each other, and hence extend to the center;
\item Show that the min-max limit coincides with the replacement near the center.
\end{itemize}

Given a minimizing sequence, in the minimal ($h\equiv 0$) setting one constructs a new `tightened' sequence, for which any min-max (varifold) limit must be stationary - that is, a weak solution in the sense of first variations. The $\mathcal{A}^h$ functional, however, is not well-defined on varifolds, so it is not straightforward to formulate a notion of weak solution for its critical points. In the PMC setting, we are able to show that the min-max limit $V$ (after tightening) has first variation bounded by $c= \sup |h|$. As in \cite{Zhou-Zhu17}, it is an important and delicate observation that this relatively loose variational property still provides enough control to develop the remaining regularity theory. 

We also show that the limit $V$ is {\em $h$-almost minimizing}, which we formulate as property of being (a limit of) constrained almost-minimizers for the $\mathcal{A}^h$ functional (see Definition \ref{D:c-am-varifolds}). To construct {(\em $h$-) replacements} in a subset $U\subset M$, one then solves the corresponding series of constrained minimization problems. For the $\mathcal{A}^h$ functional, each (local) $\mathcal{A}^h$-minimizer will be an open set $\Omega_i^*$ with stable, regular PMC boundary in $U$, and the $h$-replacement $V^*$ is obtained as the varifold limit $\lim |\partial \Omega_i^*|$. At this point, it is clear that a good compactness theory for PMC hypersurfaces is essential for the regularity of $V^*$.

To establish the regularity of $V$, successive replacements $V^*$ and $V^{**}$ are applied on two overlapping concentric annuli $A_1$ and $A_2$. The goal is to show that the replacements glue together smoothly on the overlapping region and may thus be extended all the way to the center, by taking further replacements. Here the main technical issue is to preserve the regularity and uniqueness across the gluing interface. 

Finally, one would like to prove that the min-max limit $V$ coincides with the extended replacement $V^*$ near the center. In the minimal setting, one can appeal to the constancy theorem. Since in the PMC setting we also make an assertion about the multiplicity of $V$, we instead directly prove that $V^*$ has at worst a removable singularity at the center, then use a moving sphere argument to show that the densities of $V$ and $V^*$ are the same in the annular region. 

\vspace{1em}

The primary concern in the PMC setting is the compactness and regularity theory for PMC hypersurfaces. Whilst it is not too difficult to show that limits of PMC hypersurfaces remain almost embedded, for the moving sphere and gluing arguments above it is essential that we may avoid the bulk of any touching singularities of the replacement hypersurfaces by choosing the gluing interface to be transverse to the touching set. If the touching sets are small enough - $(n-1)$-rectifiable, for instance - then we may indeed achieve this by Sard's lemma. 

Unlike the minimal or CMC settings, however, in the PMC setting we may not have a complete two-sided or even one-sided maximum principle. Moreover, whilst the difference of PMC hypersurfaces with opposite orientations will satisfy an elliptic PDE, there is an inhomogenous term which does not have a sign for general $h$. The danger that arises is the possibility of infinite order but non-identical touching, for which the tamest example is two PMC sheets sticking together as a minimal hypersurface on a small subset. To overcome this, we use unique continuation for elliptic differential \textit{inequalities} to prove the necessary compactness theorem: Under assumptions $(\dagger)$ or $(\ddagger)$, any infinite order touching for the limit implies that all sheets are identical, minimal and contained in the zero set of $h$. 

The possibility of minimal components also presents a technical issue for the gluing step, as again one would like to prevent non-minimal sheets gluing together into a higher multiplicity minimal sheet. We can rule this out by first obtaining a putative gluing to a smooth PMC hypersurface, then using unique continuation results coming from our work to control the touching set.

\vspace{1em}
To complete the details of the min-max procedure, we adapt several ideas that we previously introduced to handle the CMC setting \cite{Zhou-Zhu17}, demonstrating also the flexibility of those techniques. For instance, an essential observation is that the $h$-volume is still of lower order than the area term in the $\mathcal{A}^h$ functional. This immediately allows us to show that $\bL^c$ is positive on any sweepout, as a consequence of the isoperimetric inequality for small volumes (see Theorems \ref{T:Isoperimetric areas} and \ref{T:existence of nontrivial sweepouts}). Moreover, an important issue in both the CMC and PMC settings is that the total mass of the replacement $V^*$ may differ from the total mass of the original varifold $V$; however the mass defect is again controlled by the higher order term $\|h\|_\infty \vol(U)$, which converges to zero under any blowup process. Using this insight, we are able to prove that any blowup of the min-max limit $V$ has the good replacement property of Colding-De Lellis, and is therefore regular (see Proposition \ref{L:blowup is regular}); in particular the tangent cones of $V$ are always planes. 

Finally, to complete the gluing in the non-minimal setting, we must nevertheless work around a nontrivial touching set at the gluing interface. Again the compactness theory for PMC hypersurfaces is key, and an important step is to show that the second replacement $V^{**}$ may be represented by a boundary in $A_1\cup A_2$. This yields that $V^*$ and $V^{**}$ glue together - with matching orientations - as desired. Near the touching set, we then use the graphical decomposition into embedded sheets together with the gluing along regular part to properly match the sheets together.

\subsection{Outline of the paper}

Our basic notation and background material is described in Section \ref{S:Notation and background}. Then in Section \ref{S:Preliminaries} we recall some preliminary results including the regularity for $\mathcal{A}^h$-minimizers. 

In Section \ref{sec:compactness}, we describe curvature estimates and compactness for almost embedded PMC hypersurfaces, including unique continuation lemmas and estimates of the touching set for functions satisfying $(\dagger)$ or $(\ddagger)$. We also prove there the genericness of condition $(\dagger)$, that is, Proposition \ref{prop:genA}. 

In Section \ref{S:The c-Min-max construction} we formulate the precise min-max procedure, and prove the existence of nontrivial (positive width) sweepouts. %
Then in Section \ref{S:tightening} we review the tightening process for varifolds of bounded first variation and its consequences for the $\mathcal{A}^h$ functional. %
In Section \ref{S:c-Almost minimizing} we further observe that the replacement theory for constant prescribed mean curvature extends to an $h$-replacement theory for our suitable functions $h$. %

Finally, in Section \ref{S:Regularity for c-min-max varifold} we complete the regularity of the min-max varifold. 

\vspace{0.5em}
{\bf Acknowledgements}: 
X. Zhou is partially supported by NSF grant DMS-1811293.  
J. Zhu is partially supported by NSF grant DMS-1607871.
The authors want to thank Prof. Richard Schoen and Prof. Bill Minicozzi for their interest, advice and encouragement on this work. 
The second author would also like to thank Boyu Zhang for helpful discussions.

\section{Notation}
\label{S:Notation and background}

In this section, we collect some notions. We refer to \cite{Si83} and \cite[\S 2.1]{P81} for further materials in geometric measure theory.

Let $(M^{n+1}, g)$ denote a closed, oriented, smooth Riemannian manifold of dimension $3\leq (n+1)\leq 7$. Assume that $(M, g)$ is embedded in some $\R^L$, $L\in\N$. $B_r(p), \sB_r(p)$ denote respectively the Euclidean ball of $\R^L$ or the geodesic ball of $(M, g)$. We denote by $\mH^k$ the $k$-dimensional Hausdorff measure; $\bI_{k}(M)$ the space of $k$-dimensional integral currents in $\R^L$ with support in $M$; $\Z_{k}(M)$ the space of integral currents $T\in\bI_{k}(M)$ with $\partial T=0$; $\V_{k}(M)$ the closure, in the weak topology, of the space of $k$-dimensional rectifiable varifolds in $\R^L$ with support in $M$; $G_k(M)$ the Grassmannian bundle of un-oriented $k$-planes over $M$; $\F$ and $\M$ respectively the flat norm \cite[\S 31]{Si83} and mass norm \cite[26.4]{Si83} on $\bI_k(M)$; $\mF$ the varifold $\mF$-metric on $\V_k(M)$ and currents $\mF$-metric on $\bI_k(M)$, \cite[2.1(19)(20)]{P81}; $\C(M)$ or $\C(U)$ the space of sets $\Om\subset M$ or $\Om\subset U\subset M$ with finite perimeter (Caccioppoli sets), \cite[\S 14]{Si83}\cite[\S 1.6]{Gi}; and $\X(M)$ or $\X(U)$ the space of smooth vector fields in $M$ or supported in $U$.

We also utilize the following definitions:
\begin{enumerate}[label=\alph*), leftmargin=1cm]
\label{En: notations}
\item Given $T\in\bI_{k}(M)$, $|T|$ and $\|T\|$ denote respectively the integral varifold and Radon measure in $M$ associated with $T$;
\item Given $c>0$, a varifold $V\in \V_k(M)$ is said to have {\em $c$-bounded first variation in an open subset $U\subset M$}, if
\[ |\de V(X)|\leq c \int_M|X|d\mu_V, \quad \text{for any } X\in\X(U); \]
here the first variation of $V$ along $X$ is $\de V(X)=\int_{G_k(M)} div_S X(x)d V(x, S)$, \cite[\S 39]{Si83};
\item $U_r(V)$ denotes the ball in $\V_k(M)$ under $\mF$-metric with center $V\in\V_k(M)$ and radius $r>0$;
\item Given $p\in\spt\|V\|$, $\VarTan(V,p)$ denotes the space of tangent varifolds of $V$ at $p$, \cite[42.3]{Si83};
\item Given a smooth, immersed, closed, orientable hypersurface $\Si$ in $M$, or a set $\Om\in\C(M)$ with finite perimeter, $[[\Si]]$, $[[\Om]]$ denote the corresponding integral currents with the natural orientation, and $[\Si]$, $[\Om]$ denote the corresponding integer-multiplicity varifolds;
\item $\partial\Om$ denotes the (reduced)-boundary of $[[\Om]]$ as an integral current, and $\nu_{\partial\Om}$ denotes the outward pointing unit normal of $\partial \Om$, \cite[14.2]{Si83}.
\end{enumerate}

In this paper, we are interested in the following weighted area functional defined on $\C(M)$. Given $h:M\rightarrow \mathbb{R}$, define the {\em $\Ac$-functional} on $\C(M)$ as
\begin{equation}
\label{E: Ac}
\Ac(\Om)=\mH^n(\partial\Om)-\int_\Omega h. 
\end{equation}
The {\em first variation formula} for $\Ac$ along $X\in \X(M)$ is (see \cite[16.2]{Si83}) 
\begin{equation}
\label{E: 1st variation for Ac}
\de\Ac|_{\Om}(X)=\int_{\partial\Om}div_{\partial \Om}X d\mu_{\partial\Om}-\int_{\partial\Om}h\langle X,\nu\rangle \, d\mu_{\partial\Om},
\end{equation}
where $\nu= \nu_{\partial \Om}$ is the outward unit normal on $\partial \Om$. 

When the boundary $\partial\Om=\Si$ is a smooth immersed hypersurface, we have \[div_{\Si}X=H\langle   X,\nu\rangle,\] where $H$ is the mean curvature of $\Si$ with respect to $\nu$; if $\Om$ is a critical point of $\Ac$, then (\ref{E: 1st variation for Ac}) directly implies that $\Si=\partial \Om$ must have mean curvature $H=h|_\Sigma$ with respect to the outward unit normal $\nu$. In this case, we can calculate the {\em second variation formula} for $\Ac$ along normal vector fields $X\in \X(M)$ such that $X=\varphi\nu $ along $\partial\Om=\Si$ where $\varphi\in C^\infty(\Si)$, \cite[Proposition 2.5]{BCE88}, 
\begin{equation}
\label{E: 2nd variation for Ac}
\de^2\Ac|_{\Om}(X,X) = \Rom{2}_\Si (\varphi,\varphi) =\int_{\Si}\left( |\nabla\varphi|^2-\left(Ric^M(\nu, \nu)+|A^\Si|^2 - \partial_\nu h\right)\varphi^2\right)d\mu_{\Si}. 
\end{equation}
In the above formula, $\nabla\varphi$ is the gradient of $\varphi$ on $\Si$; $Ric^M$ is the Ricci curvature of $M$; $A^\Si$ is the second fundamental form of $\Si$.

\section{Preliminaries}
\label{S:Preliminaries}

In this section, we collect some preliminary results. We present a maximum principle for varifolds with bounded first variation, a regularity result for boundaries that minimize the $\Ac$-functional, and a result on isoperimetric profile for small volumes. 

\subsection{Maximum principle for varifolds with $c$-bounded first variation}

We will need the following maximum principle which is essentially due to White \cite[Theorem 5]{White10}. 

\begin{proposition}[Maximum principle for varifolds with $c$-bounded first variation]
\label{P:maximum principle}
Suppose $V\in \V_n(M)$ has $c$-bounded first variation in a open subset $U\subset M$. Let $K\subset U$ be an open subset with compact closure in $U$, such that $\spt(\|V\|)\subset K$, and
\begin{itemize}
\item[(i)] $\partial K$ is smoothly embedded in $M$,
\item[(ii)] the mean curvature of $\partial K$ with respect to the outward pointing normal is greater than $c$.
\end{itemize}
Then $\spt(\|V\|)\cap \partial K=\emptyset$. 
\end{proposition}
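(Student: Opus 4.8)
The plan is to argue by contradiction: suppose $p \in \spt(\|V\|) \cap \partial K$. The idea is that $\partial K$, being smoothly embedded with mean curvature strictly greater than $c$ with respect to the outward normal, acts as a barrier that the varifold cannot touch from the inside, because the $c$-bounded first variation condition is too weak to allow $\|V\|$ to ``push against'' such a strictly mean-convex barrier.

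First I would set up the comparison function. Since $\partial K$ is smoothly embedded near $p$ and has mean curvature $> c$ there, I would foliate a neighborhood of $p$ by the level sets of the signed distance function $d = \dist(\cdot, \partial K)$ (negative inside $K$). For $t$ in a small interval $(-\delta_0, 0]$, the hypersurface $\partial K_t = \{d = t\}$ is smooth and, by continuity of mean curvature and by shrinking $\delta_0$, still has mean curvature $> c$ with respect to the outward normal $\nabla d$; moreover for $t < 0$ small these lie in the interior of $K$. Now consider the function $f = \phi(d)$ for a suitable increasing convex $\phi$, or more directly follow White's approach: one computes $\Delta_V f := \de V(\nabla f)$ (the varifold Laplacian) and uses that on each level set the tangential divergence of $\nabla d$ equals the mean curvature. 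The key computation is that for an appropriate choice of $f$ (e.g. $f = e^{\lambda d}$ with $\lambda$ large, or $\phi$ chosen so that the relevant second-order term dominates), one gets $\de V(\nabla f) \geq (\text{mean curvature term} - c)\int |\nabla f|\, d\mu_V + (\text{positive curvature-of-foliation term}) > 0$ on the region where $\spt\|V\|$ meets the foliation, strictly, using $c$-boundedness $|\de V(X)| \leq c\int |X|\,d\mu_V$ as the upper bound to contradict.

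The contradiction is then extracted by choosing the right test vector field. Let $r > 0$ be small enough that $\sB_r(p) \cap \partial K$ lies in the foliated region, and consider $X = \eta \nabla f$ where $\eta$ is a cutoff supported in $\sB_r(p)$. Since $\spt\|V\| \subset K = \{d \leq 0\}$ and $p \in \partial K \cap \spt\|V\|$, the measure $\|V\|$ concentrates on $\{d \leq 0\}$ and has positive mass arbitrarily close to $\{d = 0\}$. On this set $f$ (chosen increasing in $d$) attains its maximum over $\spt\|V\|$ on $\{d = 0\}$, so the first-variation identity $\de V(\eta \nabla f) = \int \eta\, \de V|_{\text{geom}} + \int \langle \nabla^V \eta, \nabla f\rangle$ combined with the barrier inequality forces $\int_{\sB_r(p)} \eta\,(H_{\partial K_{d}} - c)|\nabla f|\,d\mu_V \leq c\int |\text{error}|$, and making $r \to 0$ the error terms (which are $O(r)\|V\|(\sB_r(p))$ or controlled by the cutoff gradient) become negligible relative to the strictly positive main term, giving $0 < 0$. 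I would cite White's Theorem 5 for the precise mechanics and only indicate the adaptation to the $c$-bounded (rather than stationary) case, where the threshold $c$ on the mean curvature of $\partial K$ is exactly what compensates the first-variation bound.

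The main obstacle I anticipate is the technical bookkeeping near $p$: ensuring the distance function $d$ to $\partial K$ is smooth on a genuine one-sided neighborhood reaching into $K$, verifying uniformly that every nearby level set retains mean curvature $> c$, and handling the fact that $V$ is merely a varifold (no regularity) so all estimates must be phrased via the first-variation functional rather than pointwise on a hypersurface. The cleanest route is really just to invoke \cite[Theorem 5]{White10} essentially verbatim — White proves exactly a maximum principle for varifolds whose first variation is bounded in this generalized sense against mean-convex barriers — and remark that the hypotheses (i) and (ii) here are precisely what that theorem requires, with $c$ playing the role of the bound. So in practice the ``proof'' is a short reduction to White's result, with the foliation/barrier discussion above included only to the extent needed to check that our setup matches his.
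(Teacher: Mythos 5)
Your proposal is correct and takes the same route as the paper: the paper gives no independent proof of Proposition \ref{P:maximum principle}, but simply records it as ``essentially due to White \cite[Theorem 5]{White10},'' which is exactly the reduction you arrive at in your final paragraph. The foliation/barrier discussion you include is a reasonable sketch of White's argument but is not needed beyond checking that hypotheses (i)--(ii) with the threshold $c$ match White's setup.
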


\subsection{Regularity for boundaries which minimize the $\Ac$ functional}

The following result about regularity of boundaries which minimize the $\Ac$ functional can be found in \cite{Mo03}.

\begin{theorem}
\label{T:regularity of Ac minimizers}
Given $\Om\in\C(M)$, $p\in\spt\|\partial\Om\|$, and some small $r>0$, suppose that $\Om\lc \sB_s(p)$ minimizes the $\Ac$-functional: that is, for any other $\La\in\C(M)$ with $\spt\|\La-\Om\|\subset \sB_s(p)$, we have $\Ac(\La)\geq \Ac(\Om)$. Then except for a set of Hausdorff dimension at most $n-7$, $\partial\Om\lc \sB_s(p)$ is a smooth and embedded hypersurface, and is real analytic if the ambient metric on $M$ is real analytic.
\end{theorem}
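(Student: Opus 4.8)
The plan is to reduce the statement to the classical regularity theory for almost minimal boundaries (in the sense of Almgren and Tamanini), for which the conclusion---smoothness away from a set of Hausdorff dimension at most $n-7$, and analyticity in the analytic category---is well established. The key observation is that the $h$-volume term $\int_\Omega h$ is a lower-order perturbation of the perimeter: since $h$ is smooth on the compact manifold $M$, we have $c:=\sup_M|h|<\infty$, and for any competitor $\Lambda\in\mathcal{C}(M)$ with $\spt\|\Lambda-\Omega\|\subset \widetilde B_s(p)$ one estimates $\big|\int_\Lambda h-\int_\Omega h\big|\leq c\,\vol(\widetilde B_s(p))\leq C r^{n+1}$ for $r$ small. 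Combining this with the minimizing hypothesis $\mathcal{A}^h(\Lambda)\geq \mathcal{A}^h(\Omega)$ gives
\[
\mathcal{H}^n(\partial\Omega\lc \widetilde B_s(p)) \leq \mathcal{H}^n(\partial\Lambda\lc \widetilde B_s(p)) + C s^{n+1},
\]
so $\partial\Omega$ is an $(\mathcal{H}^n,\delta)$-minimal set, or equivalently a $\Lambda$-minimizer of perimeter with a gauge $\xi(s)=Cs$.

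First I would make precise the passage to Euclidean coordinates: work in a geodesic chart around $p$, where the metric $g$ is comparable to the Euclidean one, so that $g$-perimeter and Euclidean perimeter are mutually bounded and the excess/gauge structure is preserved up to constants. Then I would invoke Tamanini's regularity theorem for sets whose perimeter is almost minimizing with a Hölder gauge (or equivalently the $\omega$-minimality theory), which yields that the reduced boundary $\partial^*\Omega$ is a $C^{1,\alpha}$ embedded hypersurface in $\widetilde B_s(p)$ and that the singular set $\partial\Omega\setminus\partial^*\Omega$ has Hausdorff dimension at most $n-7$---this dimension bound is exactly the one inherited from the dimensional reduction argument and the nonexistence of singular stable minimal cones in low dimensions, which is where the hypothesis $n+1\le 7$ enters. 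Once $C^{1,\alpha}$ regularity is in hand, the hypersurface satisfies, in the weak sense, the prescribed mean curvature equation $H_{\partial\Omega}=h$ with smooth right-hand side; elliptic Schauder bootstrapping (writing $\partial\Omega$ locally as a graph solving the mean curvature PDE) then promotes it to $C^\infty$, and in the case where $g$ and hence the coefficients and $h$ are real analytic, analytic elliptic regularity (Morrey) upgrades it to a real analytic hypersurface.

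The main obstacle I anticipate is not any single deep new estimate but rather the bookkeeping needed to cite the almost-minimality regularity theory in the correct form on a Riemannian manifold with a genuinely variable (and possibly only smooth, not analytic) ambient metric: one must check that the gauge $Cs$ is admissible (it is, being Hölder with exponent $1$), that the comparison of metrics does not destroy the minimizing property when cutting and pasting competitors, and that the interior estimates are uniform on $\widetilde B_s(p)$. A secondary point requiring care is that $\Omega$ is only assumed to be a local minimizer in the single ball $\widetilde B_s(p)$, so all conclusions are interior to that ball; this is harmless since the statement is local. Since a careful treatment of exactly this reduction already appears in Morgan's work \cite{Mo03}, in the write-up I would keep this argument brief and refer to that reference for the details of the almost-minimality regularity, emphasizing only the lower-order nature of the $h$-volume term and the elliptic bootstrap for smoothness and analyticity.
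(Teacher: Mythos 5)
Your proposal is correct and follows essentially the same approach as the paper: both proofs observe that $\int_\Omega h$ is a lower-order $O(r^{n+1})$ perturbation of the perimeter, so that $\mathcal{A}^h$-minimizers are almost-minimizing boundaries, and then invoke the almost-minimality regularity theory (the paper cites Morgan \cite{Mo03} and verifies his relative condition $A'-A\geq -CrA$ via the isoperimetric inequality, whereas you verify the equivalent additive $\Lambda$-minimizer inequality $\mathcal{H}^n(\partial\Omega)\leq\mathcal{H}^n(\partial\Lambda)+Cs^{n+1}$, but you also refer to \cite{Mo03} for the details). The cosmetic difference between the multiplicative and additive gauge formulations is immaterial here, since standard density bounds make them interchangeable, and both lead to the same $C^{1,\alpha}$ regularity, dimensional reduction to codimension $7$, and elliptic/analytic bootstrap.
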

\begin{proof}

The regularity will follow from the arguments in \cite[Section 3]{Mo03} - in particular \cite[Corollary 3.7, 3.8]{Mo03} - so long as we can verify condition \cite[3.1(2)]{Mo03}. That is, setting $A = \mH^n(\partial\Omega \cap \sB_r(p))$ and $A' = \mH^n(\partial \La\cap \sB_r(p))$, 
it suffices to prove that 
\begin{equation}
\label{eq:morgan312}
A'-A\geq -Cr A, 
\end{equation}
for all $\La \in \C(M)$ as in the statement of the theorem and small enough $r$. Indeed, for such $\Lambda$, since $\Om\lc \sB_r(p)$ minimizes the $\Ac$-functional in $\sB_r(p)$ we have that 
\begin{equation}\label{eq:Ah-minimiser}A'-A \geq - \left| \int_\Lambda h - \int_\Omega h \right| \geq -c \mathcal{H}^{n+1}(\Lambda \triangle \Omega),\end{equation}
where $c=\sup |h|$ and $\Lambda \triangle \Omega$ is the symmetric difference.

In fact this essentially replaces condition \cite[3.1(1)]{Mo03}, and we finish the proof in the same way, by estimating 
$\mathcal{H}^{n+1}(\Lambda \triangle \Omega) \leq 2 \omega_{n+1} r^{n+1},$ where $\omega_n$ is the volume of the unit $n$-ball and, by the isoperimetric inequality, $\mH^{n+1}(\La \triangle \Omega)\leq C \mH^n(\partial(\La \triangle \Omega))^\frac{n+1}{n}\leq  C (A'+A)^\frac{n+1}{n}\nonumber$. Combining these estimates with (\ref{eq:Ah-minimiser}) yields that \[  A'-A\geq -Cr(A'+A), \]
which is clearly equivalent to (\ref{eq:morgan312}) for small $r$. This completes the proof. 
\end{proof}

\subsection{Isoperimetric profiles for small volume}

We will use the following consequence of the fact that the isoperimetric profile is asymptotically Euclidean for small volumes \cite{BM82} (see also \cite[Theorem 3]{Nardulli09}). Note that the result indeed holds for any $\Om\in\C(M)$ by using the regularity theory for isoperimetric domains (c.f. Theorem \ref{T:regularity of Ac minimizers}).
\begin{theorem}
\label{T:Isoperimetric areas}
There exists constants $C_0>0$ and $V_0>0$ depending only on $M$ such that 
\[ \Area(\partial\Om)\geq C_0 \vol(\Om)^{\frac{n}{n+1}}, \text{ whenever $\Om\in\C(M)$ and $\vol(\Om)\leq V_0$.}\]
\end{theorem}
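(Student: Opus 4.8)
The plan is to compare the isoperimetric profile of $M$ with the Euclidean profile for small volumes. Let $I_M(v) = \inf\{\mathcal{H}^n(\partial\Om) : \Om\in\C(M),\ \vol(\Om)=v\}$ denote the isoperimetric profile of $M$, and let $c_{n+1} v^{n/(n+1)}$ be the Euclidean profile (i.e. the area of the boundary of a Euclidean ball of volume $v$, with $c_{n+1} = (n+1)\omega_{n+1}^{1/(n+1)}$). The cited result of B\'erard--Meyer \cite{BM82} (see also \cite[Theorem 3]{Nardulli09}) states that the isoperimetric profile is asymptotically Euclidean, namely $I_M(v)/(c_{n+1} v^{n/(n+1)}) \to 1$ as $v\to 0^+$. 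First I would invoke this to fix $V_0>0$ small enough that $I_M(v) \geq \tfrac{1}{2} c_{n+1} v^{n/(n+1)}$ for all $0 < v \leq V_0$, and then set $C_0 = \tfrac{1}{2} c_{n+1}$; these depend only on $M$ (through $n$ and the rate of convergence in the B\'erard--Meyer estimate, which depends only on the geometry of $M$). The inequality $\Area(\partial\Om) \geq I_M(\vol(\Om)) \geq C_0 \vol(\Om)^{n/(n+1)}$ is then immediate for any $\Om\in\C(M)$ with $\vol(\Om)\leq V_0$, since $I_M(\vol(\Om))$ is by definition a lower bound for the perimeter of any such $\Om$.

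One point that needs a word of care, already flagged in the statement of the theorem, is that the B\'erard--Meyer asymptotics are usually phrased for \emph{isoperimetric regions} (minimizers), whereas the conclusion here is asserted for arbitrary $\Om\in\C(M)$. This is harmless: $I_M(v)$ as defined above is an infimum over all Caccioppoli sets of volume $v$, so the inequality $\Area(\partial\Om)\geq I_M(\vol(\Om))$ holds trivially by definition for every competitor, minimizer or not. The only substantive use of regularity theory (Theorem \ref{T:regularity of Ac minimizers}, or the standard isoperimetric regularity) is to guarantee that the infimum $I_M(v)$ is attained by a region whose boundary is smooth away from a small singular set, so that the B\'erard--Meyer comparison — which is carried out for smooth hypersurfaces — applies to compute $I_M(v)$ itself. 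In other words, regularity is used to identify $I_M$ with the profile studied in \cite{BM82}, and then the general inequality follows formally.

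Since there is genuinely no obstacle here beyond correctly quoting the asymptotic profile result, the only thing to be careful about is the threshold: $V_0$ must be taken small enough both that $I_M(v)$ is realized by a (nearly round) isoperimetric region and that the relative error in the B\'erard--Meyer estimate is at most a fixed fraction, say $\tfrac12$. With these choices the statement follows. For completeness one could alternatively avoid citing the full asymptotic expansion and instead use only the weaker fact that $\liminf_{v\to 0^+} I_M(v)/v^{n/(n+1)} > 0$, which is all that is needed; this in turn follows from a local comparison with Euclidean space via the relative isoperimetric inequality in small geodesic balls, together with a covering argument, but invoking \cite{BM82} is cleaner and is what the paper does.
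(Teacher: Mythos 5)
Your proof is correct and takes essentially the same approach as the paper, which treats this as an immediate consequence of the B\'erard--Meyer/Nardulli asymptotics plus a remark on regularity theory to pass from smooth domains to Caccioppoli sets. Your elaboration of the regularity point (that $I_M$ over Caccioppoli sets is trivially a lower bound for any competitor, with regularity only needed to identify this infimum with the profile studied for smooth hypersurfaces) is exactly the content of the paper's parenthetical note.
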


\section{Stable PMC hypersurfaces}
\label{sec:compactness}

In this section, we establish curvature estimates and the compactness theory for stable hypersurfaces of prescribed mean curvature $h:M\rightarrow \mathbb{R}$. In particular, when $h$ satisfies either assumption $(\dagger)$ or $(\ddagger)$, we obtain good control on the touching set that arises upon taking the limit of embedded stable PMC hypersurfaces. 

\subsection{Stability and curvature estimates}

\begin{definition}
\label{D:stable c-hypersurface}
Let $\Si$ be a smooth, immersed, two-sided hypersurface with unit normal vector $\nu$, and $U\subset M$ an open subset. We say that $\Si$ is a {\em stable $h$-hypersurface} in $U$ if 
\begin{itemize}
\item  {the mean curvature $H$ of $\Si\cap U$ with respect to $\nu$ equals to $h|_\Sigma$; and}
\item $\Rom{2}_\Si(\varphi, \varphi)\geq 0$ { for all $\varphi\in C^\infty(\Si)$ with $\spt{\varphi}\subset \Si\cap U$,} 
where $\Rom{2}_\Si$ is as in (\ref{E: 2nd variation for Ac}).
\end{itemize}
\end{definition}

\begin{definition}
\label{def:comparisonsheets}
Let $\Sigma_i$, $i=1,2$, be connected embedded hypersurfaces in a connected open subset $U\subset M$, with $\partial\Sigma_i\cap U=\emptyset$ and unit normals $\nu_i$. We say that {\em $\Sigma_2$ lies on one side of $\Sigma_1$} if $\Sigma_1$ divides $U$ into two connected components $U_1 \cup U_2 = U\setminus \Sigma_1$, where $\nu_1$ points into $U_1$, and either:
\begin{itemize}
\item $\Sigma_2 \subset \Clos(U_1)$, which we write as $\Sigma_1\leq \Sigma_2$ or that $\Sigma_2$ lies on the positive side of $\Sigma_1$; or
\item $\Sigma_2 \subset \Clos(U_2)$, which we write as $\Sigma_1\geq \Sigma_2$ or that $\Sigma_2$ lies on the negative side of $\Sigma_1$.
\end{itemize}
 \end{definition}

\begin{definition}[Almost embedding]
\label{D:almost embedded boundary}
Let $U\subset M^{n+1}$ be an open subset, and $\Si^n$ be a smooth $n$-dimensional manifold. A smooth immersion $\phi: \Si\rightarrow U$ is said to be an {\em almost embedding} if at any point $p\in\phi(\Si)$ where $\Si$ fails to be embedded, there is a small neighborhood $W\subset U$ of $p$, such that 
\begin{itemize}
\item $\Si\cap \phi^{-1}(W)$ is a disjoint union of connected components $\cup_{i=1}^l \Si_i$;
\item $\phi(\Si_i)$ is an embedding for each $i=1,\cdots,l$; 
\item for each $i$, any other component $\phi(\Si_j)$, $j\neq i$, lies on one side of $\phi(\Si_i)$ in $W$.
\end{itemize}
We will simply denote $\phi(\Si)$ by $\Si$ and denote $\phi(\Si_i)$ by $\Si_i$. The subset of points in $\Si$ where $\Si$ fails to be embedded will be called the {\em touching set}, and denoted by $\mS(\Si)$. We will call $\Si\backslash\mS(\Si)$ the regular set, and denote it by $\mR(\Si)$.
\end{definition}
\begin{remark}
From the definition, the collection of components $\{\Si_i\}$ meet tangentially along $\mS(\Si)$.
\end{remark}

\begin{definition}[Almost embedded $h$-boundaries]
\begin{enumerate}
\item An almost embedded hypersurface $\Si\subset U$ is said to be {\em a boundary} if there is an open subset $\Om\in\C(U)$, such that $\Si$ is equal to the boundary $\partial\Om$ (in $U$) in the sense of currents;

\item The {\em outer unit normal} $\nu_\Si$ of $\Si$ is the choice of the unit normal of $\Si$ which points outside of $\Om$ along the regular part $\mR(\Si)$; 

\item $\Si$ is called {\em a stable $h$-boundary} if $\Si$ is a boundary as well as a stable immersed $h$-hypersurface. 
\end{enumerate}
\end{definition}

We have the following variant of the famous Schoen-Simon-Yau (for $2\leq n\leq 5$) \cite{SSY75} and Schoen-Simon ($n=6$) \cite{SS81} curvature estimates.
\begin{theorem}[Curvature estimates for stable $h$-hypersurfaces]
\label{T:curvature estimates and compactness}
Let $2 \leq n \leq 6$, and $U\subset M$ be an open subset. If $\Si\subset U$ is a smooth, immersed (almost embedded when $n=6$), two-sided, stable $h$-hypersurface in $U$ with $\partial \Si\cap U=\emptyset$, and $\Area(\Si)\leq C$, then there exists $C_1$ depending only on $n, M, c = \sup|h| , C$, such that
\[ |A^\Si|^2 (x) \leq \frac{C_1}{\dist^2_M(x,\partial U)} \quad \text{ for all $x \in \Si$}. \]
Moreover if $\Si_k\subset U$ is a sequence of smooth, immersed (almost embedded when $n=6$), two-sided, stable $h$-hypersurfaces in $U$ with $\partial \Si_k\cap U=\emptyset$ and $\sup_{k} \Area(\Sigma_k) < \infty$, 
then up to a subsequence, $\Sigma_k$ converges locally smoothly (possibly with multiplicity) to some stable $h$-hypersurface $\Sigma_\infty$ in $U$. 
\end{theorem}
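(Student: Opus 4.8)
The plan is to adapt the classical Schoen–Simon–Yau and Schoen–Simon arguments to the PMC setting, where the only essential new feature is that the mean curvature is no longer zero but bounded by $c=\sup|h|$, together with the curvature term $\partial_\nu h$ appearing in the stability inequality \eqref{E: 2nd variation for Ac}. First I would recall the standard setup: combining the stability inequality with a Simons-type identity for $|A^\Si|^2$ on a hypersurface of (non-constant) mean curvature $H = h|_\Si$. The Simons identity now picks up lower-order terms involving $H$, $\nabla H$ and the ambient curvature, but since $\|h\|_{C^1}$ (and in fact $\|h\|_{C^2}$ on the compact manifold $M$) is bounded, all these extra terms are controlled by $C(n,M,c)(1 + |A^\Si|)$ and can be absorbed. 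One then runs the usual $L^p$-iteration (Schoen–Simon–Yau for $2\le n\le 5$, Schoen–Simon for $n=6$, the latter requiring the almost-embedded hypothesis so that the sheets can be handled as stable embedded pieces locally) to obtain the pointwise estimate $|A^\Si|^2(x) \le C_1 \dist_M^{-2}(x,\partial U)$, with $C_1$ depending only on $n$, $M$, $c$ and the area bound $C$. The area bound enters exactly as in the minimal case, to control the measure of $\Si$ in balls via the monotonicity formula for varifolds with $c$-bounded first variation.

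For the compactness statement, given the sequence $\Si_k$ with uniformly bounded area, the curvature estimate gives uniform local $C^2$ bounds on $\Si_k$ away from $\partial U$. Then I would extract, via Arzelà–Ascoli and a diagonal argument over an exhaustion of $U$ by compact sets, a subsequence converging locally in $C^{1,\alpha}$, and then bootstrap using the PMC equation $H = h|_\Si$ (a quasilinear elliptic equation for the local graphs, with smooth coefficients since $h$ is smooth) to upgrade to local smooth convergence, possibly with integer multiplicity. The multiplicity arises precisely because several sheets of $\Si_k$ (or several nearby embedded pieces) may collapse onto the same limit; this is handled exactly as in the minimal/CMC case, and the limit $\Si_\infty$ inherits both the PMC equation $H_{\Si_\infty} = h|_{\Si_\infty}$ and stability by passing \eqref{E: 2nd variation for Ac} to the limit (the terms $\Ric^M(\nu,\nu)$, $|A^{\Si}|^2$, $\partial_\nu h$ all converge in $C^0_{\mathrm{loc}}$). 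One subtlety: when $n = 6$ and we only assume almost embeddedness, the limit should be asserted to be an almost embedded stable $h$-hypersurface; the touching set of the limit is not controlled at this stage — that is deferred to the subsequent unique-continuation lemmas in this section.

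The main obstacle, and the reason this is not a verbatim copy of Schoen–Simon, is the presence of the $\partial_\nu h$ term and the non-constancy of $H$ in the Simons identity. Concretely, one must check that the "error" terms in the iteration — those linear in $|A^\Si|$, and those involving derivatives of $h$ restricted to $\Si$ — genuinely have the right scaling to be absorbed into the stability term, and that the constant $C_1$ depends on $h$ only through $c = \sup|h|$ (plus $M$-dependent quantities like $\|h\|_{C^2}$, which are themselves controlled once $M$ and a bound on $h$ in $C^2$ are fixed; since we are proving an estimate for a \emph{given} $h$, one may simply let $C_1$ depend on $h$, or observe that on compact $M$ one can bound the relevant $h$-derivatives a priori). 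I expect the bookkeeping for the $n=6$ case, where the almost-embedded structure must be used to localize to embedded stable sheets before applying the Schoen–Simon estimate, to be the most delicate point; everything else is a routine adaptation of the classical machinery combined with the monotonicity formula available for $c$-bounded first variation (as used in Proposition \ref{P:maximum principle}).
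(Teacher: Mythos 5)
Your approach is a valid one but it is genuinely different from the route taken in the paper. The paper's proof is a short blowup argument: assuming the pointwise curvature estimate fails, one selects points and scales so that the rescaled surfaces have normalized curvature, and observes that because the mean curvature is $r_k\,h(p_k + r_k\cdot)$ (and all its derivatives scale by positive powers of $r_k$), the blowup limit is a \emph{stable minimal} hypersurface in $\mathbb{R}^{n+1}$; the Bernstein theorems of Schoen--Simon--Yau ($2\le n\le 5$) and Schoen--Simon ($n=6$) then force the limit to be a plane, contradicting the normalization. For $n=6$ the almost-embedded hypothesis is used only to conclude, by the strong maximum principle, that the blowup limit is actually embedded, so that Schoen--Simon applies. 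By contrast, you propose to rerun the Schoen--Simon--Yau/Schoen--Simon machinery \emph{directly} in the PMC setting: carry the $\partial_\nu h$ term in (\ref{E: 2nd variation for Ac}), the inhomogeneous terms in the Simons identity, and the $\nabla H$, $\Delta H$ contributions through the $L^p$ iteration and absorb them. Both routes give the stated estimate, but they have different character. The blowup argument is structurally cleaner because every $h$-dependent term is killed by scaling; this is what lets the paper assert a constant depending essentially only on $c=\sup|h|$ (modulo the standing smoothness of $h$, whose higher derivatives automatically scale away for a fixed $h$). Your direct iteration, as you yourself flag, naturally produces a constant that sees $\|h\|_{C^1}$ or $\|h\|_{C^2}$, so if you want to match the paper's statement you need the remark (which you make) that these are fixed once $h$ is fixed. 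Your handling of $n=6$ --- localize to the embedded sheets and apply Schoen--Simon sheet by sheet --- is a correct substitute for the paper's observation that the blowup limit is embedded. The compactness part of your argument (uniform $C^{2}$ bounds via the curvature estimate, Arzel\`a--Ascoli plus diagonalization, bootstrap via the quasilinear PMC equation, passing (\ref{E: 2nd variation for Ac}) to the limit) is exactly what the paper means by ``follows in the standard way,'' so no issues there.

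One small caution if you pursue the direct iteration seriously: in the Simons identity for a PMC hypersurface there is a genuine second-derivative-of-$H$ term (coming from $\Delta H$ or $\nabla^2 H$ depending on normalization), and $H = h|_\Sigma$ so $\nabla_\Sigma H$ and $\Delta_\Sigma H$ involve both ambient derivatives of $h$ and the second fundamental form of $\Sigma$ through the restriction. The terms that are quadratic in $|A|$ multiplied by derivatives of $h$ do have to be treated carefully --- they are of the same order as the good terms in the stability inequality, so the absorption works only because the coefficients are small at small scales (this is implicitly the same scaling fact the blowup exploits). This is where the bookkeeping is genuinely delicate, and why the blowup route is preferable in practice: it trades the bookkeeping for a single appeal to the Bernstein theorem.
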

\begin{proof}
The compactness statement follows in the standard way from the curvature estimates. The curvature estimates follow from standard blowup arguments together with the Bernstein Theorem \cite[Theorem 2]{SSY75} and \cite[Theorem 3]{SS81}, the key being that the blowup will be a stable minimal hypersurface, and when $n=6$, the blowup of a sequence of almost embedded $h$-hypersurfaces will be embedded by the classical maximum principle for embedded minimal hypersurfaces (c.f. \cite{Colding-Minicozzi}).
\end{proof}


\subsection{Some hypotheses on the zero set}

We consider smooth prescription functions $h:M^{n+1}\rightarrow \mathbb{R}$ satisfying the following property: 
\begin{enumerate}
\item[($\dagger$)] If $\Sigma^n\subset M^{n+1}$ is a smoothly embedded hypersurface and $h|_\Sigma$ vanishes to all orders at $p\in \Sigma$, then there exists $r>0$ for which $\Sigma_0= \{h=0\} \cap B_r(p)$ is a connected, smoothly embedded hypersurface tangent to $\Sigma$ at $p$. Moreover, if the mean curvature of $\Sigma_0$ vanishes to infinite order at any point, then it vanishes identically. 
\end{enumerate}

This property is satisfied by any nonzero analytic function on a real analytic Riemannian manifold: 

\begin{lemma}
Suppose that $M$ is a real analytic manifold with real analytic metric $g$. Then any nonzero analytic function $h:M\rightarrow \mathbb{R}$ satisfies $(\dagger)$. 
\end{lemma}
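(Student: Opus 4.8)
The plan is to verify the two clauses of $(\dagger)$ separately, in both cases exploiting the fact that the zero set of a nonzero real analytic function is a real analytic variety of codimension $\geq 1$, together with unique continuation for analytic functions.

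For the first clause, suppose $\Sigma^n \subset M^{n+1}$ is smoothly embedded and $h|_\Sigma$ vanishes to infinite order at $p \in \Sigma$. First I would choose analytic normal coordinates near $p$ (available since $g$ is analytic) in which $\Sigma$ is a smooth graph over its tangent plane $T_p\Sigma$; composing with this graph, $h|_\Sigma$ corresponds to a smooth function on a neighborhood of $0$ in $\R^n$ vanishing to infinite order at $0$. The key point is that $h$ itself is analytic on $M$, so its restriction to the analytic submanifold $\{h = 0\}$ vanishes identically; but a priori $h|_\Sigma$ need not be analytic since $\Sigma$ is only smooth. Instead, I would argue that since $h$ is analytic and $\nabla h$ also vanishes at $p$ (because $h$ vanishes to all orders along $\Sigma$ forces $dh|_p$ to annihilate $T_p\Sigma$, and one can further show $\partial_\nu h(p) = 0$: otherwise near $p$ the zero set $\{h=0\}$ would be a graph over $T_p\Sigma$ meeting $\Sigma$ only at $p$ to finite order, contradicting infinite-order vanishing of $h|_\Sigma$). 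Actually the cleanest route: infinite-order vanishing of $h|_\Sigma$ at $p$, combined with analyticity of $h$, should force the full $\infty$-jet of $h$ at $p$ to vanish in directions tangent to $\Sigma$ at all orders, and then an argument comparing $\Sigma$ with the analytic variety $\{h=0\}$ shows $p$ lies in the smooth top-dimensional stratum of $\{h = 0\}$ and that $\{h=0\}$ is tangent to $\Sigma$ at $p$. Concretely: let $Z = \{h = 0\}$. Near $p$, $Z$ is an analytic variety; since $h$ is nonzero and vanishes to infinite order on the $n$-dimensional set $\Sigma$ through $p$, the analytic variety $Z$ must have dimension exactly $n$ near $p$ (it cannot be all of $M$), and $p$ must be a smooth point of $Z$ with $T_p Z \supseteq T_p\Sigma$, hence $T_pZ = T_p\Sigma$; shrinking to $B_r(p)$ makes $Z \cap B_r(p) = \Sigma_0$ a connected smooth (indeed analytic) embedded hypersurface tangent to $\Sigma$ at $p$. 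The existence of a nonsmooth point or lower-dimensional component of $Z$ arbitrarily close to $p$ is ruled out because $\Sigma$ (smooth, $n$-dimensional) is forced into $Z$ to infinite order along a whole neighborhood of $p$ in $\Sigma$ by the identity theorem applied to $h$ restricted to analytic curves, so in fact $\Sigma \cap B_r(p) \subseteq Z$ and $\Sigma \cap B_r(p)$ is itself analytic near $p$, whence $\Sigma_0 = \Sigma \cap B_r(p)$.

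For the second clause, I must show that if the mean curvature $H_{\Sigma_0}$ of $\Sigma_0 = \{h = 0\}\cap B_r(p)$ vanishes to infinite order at some point $q$, then $H_{\Sigma_0} \equiv 0$. Here the crucial input is that $\Sigma_0$ is a real analytic hypersurface (being a smooth stratum of the zero set of an analytic function, or equivalently an analytic graph as established above) sitting inside the real analytic manifold $(M,g)$. Consequently its second fundamental form, and hence its mean curvature $H_{\Sigma_0}$, is a real analytic function on $\Sigma_0$. A real analytic function on a connected real analytic manifold that vanishes to infinite order at a single point vanishes identically, by the classical identity theorem for analytic functions (unique continuation). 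So $H_{\Sigma_0} \equiv 0$ on the connected set $\Sigma_0$, which is exactly what $(\dagger)$ demands.

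The main obstacle I anticipate is the first clause — specifically, bridging the gap that $\Sigma$ is only assumed \emph{smooth} while the structural conclusions (connectedness, smooth embeddedness, tangency) need to come from the \emph{analyticity} of $\{h=0\}$. The delicate step is showing that infinite-order contact of the smooth hypersurface $\Sigma$ with the analytic function $h$ at the single point $p$ propagates to a full neighborhood, i.e., that $\Sigma$ near $p$ is actually contained in $\{h=0\}$; one cannot simply invoke the identity theorem on $h|_\Sigma$ since that composite is not obviously analytic. The resolution is to run unique continuation \emph{before} restricting: examine $h$ along analytic rays or use that the $\infty$-jet of $h$ at $p$, as an element of the (analytic, hence determined) Taylor series of $h$, is killed in a full cone of directions, forcing the analytic germ of $h$ at $p$ to factor appropriately and $\{h=0\}$ to be a smooth analytic hypersurface with the prescribed tangency. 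Once past this point everything else is a routine application of the structure theory of analytic varieties and the analytic identity theorem, as sketched above.
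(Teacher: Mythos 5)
Your instincts about the difficulty are right — you correctly identify that $h|_\Sigma$ is not analytic (only smooth) and that one cannot naively apply the identity theorem to it — but the resolution you sketch contains a genuine error. In the middle of your argument for the first clause you assert that ``$\Sigma$ (smooth, $n$-dimensional) is forced into $Z$ to infinite order along a whole neighborhood of $p$ in $\Sigma$,'' and conclude $\Sigma \cap B_r(p) \subseteq Z = \{h=0\}$, so that $\Sigma$ itself is analytic near $p$. This is false. The hypothesis is only that $h|_\Sigma$ vanishes to infinite order at the single point $p$; a smooth function can vanish to all orders at a point without vanishing near it. Concretely, on $\mathbb{R}^2$ take $h(x,y)=y$ (analytic) and $\Sigma$ the graph of the classical flat function $e^{-1/x^2}$: then $h|_\Sigma$ vanishes to infinite order at the origin but $\Sigma \not\subset \{h=0\}$ on any punctured neighborhood, and $\Sigma$ has no local analytic continuation at all. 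So the asserted containment, and the downstream claims that $p$ is a smooth top-dimensional stratum of $Z$ and that $\Sigma_0 = \Sigma\cap B_r(p)$, do not follow.

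The key device the paper uses, and which your proposal is missing, is to \emph{replace} $\Sigma$ by a real analytic hypersurface $\Sigma_0$ that agrees with $\Sigma$ to infinite order at $p$ (not equal to $\Sigma$, and in particular not obtained by showing $\Sigma$ is itself analytic). Since $h$ and $\Sigma_0$ are both analytic, $h|_{\Sigma_0}$ is analytic; since $\Sigma_0$ matches $\Sigma$ to infinite order at $p$, the Taylor series of $h|_{\Sigma_0}$ at $p$ agrees with that of $h|_\Sigma$ and hence vanishes; by the identity theorem $h|_{\Sigma_0}\equiv 0$, so $\Sigma_0 \subset \{h=0\}$ and (shrinking $r$) $\Sigma_0\cap B_r(p) = \{h=0\}\cap B_r(p)$. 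This sidesteps the non-analyticity of $\Sigma$ entirely. Your treatment of the second clause (analyticity of $H_{\Sigma_0}$ plus unique continuation for analytic functions on the connected $\Sigma_0$) is correct and matches the paper.
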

\begin{proof}
Let $p, \Sigma$ be as in $(\dagger)$. 

Now suppose $h$ is a nonzero analytic function. Near $p$, there exists a real analytic hypersurface $\Sigma_0$ that agrees with $\Sigma$ to infinite order at $p$. Then $h|_{\Sigma_0}$ is analytic and vanishes to all orders at $p$, so $h$ must vanish identically on $\Sigma_0$, and we can take $r$ small enough so that $\{h=0\}\cap B_r(p) = \Sigma_0\cap B_r(p)$. Similarly, $H_{\Sigma_0}$ is an analytic function on $\Sigma_0$, so if it vanishes to all orders at a point then it must vanish identically on $\Sigma_0$. 
\end{proof}

If $M$ is only smooth, we can still show that functions satisfying $(\dagger)$ are generic: 

\begin{proposition}
Let $(M,g)$ be a smooth closed Riemannian manifold. Consider the set $\mathcal{S}$ of smooth Morse functions $h$ such that the zero set $\{h=0\}=:\Sigma_0$ is a smooth closed hypersurface, and the mean curvature of ${\Sigma_0}$ vanishes to at most finite order. 

Then $\mathcal{S}$ is open and dense in $C^\infty(M)$. Moreover each function in $\mathcal{S}$ satisfies $(\dagger)$. 
\end{proposition}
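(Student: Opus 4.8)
The plan is to establish this proposition in three logically separate pieces: openness of $\mathcal{S}$, density of $\mathcal{S}$, and the implication $\mathcal{S}\subset\{h\text{ satisfies }(\dagger)\}$. The last of these is the quickest: if $h\in\mathcal{S}$ and $\Sigma^n$ is a smoothly embedded hypersurface with $h|_\Sigma$ vanishing to all orders at some $p\in\Sigma$, then in particular $h(p)=0$, so $p\in\Sigma_0$, and since $\Sigma_0$ is already assumed to be a smooth closed hypersurface we take $r>0$ small enough that $\{h=0\}\cap B_r(p)=\Sigma_0\cap B_r(p)$ is connected. That $\nabla h(p)=0$ forces $p$ to be a critical point of $h$, but $h$ is Morse and $p$ lies on the zero-level hypersurface; I would note that the vanishing of $h|_\Sigma$ to infinite order at $p$, combined with $dh(p)$ being nonzero \emph{off} $\Sigma_0$ (which holds wherever $\Sigma_0$ is a regular level set) actually forces $T_p\Sigma = T_p\Sigma_0$, i.e.\ $\Sigma$ is tangent to $\Sigma_0$ at $p$ --- here one expands $h$ along $\Sigma$ near $p$ and uses that the Hessian of the distance-to-$\Sigma_0$ type function is controlled. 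The finite-order mean curvature vanishing of $\Sigma_0$ is exactly the second clause of $(\dagger)$. So $(\dagger)$ holds.

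For density, I would use the mean curvature flow as the paper's introduction advertises. Starting from an arbitrary $h_0\in C^\infty(M)$, first perturb slightly so that $h$ is Morse and $0$ is a regular value, which is generic by Sard and the standard genericity of Morse functions; then $\Sigma_0=\{h=0\}$ is automatically a smooth closed hypersurface (possibly empty, in which case there is nothing more to check). The remaining issue is to arrange that the mean curvature of $\Sigma_0$ vanishes to only finite order. The idea is: run mean curvature flow on $\Sigma_0$ for a short time, or rather use a normal graph perturbation of $\Sigma_0$ in the direction of its mean curvature vector; if $H_{\Sigma_0}$ vanished to infinite order on an open set it would vanish identically (by the clause already available, or directly since a minimal-on-an-open-set hypersurface that is connected is minimal), and a generic small normal perturbation breaks any such degeneracy. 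More carefully, I would perturb $h$ so that near any point where $H_{\Sigma_0}$ might vanish to infinite order, $\Sigma_0$ is replaced by a nearby hypersurface whose mean curvature does not; since real analyticity is unavailable one argues by a transversality/bumping argument in a finite-dimensional family of perturbations of $h$ supported near the suspect points, using that the set of hypersurfaces with somewhere-infinite-order-vanishing mean curvature is ``thin.''

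For openness, suppose $h\in\mathcal{S}$. Being Morse is an open condition in $C^\infty(M)$, and having $0$ as a regular value is open, so nearby $\tilde h$ still has $\{\tilde h=0\}=:\tilde\Sigma_0$ a smooth closed hypersurface, converging smoothly to $\Sigma_0$ as $\tilde h\to h$. It remains to see that the mean curvature of $\tilde\Sigma_0$ still vanishes to finite order; here I would use that finite-order vanishing of $H_{\Sigma_0}$ at every point of the compact $\Sigma_0$ gives, by a compactness argument, a uniform order $N$ and a uniform lower bound on the $N$-jet of $H$ (in a suitable sense: at each point some derivative of $H$ up to order $N$ is bounded below), and this lower bound persists under small $C^\infty$ perturbation. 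Some care is needed because ``vanishes to finite order at each point'' does not a priori give a global finite order without compactness plus semicontinuity; I would make this precise by covering $\Sigma_0$ by finitely many charts and using that the map $h\mapsto (\text{jets of }H_{\{h=0\}})$ is continuous.

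The main obstacle I expect is the density step, specifically producing, for a general \emph{smooth} (not analytic) $h$, a nearby function whose zero set has mean curvature vanishing to only finite order everywhere. The clean analytic argument is unavailable, so one must run a genuine perturbation/transversality argument --- this is presumably where the ``neat argument'' via mean curvature flow enters, letting one flow away the bad set or use the flow's smoothing/analyticity-type properties; controlling that the perturbation can be made arbitrarily $C^\infty$-small while genuinely destroying all infinite-order vanishing of $H$ is the delicate point.
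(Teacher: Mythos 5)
Your overall architecture (openness, density via mean curvature flow, verification of $(\dagger)$) matches the paper's, and the openness and $(\dagger)$ parts are essentially correct: for openness, your observation that one needs compactness to upgrade pointwise finite order to a uniform order $N$ with a lower jet bound is the right refinement of the terse phrase ``any bound for the order of vanishing of $H$ will be preserved'' in the paper; for $(\dagger)$, your argument that $dh(p)\neq 0$ together with $d(h|_\Sigma)(p)=0$ forces $T_p\Sigma\subset\ker dh(p)=T_p\Sigma_0$, hence equality by dimension, is the same as the paper's.

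The genuine gap is in the density step, and you flag it yourself. Your fallback plan -- a ``transversality/bumping argument in a finite-dimensional family of perturbations,'' or a first-order normal graph in the mean curvature direction -- does not by itself control infinite-order vanishing of the mean curvature of the new zero set: a finite-dimensional family of bumps does not give you a Baire-category argument for a condition (``$H$ vanishes to at most finite order at \emph{every} point'') whose failure set is not a closed submanifold of finite codimension, and a linear normal graph $tH\nu$ lacks the structure needed to conclude anything about the order of vanishing of $H$ at time $t$. The paper's actual mechanism, which you gesture toward but do not pin down, is Proposition~\ref{lem:mcf}: if $\Sigma_t$ evolves by smooth mean curvature flow and $H_{\Sigma_t}$ vanishes to infinite order at some $p\in\Sigma_t$ with $t>0$, then $\Sigma_0$ was a minimal hypersurface. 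This is proved by applying spacelike strong unique continuation to the parabolic PDE $(\partial_t-\Delta_g)H=(|A|^2+\mathrm{Ric}(\nu,\nu))H$ (forcing $H\equiv 0$ at time $t$) and then backwards uniqueness for the flow. Thus after a preliminary perturbation of $h$ so that no component of $\Sigma_0$ is minimal, flowing the zero set for a short positive time automatically destroys all infinite-order vanishing of $H$. The remaining work in the paper, which you also do not address, is to realize $\Sigma_{0,t}$ as the zero set of a $C^\infty$-small perturbation $h^t$ of $h$; the paper gives two constructions (pull back $h$ by the flow of a compactly supported extension of the mean curvature vector, or interpolate with the level set flow using a cutoff between $\mathcal{N}_{\epsilon/4}$ and $\mathcal{N}_{\epsilon/3}$), and both take a bit of care to ensure the perturbed function is Morse with the correct zero set. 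So: same strategy, but the crucial unique-continuation input that makes the mean curvature flow step actually work is missing from your proposal.
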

\begin{proof}
Recall that the set of Morse functions is open and dense in $C^\infty(M)$. In fact, since nondegenerate critical points are isolated, the singular set $\{h=\nabla h=0\}$ of any Morse function $h$ has only finitely many points. It follows that the set $\mathcal{S}_0$ of Morse functions $h$ with empty singular set $\{h=\nabla h=0\} =\emptyset$ is also open and dense. By the implicit function theorem, having empty singular set is equivalent to the condition that $\Sigma_0 = \{h=0\}$ is a smooth, {embedded}, closed hypersurface. 

First we address the openness. Take $h\in \mathcal{S}$. Near $\Sigma_0$, the gradient is bounded below, $|\nabla h|\geq \delta>0$, so any small perturbation of $h$ will have zero set given by a smooth hypersurface close to $\Sigma_0$. 
Moreover, since the mean curvature of the level sets of $h$ are given by $H=\Div\left(\frac{\nabla h}{|\nabla h|}\right)$, any bound for the order of vanishing of $H$ will be preserved under smooth perturbation. 

To show that $\mathcal{S}$ is dense, it suffices to show, given $h\in \mathcal{S}_0$, that we can construct a smooth perturbation of $h$ whose zero set does not have mean curvature vanishing to infinite order. First, it is clear that we may perturb $h$ so that no component of $\Sigma_0$ has identically vanishing mean curvature, so we assume this without loss of generality. 
The idea is then to run the mean curvature flow $\Sigma_{0,t}$ starting from the zero set $\Sigma_0$, and construct a smooth deformation $h^t$ of $h$ which has the flowing hypersurface $\Sigma_{0,t}$ as its zero set. 

One way to perform this procedure is to extend the mean curvature vector of $\Sigma_{0,t}$, for a short time interval $[0,t_0]$, to a smooth time-dependent vector field $X_t$ supported in a fixed neighbourhood of $\Sigma_0$. We may then take $h^t$ to be the pullback of $h$ under the flow of $X_t$. Alternatively, we can get somewhat more control by running mean curvature flow on all nearby level sets, and interpolating as follows:

Denote $\mathcal{N}_\epsilon = \{|h| <\epsilon\} $. Since $|\nabla h|$ is nonzero on $\Sigma_0$, for sufficiently small $|s| <\epsilon$ the level sets $\Sigma_s=\{h=s\}$ are smooth, closed, {embedded}, hypersurfaces. Fix a smooth function $0\leq \phi\leq 1$ that is equal to 1 outside $\mathcal{N}_{\epsilon/3}$, and equal to 0 inside $\mathcal{N}_{\epsilon/4}$. 

We claim that for a short time interval $[0,t_0]$, the level set flow on $\mathcal{N}_{\epsilon/2}$ starting from $h$ admits a smooth solution. That is, there exists a smooth family of smooth functions $\tilde{h}^t$, such that the level sets $\Sigma_{s,t}:=\{\tilde{h}^t = s\}$ are given by the classical mean curvature flow starting from $\Sigma_s$. For small enough $\epsilon$ and $t_0$, we also have $\{|\tilde{h}^t| <\epsilon/5\}\subset \mathcal{N}_{\epsilon/4}$; in particular $h$ and $\tilde{h}^t$ have the same sign at each point of $\mathcal{N}_{\epsilon/3} \setminus \mathcal{N}_{\epsilon/4}$. We defer the proof to Section \ref{sec:mcf}. With the functions $\tilde{h}^t$ in hand we may define $h^t = \phi h + (1-\phi) \tilde{h}^t$. 

By either construction, we have a smooth family of smooth Morse functions $h^t$, with zero set $\{h^t=0\} = \Sigma_{0,t} $ as desired. We claim that at positive times the mean curvature of a hypersurface flowing by mean curvature cannot vanish to infinite order, unless it was a minimal hypersurface to begin with. Again the proof is deferred to Section \ref{sec:mcf}. But we already ensured that no component of $\Sigma_0$ was minimal, so by the claim, the mean curvature of $\Sigma_{0,t}$ can only vanish to at most finite order as desired. 

Finally, if $h\in\mathcal{S}$ and $p, \Sigma$ are as in $(\dagger)$, then $p\in \Sigma_0$ and we need only check that $T_p\Sigma_0 = T_p\Sigma$. But this is clear as otherwise the vanishing of $h|_{\Sigma}$ to all orders would force $p$ to be a degenerate critical point of $h$ on $M$. 
\end{proof}

We can also handle smooth prescription functions with small zero set. In particular we can consider smooth functions $h:M^{n+1}\rightarrow \mathbb{R}$ satisfying the property:
\begin{enumerate}
\item[($\ddagger$)] The zero set $\{h=0\}$ is contained in a countable union of connected, smoothly embedded $(n-1)$-dimensional submanifolds. 
\end{enumerate}

\subsection{Some results on mean curvature flow}
\label{sec:mcf}

Here we prove two results on mean curvature flow that were needed to show that the set $\mathcal{S}$ was a generic set. First we give a smooth short-time existence result for level set flows when everything is uniformly smooth.

\begin{proposition}
Suppose that $h:M\rightarrow \mathbb{R}$ is a smooth function and that $0<\delta \leq |\nabla h| \leq \delta^{-1}$ on $\mathcal{N}_{\epsilon}=\{|h| < \epsilon\}$. Further suppose that the level sets $\Sigma_s = \{h=s\}$ are smooth, closed, {embedded} hypersurfaces, which then have uniformly bounded curvature. Then there exists $t_0>0$ and a smooth family of smooth functions $\tilde{h}^t : \mathcal{N}_{\epsilon/2} \rightarrow \mathbb{R}$, $t\in [0,t_0]$, such that for each fixed $|s|\leq \epsilon/2$, the level sets $\Sigma_{s,t} = \{\tilde{h}^t=s\}$ are given by the classical mean curvature flow of $\Sigma_s$. 
\end{proposition}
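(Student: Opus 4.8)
The plan is to run the classical mean curvature flow on each level set $\Sigma_s=\{h=s\}$ separately, and then to reassemble the flowed sheets into the graph of a single smooth function $\tilde h^t$; the key point is that for a uniform short time the flowed sheets still foliate a neighborhood of $\Sigma_0$ smoothly. This is really the only available route: near the level sets, where $\nabla u\ne 0$, the level set flow PDE $\partial_t u=\Delta u-\Hess u(\nabla u,\nabla u)/|\nabla u|^2$ is equivalent to exactly such a family of genuinely parabolic graph equations, so one may as well work with the latter directly. The ingredients are short-time existence for mean curvature flow, its smooth dependence on initial data, the avoidance principle, and the inverse function theorem with parameters.

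First I would record the uniform bounded geometry of the foliation $\{\Sigma_s\}_{|s|<\epsilon}$. Since $M$ is compact, $h$ is smooth and $\delta\le|\nabla h|\le\delta^{-1}$ on $\mathcal{N}_\epsilon$, one has uniform bounds on $\|h\|_{C^k(\{|h|\le 3\epsilon/4\})}$ for every $k$; combined with $A^{\Sigma_s}=|\nabla h|^{-1}\Hess h|_{T\Sigma_s}$ (and the Gauss and Codazzi equations, with the ambient geometry fixed) this gives uniform bounds on $|A^{\Sigma_s}|$ and all of its covariant derivatives, uniformly over $|s|\le 3\epsilon/4$. Integrating the smooth vector field $\nabla h/|\nabla h|^2$ on $\mathcal{N}_\epsilon$ — along which $h$ increases at unit rate — also produces a diffeomorphism $\Phi_0:(-\tfrac{3\epsilon}{4},\tfrac{3\epsilon}{4})\times\Sigma_0\to\{|h|<\tfrac{3\epsilon}{4}\}$ with $h\circ\Phi_0(s,\cdot)=s$ and $\Phi_0(0,\cdot)=\mathrm{id}_{\Sigma_0}$, depending smoothly on all arguments.

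Next, for each $|s|<\tfrac{3\epsilon}{4}$ let $F_s(\cdot,t)$ denote the classical mean curvature flow with $F_s(\cdot,0)=\Phi_0(s,\cdot):\Sigma_0\to M$. Short-time existence is standard (DeTurck trick, or writing each flowing sheet as a normal graph over $\Sigma_s$ and solving the resulting quasilinear parabolic equation in parabolic Hölder spaces), and the maximal existence time is bounded below in terms of $\sup|A^{\Sigma_s}|$ and the fixed geometry of $M$; by the previous step there is then a uniform $t_0>0$ with $F_s$ defined on $\Sigma_0\times[0,t_0]$ for every such $s$. In the graph formulation the flow depends smoothly on the parameter $s$, so $(s,p,t)\mapsto F_s(p,t)$ is jointly smooth. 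Finally, for distinct $s$ the sheets $F_s(\Sigma_0,t)$ are disjoint at $t=0$ and hence stay disjoint for all $t\in[0,t_0]$ by the avoidance principle for mean curvature flow.

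Finally I would reassemble. Consider $\Phi:(s,p,t)\mapsto(F_s(p,t),t)$, which is smooth and reduces at $t=0$ to $(\Phi_0,0)$, a diffeomorphism onto $\{|h|<\tfrac{3\epsilon}{4}\}\times\{0\}$; its differential is invertible there (block-triangular, with invertible $(s,p)$-block $D\Phi_0$ and $t$-block $1$), so after shrinking $t_0$ the avoidance-principle disjointness upgrades $\Phi$ to a diffeomorphism from $(-\tfrac{3\epsilon}{4},\tfrac{3\epsilon}{4})\times\Sigma_0\times[0,t_0]$ onto an open neighborhood of $\overline{\mathcal{N}_{\epsilon/2}}\times[0,t_0]$. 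Writing $\Phi^{-1}(x,t)=(\sigma(x,t),\cdot\,,t)$ and setting $\tilde h^t:=\sigma(\cdot,t)\big|_{\mathcal{N}_{\epsilon/2}}$, one gets a smooth family of smooth functions with $\tilde h^0=h|_{\mathcal{N}_{\epsilon/2}}$ (joint smoothness in $(x,t)$ again from the inverse function theorem) whose level set $\{\tilde h^t=s\}$ is, for each $|s|<\epsilon/2$, the part inside $\mathcal{N}_{\epsilon/2}$ of the classical mean curvature flow $F_s(\Sigma_0,t)$ of $\Sigma_s$; since the flow moves at uniformly bounded speed this is the entire flow once $t_0$ is small relative to $\tfrac{\epsilon}{2}-|s|$, which is all that is used in the application. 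The hard part will be the uniformity: arranging that one $t_0>0$ works simultaneously for the whole one-parameter family of sheets — this rests on the bounded geometry of the foliation and on the existence time of mean curvature flow being controlled by $\sup|A|$ — and that the flowed sheets still fill out a neighborhood of $\mathcal{N}_{\epsilon/2}$ without colliding, for which the avoidance principle is the clean tool.
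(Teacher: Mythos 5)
Your proposal is correct and its core strategy—run classical mean curvature flow on each level set, establish joint smooth dependence on the parameter, check that the flowed sheets stay disjoint by avoidance, and then invert the resulting foliating map to produce $\tilde h^t$—is essentially the same as the paper's. There are, however, two technical differences worth noting. The paper first invokes the Evans--Spruck level set flow theory to produce a unique weak solution $\tilde h^t$ of the degenerate level set PDE, then identifies its level sets with the classical flows, and finally uses the Evans--Spruck distance-monotonicity estimate $d(\Sigma_{s_1,t},\Sigma_{s_2,t})\ge d(\Sigma_{s_1},\Sigma_{s_2})$ to conclude that the flow-out map $F^t$ has uniformly nonsingular differential; the inverse function theorem then upgrades $F^t$ to a diffeomorphism. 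You bypass the level set flow machinery entirely: you build the map $\Phi(s,p,t)=(F_s(p,t),t)$ directly, observe that its differential is block-triangular and invertible at $t=0$, and appeal to compactness and continuity to propagate invertibility for $t\in[0,t_0]$ after shrinking $t_0$, with the avoidance principle supplying global injectivity. Your route is more elementary and self-contained, it cleanly separates the uniform short-time existence input (bounded geometry of the foliation controlling $\sup|A^{\Sigma_s}|$ uniformly) from the inversion step, and it avoids the Evans--Spruck distance estimate—which is stated there for Euclidean ambient space, while the present setting is a closed Riemannian manifold. The trade-off is that your nondegeneracy bound is soft (by continuity and compactness, possibly shrinking $t_0$) rather than quantitative; the paper's distance estimate, where available, gives an explicit lower bound on $dF^t$. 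One minor caveat: your phrase ``once $t_0$ is small relative to $\tfrac{\epsilon}{2}-|s|$'' is not a uniform condition as $|s|\to\epsilon/2$. This is harmless because the bounded-speed estimate $|\partial_t F_s|\le\sup|A^{\Sigma_s}|$ is uniform in $s$, so a single $t_0$ makes the image of $\Phi(\cdot,\cdot,t)$ contain $\mathcal{N}_{\epsilon/2}$ for all $t\in[0,t_0]$, and for the application only the $s=0$ level set is used anyway—but it is worth stating the uniformity cleanly.
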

\begin{proof}
First, it follows from Evans-Spruck \cite{ES91} that there is a unique weak solution $\tilde{h}^t$ of the level set flow with initial data $h$. By short-time existence and continuous dependence on the smooth initial data for the mean curvature flow, there exists a short time interval $t_0>0$ such that for any $|s| \leq \epsilon$, the mean curvature flow $\Sigma_{s,t}$ starting from $\Sigma_s$ exists and is smooth for $t\in [0,t_0]$. 

Therefore by \cite{ES91} again, the level sets $\{\tilde{h}^t=s\}$ must coincide with the classical flows $\Sigma_{s,t}$. In particular, for $t\in[0,t_0]$, the mean curvature flow defines a smooth family of smooth, surjective maps $F^t : \mathcal{N}_{\epsilon}\rightarrow \mathcal{N}^t_{\epsilon} $, where $\mathcal{N}^t_{\epsilon} := \{ |\tilde{h}^t| < \epsilon\}$. We claim that the $F^t$ are in fact a smooth family of smooth \textit{diffeomorphisms}. 

By the avoidance principle for mean curvature flow, each $F^t$ is certainly injective. In fact, by \cite{ES91} once more, the distance between distinct level sets is bounded by ${d(\Sigma_{s_1,t} , \Sigma_{s_2,t})}\geq {d(\Sigma_{s_1},\Sigma_{s_2})}$. This implies that $dF^t$ is uniformly nonsingular and the inverse function theorem then implies the claim. 

By taking $t_0$ smaller if needed we can ensure that $\mathcal{N}^t_\epsilon \supset \mathcal{N}_{\epsilon/2}$. Then $\tilde{h}^t$ is well-defined on $\mathcal{N}_{\epsilon/2}$ and satisfies $\tilde{h}^t = h\circ (F^t)^{-1}$, which completes the proof. 
\end{proof}

Second, we establish that at positive times, the mean curvature on a hypersurface flowing by mean curvature cannot vanish to infinite order unless it is minimal. 

\begin{proposition}
\label{lem:mcf}
Let $\{\Sigma^n_t\}_{0\leq t \leq t_0}$ be a smooth mean curvature flow of closed connected hypersurfaces in $M^{n+1}$. If $H_{\Sigma_t}$ vanishes to all orders at $p\in \Sigma_t$ for some $t>0$, then $\Sigma_0$ must have been a minimal hypersurface. 
\end{proposition}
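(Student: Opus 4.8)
The plan is to combine the parabolic evolution equation for the mean curvature along the flow with two classical unique continuation properties for parabolic equations. Along any smooth mean curvature flow $\{\Sigma_t\}$ in $M^{n+1}$, the mean curvature satisfies
\[
\partial_t H = \Delta_{\Sigma_t} H + \big(|A^{\Sigma_t}|^2 + \Ric^M(\nu,\nu)\big)\,H,
\]
where $\nu$ denotes the unit normal and $A^{\Sigma_t}$ the second fundamental form; since the flow is smooth on the compact interval $[0,t_0]$, every coefficient appearing here is smooth and uniformly bounded. First I would pull the flow back to the fixed closed manifold $\Sigma:=\Sigma_0$ using the flow diffeomorphisms $\phi_t\colon\Sigma_0\to\Sigma_t$, so that $\widetilde{H}_t:=H_{\Sigma_t}\circ\phi_t$ solves a uniformly parabolic scalar equation $\partial_t\widetilde{H}=\Delta_{g_t}\widetilde{H}+q_t\widetilde{H}$ on $\Sigma\times[0,t_0]$, where $\{g_t\}$ is a smooth family of metrics and $q_t$ is smooth. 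The hypothesis that $H_{\Sigma_{t_1}}$ vanishes to all orders at some $p\in\Sigma_{t_1}$ (for a time $t_1\in(0,t_0]$) then says precisely that $\widetilde{H}_{t_1}$ vanishes to infinite order at the point of $\Sigma$ corresponding to $p$.

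Next I would apply the \emph{space-like} strong unique continuation property for parabolic equations: a solution of such an equation on a connected domain that vanishes to infinite order at one interior point of a single time slice must vanish identically on that slice (this is, e.g., the main result of C.-L. Lin; see also work of Poon and of Escauriaza--Fern\'andez--Vega). At time $t_1$ this yields $\widetilde{H}_{t_1}\equiv 0$, i.e.\ $\Sigma_{t_1}$ is minimal.

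To conclude I would propagate this backward in time: $\widetilde{H}$ solves a uniformly parabolic equation with smooth coefficients on $\Sigma\times[0,t_1]$ and vanishes identically at the final time $t_1$, so backward uniqueness for parabolic equations (log-convexity of $t\mapsto\|\widetilde{H}_t\|_{L^2(\Sigma,g_t)}$, or the Carleman estimates of Lees--Protter / Agmon--Nirenberg) forces $\widetilde{H}\equiv 0$ on all of $\Sigma\times[0,t_1]$. In particular $H_{\Sigma_0}\equiv 0$, so $\Sigma_0$ is minimal. (Equivalently, once $\Sigma_{t_1}$ is known to be minimal one could instead compare the flow with the static solution $t\mapsto\Sigma_{t_1}$ and invoke backward uniqueness for the mean curvature flow itself.)

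The only delicate point is organisational rather than analytic: one must check that the pulled-back equation is genuinely uniformly parabolic with smooth coefficients on the compact space-time $\Sigma\times[0,t_0]$ — which is immediate from smoothness of the flow — and quote the space-like unique continuation and backward uniqueness theorems in a form valid on a closed manifold. All of the real work is contained in those cited results; no new estimate is needed.
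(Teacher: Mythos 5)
Your proof is correct and follows essentially the same route as the paper's: both use the parabolic evolution equation for $H$, spacelike strong unique continuation to get that the time-$t_1$ slice is minimal, and then a backward uniqueness argument to propagate minimality back to $t=0$. The one point where you deviate is the final step — you primarily invoke backward uniqueness for the \emph{linear} scalar parabolic equation satisfied by the pulled-back mean curvature $\widetilde{H}$ (via log-convexity or Carleman estimates), whereas the paper instead cites backward uniqueness for the mean curvature flow \emph{system} (Kotschwar, Lu--Martin), comparing $\Sigma_t$ with the static flow $\Sigma_{t_1}$. You correctly note this alternative yourself. Both are valid; the scalar route is more elementary and self-contained, at the small cost of checking that the time-dependent Laplacian $\Delta_{g_t}$ (and the evolving volume form in the $L^2$ norm) only contribute controllable lower-order terms to the log-convexity computation, which is routine on a closed manifold when the flow is smooth on a compact time interval.
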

\begin{proof}
For hypersurfaces under mean curvature flow, the mean curvature satisfies the parabolic PDE $(\partial_t - \lap_g) H = (|A|^2+\Ric(\nu,\nu)) H$, where $\lap_g$ is the hypersurface Laplacian. Since the flow is smooth on $[0,t_0]$, we have $|A|^2+|\Ric(\nu,\nu)|\leq C < \infty$. The conclusion follows from the spacelike strong unique continuation principle for parabolic PDE (see for instance \cite[Corollary 4.2.7]{vessella} or \cite{fernandez03, AV04}) and backwards uniqueness for mean curvature flow \cite{Ko16, LM17}.   
\end{proof}

\subsection{Estimates for the touching set}

Let $h:M^{n+1}\rightarrow \mathbb{R}$ be a smooth function. Suppose that $\Sigma^n_1, \Sigma^n_2$ are connected smoothly embedded hypersurfaces of prescribed mean curvature $h$ in a connected open subset $U\subset M$, which lie to one side of one another. We would like to show that for certain classes of prescription functions $h$, it follows that either $\Sigma_1 = \Sigma_2$ or the touching set $\Sigma_1\cap \Sigma_2$ is $(n-1)$-rectifiable. 

This section is devoted to proving the following theorem:

\begin{theorem}
\label{thm:touchingset}
Suppose that $h:M^{n+1}\rightarrow \mathbb{R}$ satisfies either property $(\dagger)$ or $(\ddagger)$.

Let $\Sigma_1, \Sigma_2$ be connected smoothly embedded hypersurfaces of prescribed mean curvature $h$ in a connected open subset $U\subset M$, which lie to one side of one another. Then either $\Sigma_1 = \Sigma_2$, or the touching set $\Sigma_1 \cap \Sigma_2$ is contained in a countable union of connected, smoothly embedded $(n-1)$-dimensional submanifolds. 
\end{theorem}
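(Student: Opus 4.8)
The plan is to reduce everything to the behaviour of the difference function $w = u_1 - u_2$, where locally $\Sigma_1, \Sigma_2$ are written as normal graphs $u_1, u_2$ over a common reference hypersurface; the one-sided hypothesis forces $w$ to have a sign. The key structural fact is that $w$ satisfies a second-order elliptic \emph{differential inequality} of the form $|\Delta w + b\cdot \nabla w| \leq C(|w| + |\nabla w|)$ (the mean curvature operator is quasilinear, so after subtracting the two PMC equations the zeroth-order $h$-terms and the curvature terms combine into such a bound), and moreover $w$ vanishes on the touching set $T := \Sigma_1 \cap \Sigma_2$. I would first observe that the strong maximum principle for such inequalities gives a dichotomy at each point of $T$: either $w$ vanishes to finite order at $p\in T$, or $w \equiv 0$ near $p$. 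Define $T_{\mathrm{inf}} = \{p\in T : w \text{ vanishes to infinite order at } p\}$; this set is open in $T$ (in the subspace topology on $\Sigma_1$, say), and by unique continuation for elliptic inequalities (the Aronszajn/Garofalo--Lin type result) it is also closed, so by connectedness of $U$ — and hence of each component on which $\Sigma_1 = \Sigma_2$ — either $T_{\mathrm{inf}}$ is empty or $\Sigma_1 = \Sigma_2$ on that component. So we are reduced to the case where $w$ vanishes to finite order at every point of $T$.

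On the finite-order part, I would use the standard stratification/structure theory for nodal sets of solutions to elliptic equations vanishing to finite order (Hardt--Simon, Han--Lin): the zero set of such a $w$ decomposes into a relatively open smooth $(n-1)$-dimensional portion together with a closed set of Hausdorff dimension at most $n-2$, and in fact the whole nodal set is contained in a countable union of smooth $(n-1)$-dimensional submanifolds. Since $T \subset \{w=0\}$, this gives exactly the asserted conclusion. The point where hypotheses $(\dagger)$ and $(\ddagger)$ enter is precisely in handling what happens when $w$ \emph{does} vanish to infinite order somewhere — but note that the above argument already shows this can only happen when $\Sigma_1 = \Sigma_2$ \emph{on a subdomain}; the subtlety is that $\Sigma_1$ and $\Sigma_2$ could still differ elsewhere in $U$ while sharing an open piece. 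Under $(\ddagger)$ the zero set of $h$ has dimension $\leq n-1$, so $T \subset \{h=0\}$ already forces $T$ to lie in a countable union of $(n-1)$-submanifolds with nothing further to prove; under $(\dagger)$, an open piece where $\Sigma_1 = \Sigma_2$ forces $h|_{\Sigma_1}$ to vanish to all orders there, which by $(\dagger)$ pins down $\{h=0\}$ near such points as a smooth connected hypersurface $\Sigma_0$ tangent to $\Sigma_1$, and then one propagates: the set of points where $\Sigma_1$ coincides with this $\Sigma_0$ is open and (by the finite-order analysis applied to $u_1$ minus the graph of $\Sigma_0$) closed, so $\Sigma_1 = \Sigma_0 = \Sigma_2$ on the whole component.

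I would organize the write-up as: (1) the local graphical setup and derivation of the elliptic inequality for $w$ with the sign condition from one-sidedness; (2) the infinite-order dichotomy via unique continuation, reducing to either $\Sigma_1 \equiv \Sigma_2$ on a component or $w$ of finite order everywhere on $T$; (3) in the finite-order case, invoke the nodal-set structure theorem to conclude $T$ is contained in countably many $(n-1)$-submanifolds; (4) handle the borderline case of a shared open piece separately using $(\dagger)$ or $(\ddagger)$, as above. The main obstacle I anticipate is step (2)–(4): the genuinely delicate issue, flagged in the introduction, is that $w$ could vanish to infinite order without $\Sigma_1$ and $\Sigma_2$ being globally equal, because the unique continuation only propagates locally and the touching region itself need not be connected or open in $\Sigma_1$. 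Controlling this — i.e.\ showing that an infinite-order touching point cannot coexist with $\Sigma_1 \neq \Sigma_2$ on the same component — is exactly where the structural hypotheses on $\{h=0\}$ must be used, and getting the topology of "open and closed in a connected set" argument to run cleanly (especially matching orientations of the common hypersurface $\Sigma_0$ with those of $\Sigma_1, \Sigma_2$) is where I expect the real work to be.
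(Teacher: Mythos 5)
Your proposal hinges on the claim that the difference $w = u_1 - u_2$ of the two local graphs satisfies an elliptic differential inequality of the form $|\Delta w + b\cdot\nabla w| \leq C(|w|+|\nabla w|)$. This is precisely where the argument breaks down, and it breaks down in the only nontrivial case. When the two sheets touch with the \emph{same} orientation, both $u_i$ satisfy the same PDE $Hu_i = h(x,u_i(x))$, so the difference does satisfy such an inequality---but in this case Serrin's strong maximum principle already gives $\Sigma_1\equiv\Sigma_2$ outright and the whole nodal-set machinery is unnecessary. When the sheets touch with \emph{opposite} orientations (the case that actually generates a nontrivial touching set), subtracting the two equations gives $Lw = h(x,u_1(x)) + h(x,u_2(x))$, with a \emph{plus} sign: as $w \to 0$ the right-hand side tends to $2h(x,u_1)$, which is not controlled by $|w| + |\nabla w|$ unless $h$ vanishes along $\Sigma_1$. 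So your central structural fact is false exactly where you need it, and the dichotomy (finite order, or $w \equiv 0$ locally) does not follow from unique continuation applied to $w$. This is precisely the ``infinite order but non-identical touching'' danger that the introduction flags.

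As a consequence, your reduction ``infinite-order touching at $p$ $\Rightarrow$ $\Sigma_1 = \Sigma_2$ on a subdomain containing $p$'' is unjustified, and the case you then go on to handle with $(\dagger)$/$(\ddagger)$---a genuinely shared open piece---is not the case that actually occurs. What $(\dagger)$ is really for, in the paper's argument, is different: if $w$ vanishes to infinite order at $p$ with opposite orientations, then $h|_{\Sigma_i}$ vanishes to infinite order at $p$, and $(\dagger)$ lets one rewrite both $\Sigma_i$ as graphs $u_i$ over the minimal hypersurface $\Sigma_0 = \{h=0\}$. Because $h$ is Lipschitz and vanishes on $\Sigma_0$, each $u_i$ \emph{individually} satisfies $|L_0 u_i| \leq C|u_i| + \epsilon|Du_i|$, so Aronszajn-type unique continuation applies to $u_i$ (not to $w$). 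If some $u_i$ vanishes to infinite order one gets $\Sigma_i = \Sigma_0$ locally, hence $\Sigma_i$ minimal, and the minimal-sheet maximum principle (Lemma \ref{lem:minsheet}) collapses everything to $\Sigma_0$. But if each $u_i$ vanishes only to finite order at $p$ even though $w = u_1 - u_2$ vanishes to infinite order---which can happen---one must instead estimate the nodal set $\{u_i = 0\}$ via B\"ar's theorem, using the inclusion $\Sigma_1\cap\Sigma_2\cap\{h=0\} \subset \{u_1 = u_2 = 0\}$. That final sub-case is entirely absent from your outline; Hardt--Simon applied to $w$ is of no use there since $w$ vanishes to infinite order.
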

\begin{proof}

By a covering argument, it suffices to prove that the assertion holds on a neighbourhood of each touching point $p$. So let $p \in \Sigma_1 \cap \Sigma_2$, then we may take a chart $(\tilde{B}_r(p),g) \simeq (B^{n+1}_r(0), g')$ for which $T_p\Sigma_1 = T_p\Sigma_2 $ corresponds to the plane $P=\{x_{n+1}=0\}$, $\nu_1(p)$ corresponds to $e_{n+1}$ and the hypersurfaces $\Sigma_i$ may be written as graphs $x_{n+1}=u_i(x)$ of smooth functions $u_i$ over the plane $P$.

The function $u_1$ then satisfies the quasilinear elliptic PDE \begin{equation}\label{eq:graphpmc}\begin{split}&Hu_1 = h(x,u_1(x)), \\& Hu_2 = \begin{cases} h(x,u_2(x)), & \text{ if }\nu_2(p)=\nu_1(p) \\ -h(x,u_2(x)), & \text{ if }\nu_2(p) = - \nu_1(p).\end{cases}\end{split}\end{equation}
Here $H= Q+H_0$ is the mean curvature operator, and the zero order term $H_0$ is the mean curvature of $P$ in $(B_r(0),g')$.

We now divide into cases:

\subsubsection{Same orientation}

If $\nu_1(p)=\nu_2(p)$, then the $u_i$ satisfy the same quasilinear elliptic PDE $Hu = h(x,u(x))$, so since $h$ is smooth and $u_1(0)=u_2(0)$, the strong maximum principle of Serrin \cite{serrin} implies that $u_1 \equiv u_2$. The standard connectedness argument for unique continuation then yields:

\begin{lemma}
\label{lem:sameorientation}
Let $h:M^{n+1}\rightarrow \mathbb{R}$ be a smooth function. Suppose that $\Sigma_1, \Sigma_2$ are connected smoothly embedded hypersurfaces of prescribed mean curvature $h$ in a connected open subset $U\subset M$, which lie to one side of one another. Suppose that $p\in\Sigma_1\cap \Sigma_2$ and $\nu_1(p)=\nu_2(p)$. Then $\Sigma_1 \equiv \Sigma_2$. 
\end{lemma}

\subsubsection{Minimal sheets}

It will be useful to record what occurs if one sheet, say $\Sigma_2$, is in fact a minimal hypersurface, so that $h|_{\Sigma_2}=0$. In this case, irrespective of orientation, $u_2$ again satisfies $Hu_2 = \pm 0 = h(x,u_2(x))$. Applying the strong maximum principle of Serrin \cite{serrin} again we have: 

\begin{lemma}
\label{lem:minsheet}
Let $h:M^{n+1}\rightarrow \mathbb{R}$ be a smooth function. Suppose that $\Sigma_1, \Sigma_2$ are connected smoothly embedded hypersurfaces of prescribed mean curvature $h$ in a connected open subset $U\subset M$, which lie to one side of one another. Suppose that $p\in\Sigma_1\cap \Sigma_2$ and $\Sigma_2$ is minimal. Then $\Sigma_1 \equiv \Sigma_2$. 
\end{lemma}

\subsubsection{Opposite orientation}
\label{sec:opporientation}

If $\nu_1(p)= -\nu_2(p)$, then the difference $v:= u_1 - u_2$ satisfies an inhomogeneous linear elliptic PDE of the form 
\begin{equation}
\label{eq:pde-opp}
Lv = h(x,u_1(x)) + h (x, u_2(x)).
\end{equation}
 If $h$ does not vanish at $p$, then $Lv(p) \neq 0$, so the Hessian of $v$ at $p$ has rank at least 1. The implicit function theorem then implies that, on a possibly smaller neighbourhood of $p$, the touching set $\{v=Dv=0\}$ is contained in an $(n-1)$-dimensional submanifold; (for more details one may consult \cite[Lemma 2.8]{Zhou-Zhu17}). Thus we have shown: 

\begin{lemma}
Let $h:M^{n+1}\rightarrow \mathbb{R}$ be a smooth function. Suppose that $\Sigma_1, \Sigma_2$ are connected smoothly embedded hypersurfaces of prescribed mean curvature $h$ in a connected open subset $U\subset M$, which lie to one side of one another. Suppose that $p\in\Sigma_1\cap \Sigma_2$ and $\nu_1(p)= -\nu_2(p)$. Then $\Sigma_1 \cap  \Sigma_2 \setminus \{h=0\}$ is contained in a countable union of connected, smoothly embedded $(n-1)$-dimensional submanifolds. 
\end{lemma}

It remains to estimate the stationary touching set $\Sigma_1\cap \Sigma_2 \cap \{h=0\}$, so in the remainder of this section we assume $h(p)=0$. Of course, if $h$ satisfies $(\ddagger)$, that is, if $\{h=0\}$ is already contained in a countable union of connected, smoothly embedded $(n-1)$-dimensional submanifolds, then we are already done. 

First suppose that $\Sigma_1,\Sigma_2$ have a finite-order touching at $p$, that is, if $v=u_1-u_2$ vanishes to finite order at $x=0$. Then by the work of Hardt-Simon \cite{HS89} the touching set $\{u=Du=0\}$ is in fact, again on a possibly smaller neighbourhood of $p$, contained in a countable union of $(n-2)$-dimensional submanifolds. 

It is for the remaining case of infinite-order touching that we require property $(\dagger)$. Indeed, suppose that $v=u_1-u_2$ vanishes to infinite order at 0. Then by differentiating (\ref{eq:pde-opp}) we see that $h(x,u_i(x))$ vanishes to infinite order at $x=0$, that is, $h|_{\Sigma_i}$ must vanish to infinite order at $p$. 

Therefore by property $(\dagger)$, we may choose a possibly smaller ball $\tilde{B}_r(p)$ and a possibly new chart so that $\{h=0\} \cap \tilde{B}_r(p)$ is minimal and is identified with the set $P=\{x_{n+1}=0\} \subset B^{n+1}_r(0)$. The smooth functions $u_i$ then describe the height of $\Sigma_i$ above the zero set of $h$. In particular \[\{u_1=u_2\} \cap \{h=0\} \subset \{u_1=u_2=h=0\},\]
and they satisfy (\ref{eq:graphpmc}) with zero order term $H_0=0$. 

To proceed it is useful to record the following lemma which handles the case where either graph separately vanishes to infinite order:

\begin{lemma}
Let $h: (B_r^{n+1},g) \rightarrow \mathbb{R}$ be a smooth function satisfying $\{h=0\} = \{x_{n+1}=0\} =: \Sigma_0$, and that $\Sigma_0$ is minimal with respect to $g$. Suppose that $\Sigma$ is a hypersurface in $B_r$ of prescribed mean curvature $h$ and that $\Sigma$ is given by the graph of $u$ over $\Sigma_0$. If $u$ vanishes to infinite order at $x=0$, then there exists $\delta>0$ such that $u\equiv 0$ on $|x|<\delta$; that is, $\Sigma$ coincides with $\{h=0\}$ on $B_\delta$. 
\end{lemma}
\begin{proof}
Since $u$ vanishes to infinite order at $x=0$, the mean curvature operator $Qu= H_\Sigma - H_{\Sigma_0}$ will be a perturbation of the Laplacian near 0. On the other hand, by supposition we have $H_\Sigma - H_{\Sigma_0} = h|_\Sigma$. So using that $h$ is Lipschitz, for sufficiently small $\delta>0$ we will have $|L_0 u | \leq C|u|+\epsilon |Du|$ for $|x|<\delta$, where $L_0 = a^{ij} D_{ij}$ is a uniformly elliptic operator with smooth coefficients $|a^{ij} - \delta^{ij}| <\epsilon$. 
Strong unique continuation for elliptic differential inequalities (see for instance Aronszajn \cite{aronszajn}) then implies that $u\equiv 0$ for $|x|<\delta$. 
\end{proof}

We pause to record the following corollary: 

\begin{corollary}[Unique minimal continuation]
\label{cor:mincont}
Suppose that $h:M^{n+1}\rightarrow \mathbb{R}$ satisfies either property $(\dagger)$. Let $\Sigma$ be a connected smoothly embedded hypersurface of prescribed mean curvature $h$ in a connected open subset $U\subset M$. If $h$ vanishes on an open subset of $\Sigma$ then $h|_\Sigma \equiv 0$, that is, $\Sigma$ is a minimal hypersurface in $U$. 
\end{corollary}

(The corresponding result for $(\ddagger)$ is trivial as in this case $h$ cannot vanish on an open subset of $\Sigma$.)

Now by the lemma, if some $u_i$ vanishes to infinite order at $x=0$, then $\Sigma_i = \{h=0\}$ on a neighborhood of $p$, and is therefore a minimal hypersurface. Lemma \ref{lem:minsheet} then implies that the other sheets also coincide with $\{h=0\}$ near $p$. Again the usual connectedness argument for unique continuation then shows that the $\Sigma_i$ must all be the same minimal hypersurface in $U$. 

The only remaining case is in which each $u_i$ vanishes to finite order at $p$. In this case, by a result of B\"{a}r \cite{baer} (see also \cite{BBH}) the zero set of $u_i$  is locally contained in the union of countably many $(n-1)$-dimensional submanifolds. 

This concludes the proof of Theorem \ref{thm:touchingset}.
\end{proof}

For convenience we state the following obvious corollaries of Theorem \ref{thm:touchingset} for almost embedded hypersurfaces: 
\begin{proposition}[Touching sets for almost embedded $h$-hypersurfaces]
\label{P:smooth touching set}
If the metric on $U^{n+1}$ is smooth, and $h:U^{n+1}\rightarrow \mathbb{R}$ satisfies either property $(\dagger)$ or $(\ddagger)$ then for any almost embedded hypersurface $\Si^n \subset U$ of prescribed mean curvature $h$, the touching set $\mS(\Si)$ is contained in a countable union of connected, embedded $(n-1)$-dimensional submanifolds.

In particular, the regular set $\mR(\Sigma)$ is open and dense in $\Sigma$.
\end{proposition}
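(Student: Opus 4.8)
The plan is to treat the statement as a purely local consequence of Theorem~\ref{thm:touchingset}. Since $U$ is second countable, $\mS(\Si)$ can be covered by countably many small balls, and a countable union of countable unions of connected, embedded $(n-1)$-dimensional submanifolds is again of the same form; so I would first reduce to producing, around each touching point $p\in\mS(\Si)$, a connected neighbourhood $W$ in which $\mS(\Si)$ has the asserted structure. On such a $W$ --- taken to be a small geodesic ball --- I would invoke Definition~\ref{D:almost embedded boundary} to write $\Si\cap\phi^{-1}(W)=\bigsqcup_{i=1}^{l}\Si_i$, where each $\phi(\Si_i)$ is a connected, smoothly embedded hypersurface of prescribed mean curvature $h$ in $W$ (since $\Si$ has), and for $i\neq j$ the sheet $\phi(\Si_j)$ lies to one side of $\phi(\Si_i)$. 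Because $q\in W$ lies in the touching set exactly when more than one sheet passes through it, one has $\mS(\Si)\cap W=\bigcup_{1\le i<j\le l}\big(\phi(\Si_i)\cap\phi(\Si_j)\big)$, a finite union; applying Theorem~\ref{thm:touchingset} to each pair $\phi(\Si_i),\phi(\Si_j)$ on the connected set $W$ then places each $\phi(\Si_i)\cap\phi(\Si_j)$ inside a countable union of connected, embedded $(n-1)$-dimensional submanifolds --- the alternative $\phi(\Si_i)=\phi(\Si_j)$ being discarded, as distinct sheets do not coincide for the hypersurfaces under consideration. Taking the finite union over pairs finishes the local claim, and hence the proposition.

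For the concluding sentence I would show that $\mR(\Si)=\Si\setminus\mS(\Si)$ is open and dense. Openness follows once $\mS(\Si)$ is seen to be relatively closed in $\Si$: given $\mS(\Si)\ni q_k=\phi(x_k)=\phi(y_k)$ with $x_k\neq y_k$ and $q_k\to q$, pass to a subsequence with $x_k\to x$, $y_k\to y$, so $\phi(x)=\phi(y)=q$; the case $x=y$ is impossible since $\phi$ is injective on a neighbourhood of any point, forcing $x_k=y_k$ for large $k$, hence $x\neq y$ and $q\in\mS(\Si)$. Density then follows because $\mS(\Si)$, being covered by countably many $(n-1)$-dimensional submanifolds of $U$, is $\mH^n$-null, and a relatively closed, $\mH^n$-null subset of the $n$-manifold $\Si$ has empty interior.

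I do not expect a serious obstacle: the entire geometric content is already carried by Theorem~\ref{thm:touchingset}, and what remains is the covering reduction, the verification that distinct sheets land in the favourable alternative, and an elementary measure-theoretic argument. The only mildly delicate points are justifying that $\mS(\Si)$ is relatively closed --- which uses properness/compactness of the immersion $\phi$ --- so that $\mR(\Si)$ is genuinely open rather than merely dense, and being careful that the sheets produced by the almost-embedding chart may be taken pairwise distinct, so that Theorem~\ref{thm:touchingset} yields the useful conclusion.
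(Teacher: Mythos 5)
The paper does not give an explicit proof: Proposition~\ref{P:smooth touching set} is merely stated as an ``obvious corollary'' of Theorem~\ref{thm:touchingset}, and your proof reconstructs exactly the intended argument --- local reduction via Definition~\ref{D:almost embedded boundary}, pairwise application of Theorem~\ref{thm:touchingset}, and a countable cover of $U$.

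There are two small imprecisions worth tightening. First, you discard the alternative $\phi(\Si_i)=\phi(\Si_j)$ in Theorem~\ref{thm:touchingset} by asserting ``distinct sheets do not coincide for the hypersurfaces under consideration,'' but this is not a valid justification: two sheets of an almost embedding \emph{can} coincide as subsets of $M$ (this is exactly what happens on a multiplicity-$\ge 2$ minimal component, as in Theorem~\ref{T:compactness}). The correct observation is that such a coinciding region is one where the \emph{image} $\phi(\Si)$ is a smooth embedded hypersurface, so by the definition of $\mS(\Si)$ it lies in the \emph{regular} set, not the touching set. Consequently your identity $\mS(\Si)\cap W=\bigcup_{i<j}\big(\phi(\Si_i)\cap\phi(\Si_j)\big)$ should be restricted to those pairs with $\phi(\Si_i)\neq\phi(\Si_j)$; once that is done, Theorem~\ref{thm:touchingset} applies to each such pair and you get the conclusion. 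Second, the closedness argument for $\mS(\Si)$ as written passes to convergent subsequences of preimages $x_k,y_k$, which presupposes compactness (or properness) of the domain; since $\Si\subset U$ need not be compact, a local argument is cleaner and unconditional: near a regular point $\phi(\Si)$ is embedded so a whole neighbourhood is regular, while near a touching point the local decomposition expresses $\mS(\Si)\cap W$ as a finite union of intersections $\phi(\Si_i)\cap\phi(\Si_j)$ of sets that are closed in $W$, so $\mS(\Si)$ is relatively closed in $\phi(\Si)$. The density conclusion via the $\mH^n$-nullity of $\mS(\Si)$ is fine.
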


\begin{corollary}[Unique continuation for immersed $h$-hypersurfaces]
\label{cor:uniqcont}
Suppose that $h:M^{n+1}\rightarrow \mathbb{R}$ satisfies either property $(\dagger)$ or $(\ddagger)$. Let $\Si_1^n, \Si_2^n$ be two connected immersed hypersurfaces of prescribed mean curvature $h$ in a connected open subset $U$. If $\Si_1$ and $\Si_2$ coincide on a nonempty open neighbourhood $U'$, then $\Si_1 = \Si_2$. \end{corollary}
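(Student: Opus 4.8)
The plan is to run the usual connectedness argument for unique continuation, with the strong maximum principle of Serrin \cite{serrin} and Aronszajn's unique continuation theorem \cite{aronszajn} as the local input, exactly as in the proof of Lemma \ref{lem:sameorientation}. Since $\Sigma_1$ is connected, it suffices to show that the set $A\subset \Sigma_1$ of points $p$ admitting a neighbourhood $W$ in $M$ on which $\Sigma_1$ and $\Sigma_2$ coincide is both open and closed in $\Sigma_1$. It is nonempty because $\Sigma_1$ and $\Sigma_2$ coincide on $U'$ (and in particular $\Sigma_1\cap U' = \Sigma_2\cap U' \neq\emptyset$), and openness is immediate from the definition. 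So the content is closedness, which reduces to the local claim: \emph{if $p\in\Sigma_1\cap\Sigma_2$ and $\Sigma_1,\Sigma_2$ coincide on some nonempty open subset of a neighbourhood of $p$, then they coincide on a full neighbourhood of $p$.}

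To prove this claim I would, near $p$, write the relevant embedded sheets of $\Sigma_1$ and $\Sigma_2$ as graphs $x_{n+1}=u_i(x)$ of smooth functions over a common connected domain $D$ in a suitable coordinate plane, chosen (using the overlap point $q$, which lies near $p$) so that $D$ contains the projections of both $p$ and $q$ and so that $u_1=u_2$ on a nonempty open subset of $D$. As in (\ref{eq:graphpmc}), the $u_i$ satisfy the quasilinear mean curvature equation; comparing orientations on the overlap, either both satisfy the same equation $Hu=h(\cdot,u)$, or the unit normals are opposite there, in which case $h$ vanishes on an open subset of $\Sigma_1$ and of $\Sigma_2$. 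The latter subcase contradicts $(\ddagger)$ outright, while under $(\dagger)$ Corollary \ref{cor:mincont} forces $\Sigma_1$ and $\Sigma_2$ to be minimal hypersurfaces in $U$, so $u_1,u_2$ again satisfy a common equation (the minimal surface equation). In every case, subtracting the two equations and applying the mean value theorem as in the derivation of (\ref{eq:pde-opp}) — but now with vanishing right-hand side — shows that $w:=u_1-u_2$ satisfies a homogeneous linear elliptic equation $Lw=0$ with smooth principal coefficients and bounded lower-order coefficients on $D$. Since $w$ vanishes on an open subset of the connected set $D$, Aronszajn's theorem gives $w\equiv 0$ on $D$, so $\Sigma_1$ and $\Sigma_2$ coincide on a neighbourhood of $p$, proving the claim and hence that $A=\Sigma_1$.

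Thus every point of $\Sigma_1$ has a neighbourhood in $M$ on which $\Sigma_1$ agrees with $\Sigma_2$; in particular $\Sigma_1\subseteq\Sigma_2$, and running the same argument with the roles of $\Sigma_1$ and $\Sigma_2$ exchanged (using that $\Sigma_2$ is connected and also coincides with $\Sigma_1$ on $U'$) yields $\Sigma_2\subseteq\Sigma_1$, so $\Sigma_1=\Sigma_2$. The only step really requiring care — and the one I expect to be the main obstacle — is making the reduction to the graphical picture precise for genuinely immersed (locally multi-sheeted) hypersurfaces and cleanly disposing of the opposite-orientation subcase via Corollary \ref{cor:mincont}; once the correct scalar equation for $w$ has been isolated, the unique continuation itself is routine.
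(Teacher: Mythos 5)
Your proposal is correct and is the argument the paper has in mind: Corollary \ref{cor:uniqcont} is stated without a separate proof, as an immediate consequence of the unique continuation machinery assembled for Theorem \ref{thm:touchingset}, and your open-closed connectedness argument, with Aronszajn's theorem as local input and Corollary \ref{cor:mincont} disposing of the opposite-orientation subcase, is precisely that machinery. On the step you yourself flag as delicate, two remarks. Corollary \ref{cor:mincont} is stated for connected \emph{embedded} PMC hypersurfaces, so for immersed $\Sigma_i$ you should apply it to a local embedded sheet and run the open-closed argument on the abstract source manifold of the immersion rather than on the (possibly self-intersecting) image. And it is essential, as you correctly do, to invoke Corollary \ref{cor:mincont} \emph{before} forming the linear equation $Lw=0$: in the opposite-orientation subcase the inhomogeneous term $h(\cdot,u_1)+h(\cdot,u_2)$ vanishes only on the initial overlap, not on all of $D$, and Aronszajn does not apply until minimality has wiped it out on the whole domain.
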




\begin{remark}
In the case that the metric on $M$ and the function $h$ are real analytic, we have the stronger statement that the touching set is a finite union of real analytic subvarieties $\bigcup_{k=0}^{n-1} S^k$ of respective dimension $k$. This follows from  \cite[Theorem 5.2.3]{KP92}, since in this setting the operator $L$ will have analytic coefficients, and hence the difference $v$ is also real analytic.
\end{remark}

\subsection{Compactness of stable PMC hypersurfaces}
\label{SS:Compactness of stable CMC hypersurfaces}

We are now in a position to prove our main compactness theorem. 
\begin{theorem}[Compactness theorem for almost embedded stable $c$-hypersurfaces]
\label{T:compactness}
Let $2\leq n\leq 6$. Suppose $\Si_k\subset U$ is a sequence of smooth, almost embedded, two-sided, stable $h^{(k)}$-hypersurfaces in $U$, with $\sup_{k} \Area(\Sigma_k) < \infty$. Further suppose that $h^{(k)}$ converges smoothly to a smooth function $h$ satisfying either property $(\dagger)$ or $(\ddagger)$. 

Then, up to a subsequence, $\{\Sigma_k\}$ converges locally smoothly (with multiplicity) to some almost embedded stable $h$-hypersurface $\Sigma_\infty$ in $U$. 

If additionally $\{\Si_k\}$ are all boundaries, then each connected component $\Sigma^i_\infty$ of $\Sigma_\infty$ is either:

\begin{enumerate}
\item minimal and smoothly embedded, or 
\item not minimal, with density $1$ along its regular part and $2$ along the touching set $\mS(\Si_\infty)$;  

\end{enumerate}

moreover, unless $\Sigma_i^\infty$ is minimal with multiplicity $l\geq 2$, we have that $\Sigma^\infty$ is locally a boundary - that is, for any point $p\in \Sigma^i_\infty$ there is a neighborhood $\sB_p$ of $p$ and an $\Omega \in \C(M)$ so that \[\Sigma_\infty \cap \sB_p =\Sigma^i_\infty \cap \sB_p = \partial \Omega \lc \sB_p.\] 

On the other hand, if $h^{(k)}\to h=0$, then up to a subsequence, $\{\Sigma_k\}$ converges locally smoothly (with multiplicity) to some smoothly embedded stable minimal hypersurface $\Sigma_\infty$ in $U$.
\end{theorem}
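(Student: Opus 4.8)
\emph{Convergence and almost embeddedness.} Since $h^{(k)}\to h$ smoothly, $c:=\sup_k\sup|h^{(k)}|<\infty$ and the $\Si_k$ form a uniformly area-bounded family of stable PMC hypersurfaces with prescription bounded by $c$. The curvature estimates and compactness of Theorem~\ref{T:curvature estimates and compactness}, which apply uniformly once $h^{(k)}\to h$ smoothly, then yield after passing to a subsequence the local smooth convergence $\Si_k\to\Si_\infty$ with some integer multiplicity. The equations $H_{\Si_k}=h^{(k)}|_{\Si_k}$ and the stability inequalities $\Rom{2}_{\Si_k}\geq 0$ pass to the limit under smooth convergence (the coefficients $\Ric$, $|A|^2$, $\partial_\nu h^{(k)}$ all converge), so $\Si_\infty$ is an immersed, two-sided, stable $h$-hypersurface. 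At a point $p$ where $\Si_\infty$ is not embedded, the multiplicity forces the local picture to be a finite ordered family of graphs $u_1\leq\cdots\leq u_m$ over $T_p\Si_\infty$, obtained as limits of disjoint graphs of the $\Si_k$; consecutive graphs may touch but cannot cross, which is exactly the almost-embedding condition, so $\Si_\infty$ is an almost embedded stable $h$-hypersurface with $\mS(\Si_\infty)\subset\bigcup_i\{u_i=u_{i+1}\}$. (Properties $(\dagger)$/$(\ddagger)$ are not needed up to here.)

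\emph{The boundary case; ruling out density $\geq 3$.} Now assume each $\Si_k=\partial\Om_k$. Near a point where $m$ sheets $u_1<\cdots<u_m$ of $\Si_k$ come together, the $m+1$ regions they bound must alternate between $\Om_k$ and its complement -- otherwise a middle graph would not lie in the reduced boundary $\partial\Om_k$ -- so consecutive sheets carry opposite outer unit normals, and hence the odd- and even-indexed limit sheets carry opposite orientations. Fix a connected component $\Si^i_\infty$. If three sheets passed through a point $p$, then by the ordering $u_1(p)=u_2(p)=u_3(p)$; sheets $1$ and $3$ carry the same orientation, hence solve the same PMC equation and touch at the interior point $p$, so Lemma~\ref{lem:sameorientation} gives $u_1\equiv u_3$ and therefore $u_1\equiv u_2\equiv u_3$. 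But sheets $1$ and $2$ carry opposite orientations, so this common hypersurface satisfies $H=h$ and $H=-h$ with respect to the same normal, forcing $h\equiv 0$ on an open piece of $\Si^i_\infty$; by Corollary~\ref{cor:mincont} (under $(\dagger)$; under $(\ddagger)$ this configuration is vacuously impossible) $\Si^i_\infty$ is then minimal. Hence a non-minimal component has density at most $2$.

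\emph{Density, and local boundary structure.} Where a non-minimal $\Si^i_\infty$ has density exactly $2$ we have two distinct sheets of opposite orientation whose difference $v$ satisfies the inhomogeneous linear elliptic equation (\ref{eq:pde-opp}); Theorem~\ref{thm:touchingset} -- the step that uses $(\dagger)$ or $(\ddagger)$ -- forces either that the two sheets coincide (impossible here, since this would again make the component minimal) or that the density-$2$ set, which is precisely $\mS(\Si^i_\infty)$, is contained in a countable union of $(n-1)$-dimensional submanifolds; together with Proposition~\ref{P:smooth touching set} this gives density $1$ on the open dense regular set $\mR(\Si^i_\infty)$, which is alternative (2). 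For a minimal component, any two touching minimal sheets coincide by the classical strong maximum principle, so the component is smoothly embedded (possibly with multiplicity), which is alternative (1). For the boundary assertion, uniform area bounds give uniform perimeter bounds, so after a further subsequence $\Om_k\to\Om$ in $L^1_{loc}$ for some $\Om\in\C(U)$ and $\partial[[\Om_k]]\to\partial[[\Om]]$ as currents; since also $[[\Si_k]]\to[[\Si_\infty]]$, with the opposite orientations of consecutive sheets producing cancellation over the lower-dimensional touching set, one gets $[[\Si^i_\infty]]\lc\sB_p=\partial[[\Om]]\lc\sB_p$ near any $p\in\Si^i_\infty$ -- except over a minimal component of multiplicity $l\geq 2$, where the alternating orientations cause $\partial[[\Om]]$ to record less than the full multiplicity, which is the stated exception.

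\emph{The case $h\equiv 0$, and the main obstacle.} If $h^{(k)}\to h\equiv 0$, then $\Si_\infty$ is a stable \emph{minimal} immersed hypersurface, and by the strong maximum principle for minimal hypersurfaces any two touching sheets coincide, so $\mS(\Si_\infty)=\emptyset$ and $\Si_\infty$ is smoothly embedded (with multiplicity). The main obstacle is the boundary case: correctly tracking the orientations of the limit sheets and combining the same-orientation maximum principle (Lemma~\ref{lem:sameorientation}), the opposite-orientation equation (\ref{eq:pde-opp}), the touching-set estimate (Theorem~\ref{thm:touchingset}) and unique minimal continuation (Corollary~\ref{cor:mincont}) to exclude density $\geq 3$ and to force minimality in the degenerate configurations; a further fiddly point is checking that ``$\Si_\infty$ is locally a boundary'' near the touching set, i.e. that the cancellation of opposite orientations is exactly what identifies the immersed current with $\partial\Om$.
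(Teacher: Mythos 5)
Your proposal is correct and follows essentially the same route as the paper's proof: curvature estimates (Theorem \ref{T:curvature estimates and compactness}) give the local smooth convergence; the fact that consecutive sheets of $\partial\Omega_k$ must carry opposite outer normals is what the paper proves by a contradiction argument via the Constancy Theorem applied to $\Omega_k\lc U_i$, and your "alternating regions" phrasing is the same idea stated directly (you should cite the Constancy Theorem there to make it rigorous); Lemma \ref{lem:sameorientation}, the forced $h\equiv 0$, and Lemma \ref{lem:minsheet}/Corollary \ref{cor:mincont} then exclude non-minimal density $\geq 3$; and the current convergence $\partial\Omega_k\to\partial\Omega_\infty$ with density $1$ on the regular part gives the local boundary statement. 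The only small imprecision is in the density-$2$ discussion: density $1$ on $\mR(\Sigma^i_\infty)$ is immediate (only one sheet passes through a regular point once you have ruled out two opposite sheets collapsing onto it, which would force minimality), whereas Theorem \ref{thm:touchingset}/Proposition \ref{P:smooth touching set} are what guarantee the touching set is small and the regular set is dense, not what give density $1$ there.
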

\begin{proof}[Proof of Theorem \ref{T:compactness}]

The locally smooth convergence results follow straightforwardly from Theorem \ref{T:curvature estimates and compactness} and the almost embedded assumption. One can also rule out sheets of the same orientation coming together, using the maximum principle 
Lemma \ref{lem:sameorientation} and the classical maximum principle for embedded minimal hypersurfaces (c.f. \cite{Colding-Minicozzi}) respectively. 

Now we consider the case where the $\Sigma_k$ are all boundaries. Namely, denote $\Si_k=\partial\Om_k$ for some $\Om_k\in \C(U)$. By standard compactness \cite[Theorem 6.3]{Si83}, a subsequence of the $\partial\Om_k$ converges weakly as currents to some $\partial\Om_\infty$ with $\Om_\infty\in\C(U)$. 

Take an arbitrary point $p\in\Sigma_\infty $. Suppose $p$ has density $l\geq 1$. Since we know the limit is almost embedded, we have an ordered graphical decomposition of $\Sigma_\infty$ in a neighborhood $\sB_p \subset U$ of $p$ given by $\bigcup_{i=1}^l \Sigma^i_\infty$, where each sheet has outward unit normal $\nu^i_\infty$ and all sheets touch at $p$. By the locally smooth convergence of $\Si_k$ to $\Si_\infty$, for $k$ large enough $\Si_k\cap \sB_p$ also has an ordered graphical decomposition $\bigcup_{i=1}^{l_k}\Si_k^i$, where $\Sigma^i_k \rightarrow \Sigma^i_\infty$. 

We claim that if there are two distinct sheets $\Sigma^i_\infty, \Sigma^j_\infty$ with the same orientation $\nu^i_\infty(p)=\nu^j_\infty(p)$, then there must be another sheet $\Sigma^m_\infty$ in between, $i<m<j$, with the opposite orientation; and hence, $\Sigma_\infty \cap \sB_p$ is a minimal hypersurface with multiplicity $l$. 

Indeed, suppose for the sake of contradiction that $\Sigma_\infty$ has two sheets with the same orientation and no sheet in between. Then the same is true for $\Sigma_k$ for large enough $k$, and we may assume without loss of generality that $i=1, j=2$. Then $\Sigma^1_k,\Sigma^2_k$ have unit normals $\nu^1_k,\nu^2_k$ pointing in the same direction. But then $\sB_p\backslash (\Si^1_k\cup\Si^2_k)$ has three connected components $U_0, U_1, U_2$ such that, counting orientation, $(\partial U_0)\lc \sB_p=\Si^1_k$, $(\partial U_1)\lc \sB_p=\Si^2_k-\Si^1_k$, and $(\partial U_2)\lc \sB_p=-\Si^2_k$. 

On the other hand, for each $i$ the Constancy Theorem \cite[Theorem 26.27]{Si83} applied to $\Om_k\lc U_i$ implies that $\Om_k\lc U_i$ is identical to either $\emptyset$ or $U_i$. That is, $\Omega_k \lc \sB_p = \sum_{i=0}^2 a_i U_i$, where each $a_i =0,1$. It is then easy to see that any choice of the $a_i$ will contradict the fact that, counting orientation, $\partial(\Om_k\lc \sB_p) \lc \sB_p= \Sigma_k \cap \sB_p = \Si^1_k+\Si^2_k$. 

Thus if there were the two sheets $\Sigma^i_\infty, \Sigma^j_\infty$ with the same orientation, then there must be another sheet $\Sigma^m_\infty$ in between with the opposite orientation. But since all sheets touch at $p$, by Lemma \ref{lem:sameorientation} the sheets $\Sigma^i_\infty, \Sigma^j_\infty$ must coincide, which forces $\Sigma^m_\infty$ to coincide as well. 

But then $\Sigma^i_\infty = \Sigma^m_\infty$ has prescribed mean curvature $h$ with respect to both orientations, so $h$ must vanish and it must be a minimal hypersurface. Lemma \ref{lem:minsheet} then implies that any other sheet touching at $p$ must coincide with $\Sigma^i_\infty$ as claimed. 

Of course, if $l>2$ then there must be two sheets converging with the same orientation. Thus we have shown that either: $p$ has density $l=1$ and is a regular point; $p$ has density $l=2$ and is a touching point between two non-minimal sheets of opposite orientation; or $p$ is a regular point on a minimal hypersurface of density $l \geq 2$. 

In particular if $\Sigma^i_\infty$ is not minimal then it has density 1 on its regular part and density 2 on its touching set. Having density 1 on the regular part is enough to conclude that $\Sigma_\infty \cap \sB_p = \Sigma^i_\infty \cap \sB_p = \partial\Omega_\infty \lc \sB_p$ as currents.
\end{proof}


\section{The $h$-Min-max construction}
\label{S:The c-Min-max construction}

In this section, we present the setup of the min-max construction mainly following Pitts \cite{P81}. We also prove the existence of a non-trivial sweepout with positive $\Ac$-min-max value.

\subsection{Homotopy sequences.}\label{homotopy sequences}

We will introduce the min-max construction using the scheme developed by Almgren and Pitts \cite{AF62, AF65, P81}; see also \cite[Section 3]{Zhou-Zhu17}.

\begin{definition}
\label{cell complex} 
(cell complex).
\begin{enumerate}
\item Denote $I=[0, 1]$, $I_0=\partial I=I\backslash (0, 1)$;

\item For $j\in\N$, $I(1, j)$ is the cell complex of $I$, whose $1$-cells are all intervals of form $[\frac{i}{3^{j}}, \frac{i+1}{3^{j}}]$, and $0$-cells are all points $[\frac{i}{3^{j}}]$; 

\item For $p=0 ,1$, $\al\in I(1, j)$ is a $p$-cell if $dim(\al)=p$. A $0$-cell is also called a vertex;

\item $I(1, j)_p$ denotes the set of all $p$-cells in $I(1, j)$, and $I_0(1, j)_0$ denotes the set $\{[0], [1]\}$;

\item Given a $1$-cell $\al\in I(1, j)_1$, and $k\in\N$, $\al(k)$ denotes the $1$-dimensional sub-complex of $I(1, j+k)$ formed by all cells contained in $\al$. For $q=0, 1$, $\al(k)_q$ and $\al_0(k)_q$ denote respectively the set of all $q$-cells of $I(1, j+k)$ contained in $\al$, or in the boundary of $\al$;

\item The boundary homeomorphism $\partial: I(1, j)\rightarrow I(1, j)$ is given by $\partial[a, b]=[b]-[a]$ if $[a, b]\in I(1, j)_1$, and $\partial[a]=0$ if $[a]\in I(1, j)_0$;

\item The distance function $\md: I(1, j)_0\times I(1, j)_0\rightarrow\N$ is defined as $\md(x, y)=3^{j}|x-y|$;

\item The map $\n(i, j): I(1, i)_{0}\to I(1, j)_{0}$ is defined as: $\n(i, j)(x)\in I(1, j)_{0}$ is the unique element of $I(1, j)_0$, such that $\md\big(x, \n(i, j)(x)\big)=\inf\big\{\md(x, y): y\in I(1, j)_{0}\big\}$.
\end{enumerate}
\end{definition}

For a map to the space of Caccioppoli sets: $\phi: I(1, j)_{0}\rightarrow\C(M)$, the \emph{fineness} of $\phi$ is defined as:
\begin{equation}\label{fineness}
\mf(\phi)=\sup\Big\{\frac{\M\big(\partial\phi(x)-\partial\phi(y)\big)}{\md(x, y)}:\ x, y\in I(1, j)_{0}, x\neq y\Big\}.
\end{equation}
Similarly we can define the fineness of $\phi$ with respect to the $\F$-norm and $\mF$-metric. We use $\phi: I(1, j)_{0}\rightarrow\big(\C(M), \{0\}\big)$ to denote a map such that $\phi\big(I(1, j)_{0}\big)\subset\C(M)$ and $\partial\phi|_{I_{0}(1, j)_{0}}=0$, i.e. $\phi([0]), \phi([1])=\emptyset$ or $M$.

\begin{definition}\label{homotopy for maps}
Given $\de>0$ and $\phi_{i}: I(1, k_{i})_{0}\rightarrow\big(\C(M), \{0\}\big)$, $i=0,1$, we say \emph{$\phi_{1}$ is $1$-homotopic to $\phi_{2}$ in $\big(\C(M), \{0\}\big)$ with fineness $\de$}, if $\exists\ k_{3}\in\N$, $k_{3}\geq\max\{k_{1}, k_{2}\}$, and
$$\psi: I(1, k_{3})_{0}\times I(1, k_{3})_{0}\rightarrow \C(M),$$
such that
\begin{itemize}
\setlength{\itemindent}{1em}
\item $\mf(\psi)\leq \de$;
\item $\psi([i], x)=\phi_{i}\big(\n(k_{3}, k_{i})(x)\big)$, $i=0,1$;
\item $\partial\psi\big(I(1, k_{3})_{0}\times I_{0}(1, k_{3})_{0}\big)=0$.
\end{itemize}
\end{definition}

\begin{definition}\label{(1, M) homotopy sequence}
A \emph{$(1, \M)$-homotopy sequence of mappings into $\big(\C(M), \{0\}\big)$} is a sequence of mappings $\{\phi_{i}\}_{i\in\N}$,
$$\phi_{i}: I(1, k_{i})_{0}\rightarrow\big(\C(M), \{0\}\big),$$
such that $\phi_{i}$ is $1$-homotopic to $\phi_{i+1}$ in $\big(\C(M), \{0\}\big)$ with fineness $\de_{i}$, and
\begin{itemize}
\setlength{\itemindent}{1em}
\item $\lim_{i\rightarrow\infty}\de_{i}=0$;
\item $\sup_{i}\big\{\M(\partial\phi_{i}(x)):\ x\in I(1, k_{i})_{0}\big\}<+\infty$.
\end{itemize}
\end{definition}
\begin{remark}
Note that the second condition implies that $\sup_{i}\big\{\Ac(\phi_{i}(x)):\ x\in I(1, k_{i})_{0}\big\}<+\infty$.
\end{remark}

\begin{definition}\label{homotopy for sequences}
Given two $(1, \M)$-homotopy sequences of mappings $S_{1}=\{\phi^{1}_{i}\}_{i\in\N}$ and $S_{2}=\{\phi^{2}_{i}\}_{i\in\N}$ into $\big(\C(M), \{0\}\big)$, \emph{$S_{1}$ is homotopic to $S_{2}$} if $\exists\ \{\de_{i}\}_{i\in\N}$, such that
\begin{itemize}
\setlength{\itemindent}{1em}
\item $\phi^{1}_{i}$ is $1$-homotopic to $\phi^{2}_{i}$ in $\big(\C(M), \{0\}\big)$ with fineness $\de_{i}$;
\item $\lim_{i\rightarrow \infty}\de_{i}=0$.
\end{itemize}
\end{definition}

It is easy to see that the relation ``is homotopic to" is an equivalence relation on the space of $(1, \M)$-homotopy sequences of mappings into $\big(\C(M), \{0\}\big)$. An equivalence class is a \emph{$(1, \M)$-homotopy class of mappings into $\big(\C(M), \{0\}\big)$}. Denote the set of all equivalence classes by $\pi^{\#}_{1}\big(\C(M, \M), \{0\}\big)$.

\subsection{Min-max construction.}

\begin{definition}
(Min-max definition) Given $\Pi\in\pi^{\#}_{1}\big(\C(M, \M), \{0\}\big)$, define $\bL^h: \Pi\rightarrow\R^{+}$ to be the function given by:
\[ \bL^h(S)=\bL^h(\{\phi_{i}\}_{i\in\N})=\limsup_{i\rightarrow\infty}\max\big\{\Ac\big(\phi_{i}(x)\big):\ x \textrm{ lies in the domain of $\phi_{i}$}\big\}. \]
The \emph{$\mathcal{A}^h$-min-max value of $\Pi$} is defined as
\begin{equation}\label{width}
\bL^h(\Pi)=\inf\{\bL^h(S):\ S\in\Pi\}.
\end{equation}

A sequence $S=\{\phi_i\}\in\Pi$ is called a \emph{critical sequence} if $\bL^h(S)=\bL^h(\Pi)$. 

Given a critical sequence $S$, then $K(S)=\{V=\lim_{j\to \infty}|\partial \phi_{i_{j}}(x_{j})|:\ \textrm{$x_{j}$ lies in the domain of $\phi_{i_{j}}$}\}$ is a compact subset of $\V_n(M^{n+1})$. 
The \emph{critical set} of $S$ is the subset $C(S)\subset K(S)$ defined by 
\begin{equation}\label{eq:criticalset} C(S)=\{V=\lim_{j\rightarrow\infty}|\partial\phi_{i_j}(x_j)|:\ \text{with} \lim_{j\to\infty} \Ac(\phi_{i_j}(x_j))=\bL^h(S)\}; \end{equation}
we call any such sequence $\{\partial\phi_{i_j}(x_j)\}$ as in (\ref{eq:criticalset}) a \emph{min-max sequence}.
\end{definition}

Note that by \cite[4.1(4)]{P81}, we immediately have:
\begin{lemma}
\label{L:critical sequence}
Given any $\Pi\in\pi^{\#}_{1}\big(\C(M, \M), \{0\}\big)$, there exists a critical sequence $S\in \Pi$.
\end{lemma}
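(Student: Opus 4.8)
The plan is to realize the infimum in $\bL^h(\Pi)=\inf\{\bL^h(S):S\in\Pi\}$ by a diagonal (interleaving) argument; this is precisely the content of \cite[4.1(4)]{P81}. Write $L:=\bL^h(\Pi)$, which is finite since any $S=\{\phi_i\}\in\Pi$ has $\sup_i\M(\partial\phi_i(x))<\infty$ and hence, using $|\int_\Om h|\le \|h\|_\infty\vol(M)$, also $\sup_i\Ac(\phi_i(x))<\infty$. By definition of the infimum, pick representatives $S_j=\{\phi^j_i\}_{i\in\N}\in\Pi$ with $\bL^h(S_j)<L+1/j$. Since ``is homotopic to'' is an equivalence relation, the $S_j$ are mutually homotopic; fix for each $j$ a family of $1$-homotopies $\Psi^j_i$ between $\phi^j_i$ and $\phi^{j+1}_i$ with fineness $\eta^j_i$, where $\eta^j_i\to0$ as $i\to\infty$, and write $\delta^j_i$ for the fineness of the $1$-homotopy joining $\phi^j_i$ to $\phi^j_{i+1}$ inside $S_j$, so $\delta^j_i\to 0$ as $i\to\infty$.

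Next I would choose ``switch indices'' $n_1<n_2<\cdots$ so that for every $i\ge n_j$ one has simultaneously: (a) $\max_x\Ac(\phi^j_i(x))<L+1/j$ (possible since $\bL^h(S_j)$ is a $\limsup$ that is $<L+1/j$); (b) $\delta^j_i<1/j$; and (c) $\eta^l_i<2^{-l}/j$ for all $l<j$. Each condition constrains only finitely many of the quantities $\delta^j_i$, $\eta^j_i$, and each of these tends to $0$ as its term-index grows, so such $n_j$ exist. Define the interleaved sequence $S=\{\psi_i\}_{i\in\N}$ by $\psi_i:=\phi^j_i$ whenever $n_j\le i<n_{j+1}$, and $\psi_i:=\phi^1_i$ for $i<n_1$.

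Finally I would verify that $S$ is a critical sequence. It is a $(1,\M)$-homotopy sequence: consecutive maps within a block are $1$-homotopic with fineness $\delta^j_i<1/j$, and across a block boundary $i=n_{j+1}-1$ one concatenates the $S_j$-homotopy $\phi^j_{n_{j+1}-1}\sim\phi^j_{n_{j+1}}$ with $\Psi^j_{n_{j+1}}$ to produce a $1$-homotopy $\psi_{n_{j+1}-1}\sim\psi_{n_{j+1}}$ of fineness $\lesssim\delta^j_{n_{j+1}-1}+\eta^j_{n_{j+1}}\to0$ by (b), (c); the uniform mass bound follows since on each block (a) gives $\M(\partial\psi_i(x))\le\Ac(\psi_i(x))+\|h\|_\infty\vol(M)\le L+1+\|h\|_\infty\vol(M)$, while the finitely many initial terms are bounded because $S_1$ is a $(1,\M)$-homotopy sequence. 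It lies in $\Pi$: for $i$ in the $j$-th block, concatenating $\Psi^{j-1}_i,\dots,\Psi^1_i$ gives a $1$-homotopy $\psi_i=\phi^j_i\sim\phi^1_i$ of fineness $\lesssim\sum_{l=1}^{j-1}\eta^l_i<1/j\to0$ by (c), so $S$ is homotopic to $S_1$. And $\bL^h(S)=L$: we have $\bL^h(S)\ge L$ since $S\in\Pi$, while (a) gives $\max_x\Ac(\psi_i(x))<L+1/j$ on the $j$-th block, hence $\bL^h(S)=\limsup_i\max_x\Ac(\psi_i(x))\le L$. Thus $S\in\Pi$ with $\bL^h(S)=\bL^h(\Pi)$.

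The one point requiring care is the behavior of fineness under concatenation of $1$-homotopies, i.e.\ that a concatenation of boundedly many $1$-homotopies, each of small fineness, is again a $1$-homotopy of comparably small fineness (after reparametrizing the time variable and refining the cell complex). This is a standard technical fact from \cite[\S 4.1]{P81} (used in the same way in \cite[Section 3]{Zhou-Zhu17}); granting it, the interleaving construction above establishes the lemma.
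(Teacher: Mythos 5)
Your proposal is correct and takes essentially the same route as the paper, which simply cites \cite[4.1(4)]{P81} for this lemma; what you have done is unpack the diagonal/interleaving argument underlying Pitts' result (together with the standard fineness-under-concatenation fact from \cite[\S 4.1]{P81}) rather than merely citing it.
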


The main theorem of this paper is as follows:
\begin{theorem}
\label{thm:main-body}
Let $2\leq n\leq 6$. Consider a smooth closed Riemannian manifold $M^{n+1}$ and a smooth function $h$, which satisfies $\int_M h \geq 0$ as well as either property $(\dagger)$ or $(\ddagger)$. There exists $\Pi\in\pi^{\#}_{1}\big(\C(M, \M), \{0\}\big)$ and a critical sequence $S\in \Pi$ such that:
\begin{itemize}
\item $\bL^h(\Pi)=\bL^h(S)>0$;
\item There exists an element $V$ of $C(S)$ induced by a nontrivial, smooth, almost embedded, closed hypersurface $\Sigma^n \subset M$ of prescribed mean curvature $h$. Moreover $V$ has multiplicity one, except possibly on components of $\Sigma$ on which $h$ vanishes. 
\end{itemize}
\end{theorem}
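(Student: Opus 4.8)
The plan is to realize Theorem \ref{thm:main-body} as the culmination of the five-step Almgren--Pitts scheme outlined in the introduction, assembling the ingredients already prepared in the preceding sections. First I would establish the existence of the homotopy class $\Pi$ with positive width. The zero slice $\phi([0])=\emptyset$ and full slice $\phi([1])=M$ of a Morse-theoretic sweepout of $M$ give a discrete $(1,\M)$-homotopy class, and since $\int_M h\geq 0$ forces $\Ac(M)=-\int_M h\leq 0=\Ac(\emptyset)$, the maximal $\Ac$-value along any competitor sweepout is attained at an interior slice. Using the isoperimetric inequality for small volumes (Theorem \ref{T:Isoperimetric areas}) together with the fact that the $h$-volume term is of lower order relative to area, one shows $\bL^h(\Pi)\geq \bL^0(\Pi)/2>0$ (this is precisely the content referenced as Theorems \ref{T:Isoperimetric areas} and \ref{T:existence of nontrivial sweepouts}). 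By Lemma \ref{L:critical sequence} there is a critical sequence $S\in\Pi$ with $\bL^h(S)=\bL^h(\Pi)>0$, which gives the first bullet.

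For the second bullet, I would run the tightening and regularity machinery on $S$. The tightening step (Section \ref{S:tightening}) replaces $S$ by an equivalent critical sequence every element $V$ of whose critical set $C(S)$ has $c$-bounded first variation, $c=\sup|h|$, and is $h$-almost minimizing in small annuli (Definition \ref{D:c-am-varifolds}); moreover $C(S)$ is nonempty. Fix such a $V$. The $h$-replacement theory (Section \ref{S:c-Almost minimizing}) produces, in any sufficiently small ball, an $h$-replacement $V^*=\lim|\partial\Omega_i^*|$ where each $\Omega_i^*$ is a local $\Ac$-minimizer; by Theorem \ref{T:regularity of Ac minimizers} its boundary is a smooth embedded stable $h$-hypersurface (in the relevant dimension range), and the compactness Theorem \ref{T:compactness} guarantees that $V^*$ is an almost embedded stable $h$-hypersurface whose non-minimal components have multiplicity one (density $1$ on the regular set, $2$ on the touching set), with minimal components possibly of higher multiplicity and embedded. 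The mass defect between $V$ and $V^*$ is controlled by $\|h\|_\infty\vol(U)\to 0$ under blowup, so a blowup of $V$ satisfies the Colding--De~Lellis good-replacement property and is regular with planar tangent cones (Proposition \ref{L:blowup is regular}). This upgrades $V$ itself to an almost embedded hypersurface away from a small set.

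The heart of the argument — and the step I expect to be the main obstacle — is the gluing of successive concentric annular replacements $V^*$ and $V^{**}$ and the identification of $V$ with the extended replacement near the center of the ball. The danger, absent in the minimal and CMC cases, is infinite-order but non-identical touching, and the possibility of two non-minimal sheets gluing into a higher-multiplicity minimal sheet. Here I would invoke the touching-set estimates: under $(\dagger)$ or $(\ddagger)$, Proposition \ref{P:smooth touching set} ensures $\mS(\Sigma)$ is $(n-1)$-rectifiable, so by Sard's lemma the gluing interface (a generic concentric sphere) is transverse to the touching set and meets it in a set of codimension $\geq 2$; across the regular part the replacements match smoothly with orientations (using that $V^{**}$ is represented by a boundary in $A_1\cup A_2$, via the Constancy Theorem argument as in the proof of Theorem \ref{T:compactness}), and Corollary \ref{cor:uniqcont} (unique continuation for immersed $h$-hypersurfaces) propagates this matching across the touching set. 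Iterating the replacements and extending to the center, one shows $V^*$ has at worst a removable singularity there; a moving-sphere argument then matches the densities of $V$ and $V^*$ in the annulus, and the constancy/multiplicity dichotomy from Theorem \ref{T:compactness} yields that $\Sigma$ is a nontrivial smooth closed almost embedded $h$-hypersurface with multiplicity one except on minimal components where $h$ vanishes. This $V\in C(S)$ completes the proof.
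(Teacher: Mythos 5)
Your proposal is correct and takes essentially the same approach as the paper, whose own proof of Theorem \ref{thm:main-body} is simply a one-line combination of Theorem \ref{T:existence of nontrivial sweepouts}, Theorem \ref{T: existence of almost minimizing varifold}, and Theorem \ref{T:main-regularity}; you unpack those three ingredients in the intended way. One minor imprecision: the claimed comparison $\bL^h(\Pi)\geq \bL^0(\Pi)/2$ is not what Theorem \ref{T:existence of nontrivial sweepouts} actually establishes (the paper proves $\bL^h(\Pi)\geq cV$ for a fixed small volume $V$ via the isoperimetric inequality, a bound unrelated to $\bL^0$), and it may fail in general, but since the required conclusion $\bL^h(\Pi)>0$ is correctly attributed to the right theorems this does not affect the validity of the argument.
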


\begin{proof}[Proof of Theorem \ref{thm:main-body}]
This follows from combining Theorem \ref{T:existence of nontrivial sweepouts}, Theorem \ref{T: existence of almost minimizing varifold} and Theorem \ref{T:main-regularity}.
\end{proof}

\subsection{Existence of nontrivial sweepouts}
\label{SS:Existence of nontrivial sweepouts}

\begin{theorem}
\label{T:existence of nontrivial sweepouts}
Given $c$, there exists $\Pi \in \pi^{\sharp}_1\big(\C(M, \M), \{0\}\big)$, such that for any function $h:M\rightarrow \mathbb{R}$ with $\sup |h| =c <\infty$ and $\int_M h\geq 0$, we have $\bL^h(\Pi)>0$. Moreover, for any critical sequence $S\in \Pi$, the critical set $C(S)$ is non-empty.
\end{theorem}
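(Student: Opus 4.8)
\textbf{Proof plan for Theorem \ref{T:existence of nontrivial sweepouts}.}

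The plan is to construct a single sweepout $\Pi$ by a foliation-type family adapted to a Morse function on $M$, and to show that its $\bL^h$-width is bounded below by a positive constant that depends only on $c$ (through $V_0,C_0$ from Theorem \ref{T:Isoperimetric areas}), independently of the particular $h$ with $\sup|h|=c$ and $\int_M h \geq 0$. First I would fix a Morse function $f:M\to[0,1]$ and take the continuous family $\{\Omega_x = \{f<x\}\}_{x\in I}$, with $\Omega_0=\emptyset$, $\Omega_1=M$; passing to the Almgren--Pitts discrete setting as in \cite[Section 3]{Zhou-Zhu17}, this produces a $(1,\M)$-homotopy sequence and hence an element $\Pi\in\pi^{\#}_1(\C(M,\M),\{0\})$. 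The key structural point is that $\Pi$ is chosen once and for all, depending only on $M$ (and the background Morse function), not on $h$.

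Next I would prove the lower bound $\bL^h(\Pi)>0$. Fix any $S=\{\phi_i\}\in\Pi$. For each $i$, consider the function $x\mapsto \vol(\phi_i(x))$ along the domain of $\phi_i$; since $\partial\phi_i$ vanishes at the endpoints, $\phi_i$ interpolates (up to the fineness error $\delta_i\to 0$) between $\emptyset$ and $M$, so the enclosed volume ranges from nearly $0$ to nearly $\vol(M)$. By a discrete intermediate value argument — using that the mass of $\partial(\phi_i(x)-\phi_i(y))$ controls $\vol(\phi_i(x)\triangle\phi_i(y))$ via the isoperimetric inequality, so consecutive volumes cannot jump by more than a quantity tending to $0$ with $\delta_i$ — there is some vertex $x_i$ in the domain of $\phi_i$ with $\vol(\phi_i(x_i))\in[\tfrac12 V_0, V_0]$ (for $i$ large, once $\delta_i$ is small enough). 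For that slice, Theorem \ref{T:Isoperimetric areas} gives $\Area(\partial\phi_i(x_i))\geq C_0 (\tfrac12 V_0)^{n/(n+1)}=:2a_0>0$. Then
\[
\Ac(\phi_i(x_i)) = \Area(\partial\phi_i(x_i)) - \int_{\phi_i(x_i)} h \geq 2a_0 - c\,\vol(\phi_i(x_i)) \geq 2a_0 - cV_0.
\]
Here I would simply have chosen $V_0$ at the outset small enough (depending only on $c$ and $M$, shrinking the one from Theorem \ref{T:Isoperimetric areas} if necessary) that $cV_0 \leq a_0$, giving $\Ac(\phi_i(x_i))\geq a_0$. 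Hence $\bL^h(S) = \limsup_i \max_x \Ac(\phi_i(x)) \geq a_0>0$, and taking the infimum over $S\in\Pi$ yields $\bL^h(\Pi)\geq a_0>0$. Note the hypothesis $\int_M h\geq 0$ is used (as remarked after Theorem \ref{T: main}) to guarantee $\Ac(M)=-\int_M h\leq 0\leq \Ac(\emptyset)$, so that the maximal slice is genuinely interior and the width is attained by nontrivial slices; this is what makes $C(S)$ behave well, though it is not strictly needed for the positivity bound above.

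For the final assertion that $C(S)$ is non-empty for any critical sequence $S\in\Pi$: by Lemma \ref{L:critical sequence} critical sequences exist, and for a critical sequence $\bL^h(S)=\bL^h(\Pi)$. By definition of $\bL^h(S)$ as a $\limsup$ of maxima, we may choose $i_j\to\infty$ and vertices $x_j$ in the domain of $\phi_{i_j}$ with $\Ac(\phi_{i_j}(x_j))\to\bL^h(S)$; since the masses $\M(\partial\phi_{i_j}(x_j))$ are uniformly bounded (by Definition \ref{(1, M) homotopy sequence}), a subsequence of $|\partial\phi_{i_j}(x_j)|$ converges in $\V_n(M)$ to some $V$, which by construction lies in $C(S)$. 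The only obstacle worth flagging is the discrete intermediate value step — one must quantify carefully how a bound $\delta_i$ on fineness forces consecutive volumes to be close (using the isoperimetric inequality applied to $\phi_i(x)\triangle\phi_i(y)$ for adjacent vertices $x,y$), and ensure the target window $[\tfrac12 V_0, V_0]$ is wide enough to be hit once $\delta_i$ is small; this is entirely analogous to the corresponding argument in \cite{Zhou-Zhu17} and presents no essential new difficulty.
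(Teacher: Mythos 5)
Your argument for the positivity of $\bL^h(\Pi)$ follows essentially the same route as the paper's: a single discretized Morse sweepout, the isoperimetric inequality for small enclosed volumes (Theorem \ref{T:Isoperimetric areas}), and a discrete intermediate-value argument to locate a slice with controlled volume. One caveat on the intermediate-value step: your sentence that the mass of $\partial\phi_i(x)-\partial\phi_i(y)$ ``controls $\vol(\phi_i(x)\triangle\phi_i(y))$ via the isoperimetric inequality'' is not correct as stated. The isoperimetric inequality bounds the mass of the \emph{minimal} filling $\F(\partial\phi_i(x)-\partial\phi_i(y))$, not the mass of the particular filling $[[\phi_i(x)]]-[[\phi_i(y)]]$; the volume of the symmetric difference of two consecutive slices can indeed jump (e.g.\ from a small ball to the complement of a small ball). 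What actually forces a well-controlled volume along the sweepout is the structure coming from Almgren's isomorphism, together with the isoperimetric choices $Q_j$ and the Constancy Theorem, which is the route taken in \cite[Theorem 3.9]{Zhou-Zhu17}. You flag this step as subtle and defer to that reference, which is acceptable, but the justification offered is the wrong one.

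The more serious gap is in your treatment of the last assertion. You argue that $C(S)$ is non-empty simply by extracting, from a min-max sequence, a varifold limit via the uniform mass bound. But this is essentially vacuous: the set of subsequential limits of a bounded sequence in $\V_n(M)$ is always non-empty, and nothing in your argument excludes the limit being the \emph{zero} varifold. The content of the assertion --- and what the downstream Theorem \ref{thm:main-body} actually needs --- is that the min-max limit is a \emph{nontrivial} varifold. The paper proves a quantitative lemma to this effect: for any $\bL>0$,
\[
\inf\{\M(\partial\Om):\ \Om\in\C(M),\ \Ac(\Om)\geq\bL\}>0.
\]
Its proof uses BV compactness, lower semicontinuity of mass, and the Constancy Theorem: if $\M(\partial\Om_i)\to 0$ then (up to subsequence) $\Om_i\to\Om_\infty$ with $\M(\partial\Om_\infty)=0$, so $\Om_\infty\in\{\emptyset,M\}$, whence $\Ac(\Om_i)\to\Ac(\Om_\infty)\in\{0,-\int_M h\}\le 0$, contradicting $\Ac(\Om_i)\geq\bL>0$. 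This is precisely where $\int_M h\geq 0$ enters, and it gives a uniform positive lower bound on $\M(\partial\phi_{i_j}(x_j))$ along the min-max sequence, hence a nonzero limit. Your write-up notices that $\int_M h\geq 0$ ``makes $C(S)$ behave well'' but never actually uses it, so this step in your proposal is missing.
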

%
\begin{remark}
\label{rmk:lower bound of Ac}
We first describe a heuristic argument using smooth sweepouts which will help to reveal the key idea. Let $C_0>0$ and $V_0>0$ to be the constants in Theorem \ref{T:Isoperimetric areas}, and fix $0<V\leq V_0$ such that $V^{\frac{-1}{n+1}} >2c/C_0$. Note that $V$ only depends on $c,C_0,V_0$. Then for any $\Om$ with $\vol(\Om)=V$, we have
\begin{equation}
\label{E:lower bound of Ac}
\Ac(\Om) \geq C_0 V^{\frac{n}{n+1}} - cV > cV >0.
\end{equation}

Now consider any smooth 1-parameter family $\{\Om_x: x\in [0, 1]\}$ satisfying $\Om_0=\emptyset$ and $\Om_1=M$. Since $\{\Om_x\}$ sweeps out $M$, there must exist some $x_0\in(0, 1)$ such that $\vol(\Om_{x_0})=V$, whence $\max_{x\in [0, 1]}\Ac(\Om_x) \geq cV >0$. Since this holds for any sweepout we then have $\bL^h(\Pi) \geq cV>0$. 
\end{remark}

\begin{proof}[Proof of Theorem \ref{T:existence of nontrivial sweepouts}]
Note that $\left| \int_\Omega h \right| \leq c\vol(\Omega)$. The proof for the existence of $\Pi \in \pi^{\sharp}_1\big(\C(M, \M), \{0\}\big)$ with $\bL^h(\Pi)>0$ proceeds as in the CMC setting \cite[Theorem 3.9]{Zhou-Zhu17}.

The remaining assertion that the critical sets are nontrivial follows from the following lemma.
\begin{lemma}
Given $\bL>0$, then 
\[ \inf\{\M(\partial\Om): \Om\in\C(M) \text{ and } \Ac(\Om)\geq \bL\}>0. \]
\end{lemma}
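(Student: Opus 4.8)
The plan is to prove the contrapositive: if a sequence $\Om_i \in \C(M)$ has $\M(\partial\Om_i) \to 0$, then $\Ac(\Om_i) \to 0$ as well, so such $\Om_i$ cannot all satisfy $\Ac(\Om_i) \geq \bL$ for a fixed $\bL > 0$. Since $\Ac(\Om) = \M(\partial\Om) - \int_\Om h$, the task reduces to showing that $\M(\partial\Om_i) \to 0$ forces $\int_{\Om_i} h \to 0$, and for this it suffices to show $\vol(\Om_i) \to 0$, because $\left|\int_{\Om_i} h\right| \leq c\,\vol(\Om_i)$ with $c = \sup|h|$.

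The key step is then the following dichotomy for Caccioppoli sets of small perimeter: there is $\ep_0 > 0$ (depending only on $M$) such that if $\M(\partial\Om) < \ep_0$, then either $\vol(\Om) \leq V_0$ or $\vol(M \setminus \Om) \leq V_0$, where $V_0$ is the constant from the isoperimetric inequality Theorem \ref{T:Isoperimetric areas}. Indeed, applying Theorem \ref{T:Isoperimetric areas} to whichever of $\Om$, $M\setminus\Om$ has volume at most $V_0$ (at least one does once $\ep_0$ is smaller than $C_0 (V_0/2)^{n/(n+1)}$, say, since $\vol(\Om) + \vol(M\setminus\Om) = \vol(M)$ and $\partial\Om = \partial(M\setminus\Om)$ as currents), we get $\min\{\vol(\Om), \vol(M\setminus\Om)\} \leq (\M(\partial\Om)/C_0)^{(n+1)/n}$, which tends to $0$ as $\M(\partial\Om) \to 0$. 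To make this precise one should note that $\vol(M\setminus\Om_i) \to 0$ is incompatible with $\Ac(\Om_i) \geq \bL > 0$: in that case $\int_{\Om_i} h = \int_M h - \int_{M\setminus\Om_i} h \to \int_M h \geq 0$, so $\Ac(\Om_i) = \M(\partial\Om_i) - \int_{\Om_i} h \to -\int_M h \leq 0$, again contradicting $\Ac(\Om_i) \geq \bL$. Hence along the sequence of $\Om_i$ with $\M(\partial\Om_i)\to 0$ and $\Ac(\Om_i)\geq\bL$, we must have $\vol(\Om_i) \leq V_0$ for all large $i$, and then the isoperimetric inequality gives $\vol(\Om_i) \to 0$, whence $\Ac(\Om_i) = \M(\partial\Om_i) - \int_{\Om_i} h \to 0$, contradicting $\Ac(\Om_i) \geq \bL > 0$.

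I expect the main (minor) obstacle to be bookkeeping the two alternatives $\vol(\Om_i)$ small versus $\vol(M\setminus\Om_i)$ small cleanly, and in particular using the hypothesis $\int_M h \geq 0$ in the right place to kill the second alternative — this is exactly the role that assumption plays throughout the paper. Everything else is a direct application of Theorem \ref{T:Isoperimetric areas} together with the elementary bound $\left|\int_\Om h\right| \leq c\,\vol(\Om)$. One could alternatively argue by contradiction with a single sequence realizing the infimum and extract the same conclusion; the structure is identical.
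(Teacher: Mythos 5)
Your overall strategy — contrapositive via controlling $\vol(\Om_i)$ and $\vol(M\setminus\Om_i)$, plus the bound $\left|\int_\Om h\right|\leq c\,\vol(\Om)$ and the hypothesis $\int_M h\geq 0$ — would work, and you correctly dispatch the case $\vol(M\setminus\Om_i)\to 0$ using $\int_M h\geq 0$. But the ``key step'' dichotomy, namely that $\M(\partial\Om)<\ep_0$ forces $\min\{\vol(\Om),\vol(M\setminus\Om)\}\leq V_0$, is not justified by what you write. Theorem~\ref{T:Isoperimetric areas} only gives a lower bound on $\M(\partial\Om)$ \emph{after} you already know $\vol(\Om)\leq V_0$; it says nothing about a set $\Om$ with $\vol(\Om)>V_0$ \emph{and} $\vol(M\setminus\Om)>V_0$. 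The parenthetical ``at least one does once $\ep_0<C_0(V_0/2)^{n/(n+1)}$, since the volumes sum to $\vol(M)$ and the boundaries agree'' is assuming what you need to prove: you are implicitly invoking the statement that the isoperimetric profile of $M$ is bounded below away from zero on the interval $[V_0,\vol(M)-V_0]$, which is true but is not Theorem~\ref{T:Isoperimetric areas} and needs its own argument (typically a compactness argument).

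The paper avoids the isoperimetric inequality entirely here and instead goes straight to compactness: extract a subsequence $\Om_i\to\Om_\infty$ weakly in $\C(M)$, so $\chi_{\Om_i}\to\chi_{\Om_\infty}$ in $L^1$; by lower semicontinuity of mass, $\M(\partial\Om_\infty)=0$; by the Constancy Theorem, $\Om_\infty=\emptyset$ or $\Om_\infty=M$; and then $\int_{\Om_i}h\to 0$ or $\int_{\Om_i}h\to\int_M h\geq 0$, so $\Ac(\Om_i)\to 0$ or $\Ac(\Om_i)\to-\int_M h\leq 0$, contradicting $\Ac(\Om_i)\geq\bL>0$. This is shorter and does not require the dichotomy. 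If you want to keep your structure, the cleanest fix is to \emph{prove} the dichotomy by exactly that compactness argument, but at that point you might as well run the paper's proof directly; alternatively, invoke a global isoperimetric inequality of the form $\M(\partial\Om)\geq C\min\{\vol(\Om),\vol(M\setminus\Om)\}^{n/(n+1)}$, which is a genuinely stronger statement than Theorem~\ref{T:Isoperimetric areas} and would need to be cited or established separately.
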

\textit{Proof}:
Suppose this is not true, then there exists a sequence $\{\Om_i\}\subset \C(M)$, with 
\begin{equation}
\label{E:lower bound of Ah}
\Ac(\Om_i)=\M(\partial\Om_i)-\int_{\Om_i}h\geq \bL>0,
\end{equation} 
but
\[ \lim_{i\to\infty}\M(\partial\Om_i)=0. \]
Up to a subsequence, we can assume that $\lim_{i\to\infty}\Om_i=\Om_\infty$ weakly in the sense of Caccioppoli sets for some $\Om_\infty\in\C(M)$. In particular, the characteristic functions $\chi_{\Om_i}\to \chi_{\Om_\infty}$ in $L^1(M)$. By the lower semi-continuity, we have
\[ \M(\partial\Om_\infty)\leq \lim_{i\to\infty}\M(\partial\Om_i)=0. \]
By the Constancy Theorem, we must have $\Om_\infty=\emptyset$ or $\Om_\infty=M$.

Since we assumed that $\int_Mh\geq 0$, either case then yields a contradiction upon taking $i\to\infty$ in (\ref{E:lower bound of Ah}). This completes the proof of the lemma and hence the proof of Theorem \ref{T:existence of nontrivial sweepouts}.
\end{proof}

\section{Tightening}
\label{S:tightening}

In this section, we recall the tightening process, which we constructed to study the $\mathcal{A}^c$ functional (defined like $\mathcal{A}^h$ but with the constant function $c$) as part of our min-max theory for constant mean curvature surfaces \cite[\S 4]{Zhou-Zhu17}. The same process can be applied to the $\Ac$ functional, and we will prove that after applying the tightening map to a critical sequence, every element in the critical set has uniformly bounded first variation. Note that the tightening process is adapted from those in \cite[\S 4]{CD03} and \cite[\S 4.3]{P81}.

\subsection{Review of constructions in \cite[\S 4]{Zhou-Zhu17}}. 
\label{SS: Review of constructions in ZZ}

We fist recall several key ingredients obtained in \cite[\S 4]{Zhou-Zhu17} for the tightening process. In this section, we always let $c\equiv \sup_M|h|$. Given $L>0$, consider the set of varifolds in $\V_n(M)$ with $2L$-bounded mass: $A^L=\{V\in \V_n(M):\ \|V\|(M)\leq 2L\}$. 
Denote
\[ A^c_\infty=\big\{V\in A^L: |\de V(X)|\leq c \int|X| d\mu_V, \text{ for any }X\in\X(M) \big\}. \]
Given $V\in A^L$, we denote
\[ \ga=\ga(V)= \mF(V, A^c_\infty). \]
Given $X\in \X(M)$, we use $\Phi_X: \R^+\times M\rightarrow M$ to denote the one parameter group of diffeomorphisms generated by $X$.

We have constructed in \cite[\S 4]{Zhou-Zhu17}:
\begin{enumerate}[label=(\roman*)]
\item a map $X: A^L \rightarrow \X(M)$, which is continuous with respect to the $C^1$ topology on $\X(M)$;
\item two continuous functions $g:\R^+\rightarrow \R^+$ and $\rho: \R^+\rightarrow \R^+$, such that $\rho(0)=0$ and
\[ \de W(X(V))+c\int |X(V)|d\mu_W \leq -g\big(\ga(V)\big),\quad \text{ whenever } W\in A^L, \mF(W, V)\leq \rho\big(\ga(V)\big); \]
\item a continuous time function $T: [0, \infty) \rightarrow [0, \infty)$, such that 
         \begin{itemize}
         \item $\lim_{t\rightarrow 0}T(t)=0$, and $T(t)>0$ if $t\neq 0$;

         \item For any $V\in A^L$, denote $\Phi_V=\Phi_{X(V)}$, and $V_t=\big(\Phi_V(t)\big)_{\#}V$, then $\mF(V_t, V)<\rho(\ga)$ for all $0\leq t\leq T(\ga)$.
         \end{itemize}
\end{enumerate}

Our goal is to show that given $\Om\in\C(M)$ with $\partial\Om\in A^L$, we can deform $\Om$ by $\Phi_{|\partial \Om|}(t)$ to get a 1-parameter family $\Om_t=\Phi_{|\partial\Om|}(t)(\Om) \in\C(M)$, so that the $\Ac$ functional of $\Om_t$ for some $t>0$ can be deformed down by a fixed amount depending only on $\ga(|\partial\Om|)=\mF(|\partial\Om|, A^c_\infty)$. 

In particular, given $V\in A^L\backslash A^c_\infty$, denote
\[ \Psi_V(t, \cdot)=\Phi_V\big(T(\ga)t, \cdot\big),\quad \textrm{for } t\in[0, 1], \]
and $L: \R^+\rightarrow \R^+$, with $L(\ga)=T(\ga)g(\ga)$; then $L(0)=0$ and $L(\ga)>0$ if $\ga>0$. We can deform $V$ through the family  $\left\{V_t=\big(\Psi_V(t)\big)_{\#}V: t\in [0, 1]\right\} \subset U_{\rho(\gamma)}(V)$, so that
\begin{enumerate}
\item The map $(t, V)\rightarrow V_t$ is continuous under the $\mF$-metric;

\item When $V=\partial \Om$, $\Om\in\C(M)$, $\ga=\mF(|\partial\Om|, A^c_\infty)>0$, we have by (\ref{E: 1st variation for Ac})
\begin{equation}
\label{E: decrease Ac by isotopy}
\Ac(\Om_1)-\Ac(\Om) \leq \int_{0}^{T(\ga)}[\de \Ac|_{\Om_t}] (X(|\partial\Om)|) dt\leq -T(\ga)g(\ga)= -L(\ga)<0.
\end{equation}
(Note that $\de \Ac_{\Om}(X)\leq \de|\partial\Om|(X)+c\int|X|d\mu_{\partial\Om}$ by (\ref{E: 1st variation for Ac}).)
\end{enumerate}
Finally note that the flow $\Psi_V(t, \cdot)$ is generated by the vector field 
\begin{equation}
\label{E:tiX}
\ti{X}(V)=T(\ga)X(V).
\end{equation}

\subsection{Deforming sweepouts by the tightening map}
\label{SS:Deforming sweepouts by the tightening map}

We now apply our tightening map to the critical sequence provided by Lemma \ref{L:critical sequence}. As in our min-max theory for the CMC setting, the conclusion is that varifolds in the critical set have $c$-bounded first variation, where now $c= \sup |h|$. Indeed, the proof proceeds essentially unchanged, with the $\mathcal{A}^c$ functional replaced by $\mathcal{A}^h$. %

\begin{proposition}[Tightening]
\label{P:tightening}
Let $\Pi \in \pi^{\sharp}_1\big(\C(M, \M), \{0\}\big)$, and assume $\bL^h(\Pi)>0$. For any critical sequence $S^*$ for $\Pi$, there exists another critical sequence $S$ for $\Pi$ such that $C(S) \subset C(S^*)$ and each $V\in C(S)$ has $c$-bounded first variation, $c=\sup |h|$.
\end{proposition}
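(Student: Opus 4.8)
The plan is to follow the Almgren--Pitts--Colding--De~Lellis tightening scheme, as adapted to the $\mathcal{A}^c$ setting in \cite[\S 4]{Zhou-Zhu17}, observing that every step goes through verbatim with $\mathcal{A}^c$ replaced by $\mathcal{A}^h$ because the only structural input used is the first variation bound $\de\mathcal{A}^h|_\Om(X)\leq \de|\partial\Om|(X)+c\int|X|\,d\mu_{\partial\Om}$ with $c=\sup|h|$, which holds by (\ref{E: 1st variation for Ac}). First I would fix the mass bound: set $L = \bL^h(\Pi)+1$ (say), and observe that for $i$ large every slice $\partial\phi_i(x)$ of the given critical sequence $S^* = \{\phi_i^*\}$ lies in $A^L$, since $\sup_i\{\M(\partial\phi_i^*(x))\}<\infty$ by the definition of a $(1,\M)$-homotopy sequence and we may pass to a tail. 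This lets us invoke the constructions (i)--(iii) and the resulting deformation family $\Psi_V(t,\cdot)$ recalled in Section~\ref{SS: Review of constructions in ZZ}.

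The main construction is to define, for each $i$, a new map $\phi_i : I(1,k_i)_0 \to (\C(M),\{0\})$ by applying the tightening deformation to each slice: $\phi_i(x) = \Psi_{|\partial\phi_i^*(x)|}(1,\cdot)\big(\phi_i^*(x)\big)$, i.e. we push $\phi_i^*(x)$ forward by the time-one flow of the vector field $\ti X(|\partial\phi_i^*(x)|)$. The continuity of $V\mapsto X(V)$ in the $C^1$ topology, together with the continuity of $T$, $g$, $\rho$ and of $(t,V)\mapsto V_t$ under the $\mathcal{F}$-metric, is precisely what is needed to check that $\phi_i$ is again a map of small fineness $1$-homotopic to $\phi_i^*$ (one interpolates along the flow, using the discretization/interpolation machinery of \cite{Zhou15b} as in \cite{Zhou-Zhu17}); taking the diagonal over $i$ yields a $(1,\M)$-homotopy sequence $S = \{\phi_i\}$ homotopic to $S^*$, hence $S\in\Pi$. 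The key quantitative point is (\ref{E: decrease Ac by isotopy}): for any slice with $\ga := \mathcal{F}(|\partial\phi_i^*(x)|, A^c_\infty)>0$ we have $\mathcal{A}^h(\phi_i(x)) - \mathcal{A}^h(\phi_i^*(x)) \leq -L(\ga) < 0$, while for slices already in $A^c_\infty$ the deformation is trivial; in either case $\mathcal{A}^h(\phi_i(x)) \leq \mathcal{A}^h(\phi_i^*(x))$, so $\bL^h(S)\leq \bL^h(S^*) = \bL^h(\Pi)$, forcing $\bL^h(S) = \bL^h(\Pi)$, i.e. $S$ is critical.

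It remains to show $C(S)\subset C(S^*)$ and that every $V\in C(S)$ has $c$-bounded first variation. For the inclusion: given a min-max sequence $\partial\phi_{i_j}(x_j)$ for $S$ with $|\partial\phi_{i_j}(x_j)|\to V$ and $\mathcal{A}^h(\phi_{i_j}(x_j))\to \bL^h(\Pi)$, the inequality $\mathcal{A}^h(\phi_{i_j}^*(x_j)) \geq \mathcal{A}^h(\phi_{i_j}(x_j))$ together with $\sup_x \mathcal{A}^h(\phi_{i_j}^*(x))\to \bL^h(\Pi)$ forces $\mathcal{A}^h(\phi_{i_j}^*(x_j))\to\bL^h(\Pi)$; since the tightening moves each slice by an $\mathcal{F}$-amount $\leq \rho(\ga)\to 0$ as $\ga\to 0$, and since $\mathcal{A}^h(\phi_{i_j}(x_j)) - \mathcal{A}^h(\phi_{i_j}^*(x_j)) \leq -L(\ga_j)\to 0$ combined with the lower bound structure of $L$ forces $\ga_j\to 0$, the varifolds $|\partial\phi_{i_j}^*(x_j)|$ also converge to $V$ (up to subsequence, using $\mathcal{F}(|\partial\phi_{i_j}(x_j)|,|\partial\phi_{i_j}^*(x_j)|)\to 0$), so $V\in C(S^*)$. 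Finally, $\ga_j = \mathcal{F}(|\partial\phi_{i_j}^*(x_j)|, A^c_\infty)\to 0$ and $|\partial\phi_{i_j}^*(x_j)|\to V$ give $\mathcal{F}(V, A^c_\infty) = 0$; since $A^c_\infty$ is closed in the $\mathcal{F}$-metric, $V\in A^c_\infty$, i.e. $V$ has $c$-bounded first variation. The main obstacle I expect is bookkeeping the interpolation estimate that turns the continuous tightening flow on slices into a genuine discrete $1$-homotopy of small fineness between $\phi_i^*$ and $\phi_i$ uniformly in $i$ — this is where the continuity of the map $V\mapsto X(V)$ in $C^1$ and the Almgren interpolation machinery are essential; but since this is identical to \cite[\S 4.3]{Zhou-Zhu17} with $\mathcal{A}^c$ replaced by $\mathcal{A}^h$, it requires no new ideas.
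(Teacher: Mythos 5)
Your proposal is correct and follows essentially the same route as the paper: tighten each slice by the time-one flow of the gradient-like vector field, observe the $\mathcal{A}^h$-decrease estimate (\ref{E: decrease Ac by isotopy}), and extract $C(S)\subset C(S^*)$ and the $c$-bounded first variation from $\gamma_j\to 0$ via continuity of $L$ and $\rho$, with the technical interpolation step (needed because the slice-dependent isotopies destroy fineness control) deferred to the machinery of \cite{Zhou15b} and \cite[\S 4.3]{Zhou-Zhu17}. The only imprecision is that you write $\phi_i$ for what the paper calls $\phi_i^1$ — the actual $\phi_i$ lives on a refined grid $I(1,l_i)_0$ with $l_i>k_i$, and your bookkeeping (e.g.\ the comparison $\mathcal{A}^h(\phi_{i_j}^*(x_j))\geq\mathcal{A}^h(\phi_{i_j}(x_j))$ at the same $x_j$) really requires the properties (c), (d) of the paper's Claim 2 relating $\phi_i$ to $\phi_i^1$ across adjacent cells; but since you flag exactly this as the bookkeeping to be done, the argument is sound.
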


\begin{proof}

Take $S^*=\{\phi_i^*\}$, where $\phi^*_i: I(1, k_i)_{0}\to\big(\C(M), \{0\}\big)$, and $\phi^*_i$ is $1$-homotopic to $\phi^*_{i+1}$ in $\big(\C(M), \{0\}\big)$ with fineness $\de_i \searrow 0$. Let $\Xi_i: I(1, k_i)_0\times [0, 1]\to \C(M)$ be defined as
\[ \Xi_i(x, t)= \Psi_{|\partial \phi^*_i(x)|}(t) \big( \phi^*_i(x)\big) . \]

Denote $\phi_i^t(\cdot)=\Xi_i(\cdot, t)$. First, we first recall the following fact, the proof of which follows by a standard argument using (\ref{E: decrease Ac by isotopy}); (see \cite[\S 4.4 Claim 1]{Zhou-Zhu17}):

\vspace{0.3cm}
\textit{Claim 1: if $\lim_{i\to\infty}\Ac(\phi^1_i(x_i))=\bL^h(\Pi)$, then (up to relabeling) there is a subsequence $\{\phi^1_i(x_i)\}$ converging (as varifolds) to a varifold in $C(S^*)$ of $c$-bounded first variation.}


\vspace{0.3cm}
\textit{Proof of Claim 1:} By (\ref{E: decrease Ac by isotopy}),
\begin{equation}
\Ac(\phi_i^1(x_i))-\Ac(\phi^*_i(x_i)) = -L(\ga_i),
\end{equation}
where $\ga_i=\mF(|\partial \phi^*_i(x_i)|, A^c_\infty)$. Therefore,
\[ \bL^h(\Pi) = \lim \Ac(\phi_i^1(x_i)) = \lim \Ac(\phi^*_i(x_i)) - L(\lim \ga_i) \leq \bL^h(\Pi) - L(\lim\ga_i), \] 
so actually we must have $\lim \ga_i=0$ and this implies that $\lim|\partial \phi^*_i(x_i)| \in A^c_\infty$. Moreover, by our construction of the tightening map (see property (iii) in \S \ref{SS: Review of constructions in ZZ}), each $|\partial\phi^1_i(x_i)|$ had to be $\rho(\ga_i)$-close to $|\partial\phi^*_i(x_i)|$ under the $\mF$-metric, therefore \[\lim|\partial \phi^1_i(x_i)|=\lim|\partial \phi^*_i(x_i)| \in A^c_\infty\cap C(S^*),\] and this finishes the proof of the claim.\\


Heuristically, one would like to set $\phi_i=\phi_i^1$ as the desired sequence, but since the isotopies $\Psi_{|\partial \phi^*_i(x)|}$ depend on $x$, the fineness of $\{\phi^1_i\}$ could be large even if $\mf(\phi^*_i)$ is small. Thus we need to interpolate $\phi^1_i$ to get the desired $\phi_i$, but we need to make sure the values of $\phi_i$ after interpolation are $\mF$-close to those of $\phi^1_i$. The idea is to extend $\phi^1_i$ to a \textit{piecewise} continuous (with respect to the $\mF$-metric) map on $I$ and then apply the discretization result in \cite[Theorem 5.1]{Zhou15b}. 

Similar difficulties appeared in the same way in \cite[\S 15]{MN14}. 
Instead, we use another interpolation method in Claim 2 as developed in \cite{Zhou-Zhu17}. For completeness, we provide detailed proof in Appendix \ref{A:proof of claim 2}.

\vspace{0.3cm}
\textit{Claim 2: there exist integers $l_i>k_i$ and maps $\phi_i: I(1, l_i)_0\to (\C(M), \{0\})$ for each $i$, such that $S=\{\phi_i\}$ is homotopic to $S^*$, and
\begin{itemize}
\item[(a)] $\phi_i^1=\phi_i\circ \n(l_i, k_i)$ on $I(1, k_i)_0$;
\item[(b)] $\mf(\phi_i)\to 0$, as $i\to \infty$;
\item[(c)]  $\Ac(\phi_i(x))-\max\{\Ac(\phi^1_i(y)): \al\in I(1, k_i)_1, x, y\in \al\}\to 0$, uniformly in $x\in I(1, l_i)_0$ as $i\rightarrow \infty$. 
\item[(d)] $\max\{\mF(\partial\phi_i(x), \partial \phi^1_i(y)): \al\in I(1, k_i)_1, x, y\in \al\} \to 0$, as $i\rightarrow \infty$.
\end{itemize}}

\vspace{0.3cm}
In particular, $S$ is a valid sequence in $\Pi$, and we now check that it satisfies the requirements of the proposition. First, property (c) and the fact that $S^*$ is a critical sequence directly imply that $S$ is also a critical sequence. It remains to show that every element in $C(S)$ must lie in $C(S^*)$ and have $c$-bounded first variation. Given $V\in C(S)$, one can find a subsequence (without relabeling) $\{\phi_i(\overline{x}_i): \overline{x}_i\in I(1, l_i)_0\}\subset \C(M)$, such that $V=\lim |\partial \phi_i(\overline{x}_i)|$ as varifolds, and
\[ \lim\Ac(\phi_i(\overline{x}_i))=\bL^h(\Pi). \]
We will need to first consider $\phi_i(x_i)=\phi_i^1(x_i)$, where $x_i$ is the nearest point to $\overline{x}_i$ in $I(1, k_i)_0$. By (c) and (d), we have $\lim\Ac(\phi^1_i(x_i))=\bL^h(\Pi)$ and also $\lim |\partial \phi_i(\overline{x}_i)|=\lim |\partial \phi^1_i(x_i)|$ as varifolds. Then by Claim 1, we conclude that $V\in A^c_\infty\cap C(S^*)$. This completes the proof. 
\end{proof}

\section{$h$-Almost minimizing}
\label{S:c-Almost minimizing}

In this section, we introduce the notion of $h$-almost minimizing varifolds, and show the existence of such a varifold from min-max construction. We also construct $h$-replacements for any $h$-almost minimizing varifold. Using the properties of these replacements, we show that all blowups of $h$-almost minimizing varifolds are regular. As an easy consequence, the tangent cones of such varifolds are always planar. 

\begin{definition}[$h$-almost minimizing varifolds]
\label{D:c-am-varifolds}
Let $\nu$ be the $\F$ or $\M$-norm, or the $\mF$-metric. For any given $\ep, \de>0$ and an open subset $U\subset M$, we define $\sA^h(U; \ep, \de; \nu)$ to be the set of all $\Om\in\C(M)$ such that if $\Om=\Om_0, \Om_1, \Om_2, \cdots, \Om_m\in\C(M)$ is a sequence with:
\begin{itemize}
\item[(i)] $\spt(\Om_i-\Om)\subset U$;
\item[(ii)] $\nu(\partial\Om_{i+1}, \partial\Om_i)\leq \de$;
\item[(iii)] $\Ac(\Om_i)\leq \Ac(\Om)+\de$, for $i=1, \cdots, m$,
\end{itemize}
then $\Ac(\Om_m)\geq \Ac(\Om)-\ep$.

We say that a varifold $V\in\V_n(M)$ is {\em $h$-almost minimizing in $U$} if there exist sequences $\ep_i \to 0$, $\de_i \to 0$, and $\Om_i\in \sA^h(U; \ep_i, \de_i; \F)$, such that $\mF(|\partial\Om_i|, V)\leq \ep_i$.
\end{definition}

A simple consequence of the definition is that $h$-almost minimizing implies bounded first variation:
\begin{lemma}
\label{L:c-am implies c-bd-first-variation}
Let $V\in\V_n(M)$ be $h$-almost minimizing in $U$, then $V$ has $c$-bounded first variation in $U$, where $c= \sup|h|$. 
\end{lemma}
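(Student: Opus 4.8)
The plan is to argue by contradiction, adapting the deformation argument used in the minimal and CMC settings (cf.\ \cite[\S 4]{CD03}, \cite[4.3]{P81}, and \cite[\S 4]{Zhou-Zhu17}); the only new feature is that the $h$-volume term in $\Ac$ contributes a first-variation term bounded in absolute value by $c\int|X|\,d\mu_{\partial\Om}$, which is exactly what turns ``stationary'' into ``$c$-bounded first variation.'' Let $\ep_i\to 0$, $\de_i\to 0$ and $\Om_i\in\sA^h(U;\ep_i,\de_i;\F)$ be as in Definition \ref{D:c-am-varifolds}, so that $\mF(|\partial\Om_i|,V)\le\ep_i$; since $|\partial\Om_i|\to V$ as varifolds we have $\sup_i\|\partial\Om_i\|(M)<\infty$. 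Suppose $V$ does not have $c$-bounded first variation in $U$: then there is $X\in\X(U)$ with, say, $\de V(X)>c\int|X|\,d\mu_V$ (the case $\de V(X)<-c\int|X|\,d\mu_V$ is handled by replacing $X$ with $-X$). By continuity of the first variation under varifold convergence, $\de|\partial\Om_i|(X)\to\de V(X)$ and $\int|X|\,d\mu_{\partial\Om_i}\to\int|X|\,d\mu_V$; combining this with (\ref{E: 1st variation for Ac}) and the elementary bound $\de\Ac|_{\Om}(X)\ge\de|\partial\Om|(X)-c\int|X|\,d\mu_{\partial\Om}$ produces some $\eta>0$ with $\de\Ac|_{\Om_i}(-X)\le-\eta$ for all large $i$.

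Next I would establish a uniform descent estimate along the flow $\Phi(t):=\Phi_{-X}(t)$ of $-X$. By the first variation formula (\ref{E: 1st variation for Ac}), $\tfrac{d}{dt}\Ac(\Phi(t)(\Om_i))=\de\Ac|_{\Phi(t)(\Om_i)}(-X)$; differentiating once more and using that $\|X\|_{C^2}$, $\|h\|_{C^1}$, $\vol(M)$ and $\sup_i\|\partial\Om_i\|(M)$ are all bounded, one gets a constant $C_0$ independent of $i$ with
\[ \Ac(\Phi(t)(\Om_i))\le\Ac(\Om_i)+t\,\de\Ac|_{\Om_i}(-X)+C_0t^2 \qquad (0\le t\le 1). \]
Hence, setting $T_0:=\min\{1,\eta/(2C_0)\}$, we obtain $\Ac(\Phi(t)(\Om_i))\le\Ac(\Om_i)-\tfrac{\eta}{2}t\le\Ac(\Om_i)$ for $0\le t\le T_0$, and in particular $\Ac(\Phi(T_0)(\Om_i))\le\Ac(\Om_i)-\beta$ with $\beta:=\eta T_0/2>0$ independent of $i$.

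Finally I would discretize this flow to contradict the $h$-almost minimizing property. Since $X$ is compactly supported in $U$, each $\Phi(t)$ is the identity outside a compact subset of $U$, so $\Phi(t)(\Om_i)\triangle\Om_i$ is contained in $U$ for every $t$; and by a standard homotopy/interpolation estimate, $\F\big(\partial\Phi(s)(\Om_i)-\partial\Om_i\big)\le C\|X\|_{C^1}\,s\,\|\partial\Om_i\|(M)$, which is uniformly controlled. Thus, for each large $i$, choosing $m=m_i$ large enough and setting $\Om_i^{(k)}:=\Phi(kT_0/m)(\Om_i)$ for $k=0,\dots,m$, the chain $\Om_i=\Om_i^{(0)},\dots,\Om_i^{(m)}$ satisfies conditions (i)--(iii) of Definition \ref{D:c-am-varifolds}: $\spt(\Om_i^{(k)}-\Om_i)\subset U$; $\F(\partial\Om_i^{(k+1)},\partial\Om_i^{(k)})\le\de_i$ once $m$ is large; and $\Ac(\Om_i^{(k)})\le\Ac(\Om_i)\le\Ac(\Om_i)+\de_i$ by the monotonicity above. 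But $\Ac(\Om_i^{(m)})\le\Ac(\Om_i)-\beta<\Ac(\Om_i)-\ep_i$ as soon as $\ep_i<\beta$, contradicting $\Om_i\in\sA^h(U;\ep_i,\de_i;\F)$. This contradiction would prove the lemma.

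I expect the only delicate point to be the uniformity in $i$ of the constants $T_0$ and $\beta$, and of the flat-distance estimate in the discretization step: this is precisely where the uniform mass bound $\sup_i\|\partial\Om_i\|(M)<\infty$ enters, and it is what allows a single deformation, of fixed duration and fixed $\Ac$-decrease, to work for all large $i$ simultaneously. Beyond that the argument is routine and runs exactly as in the CMC case \cite[\S 4]{Zhou-Zhu17} with $\mathcal{A}^c$ replaced by $\Ac$, consistent with the lemma being ``a simple consequence of the definition.''
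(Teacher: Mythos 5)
Your proof is correct and follows essentially the same contradiction argument as the paper: take a vector field $X$ along which the $c$-bound for $\delta V$ fails, flow for a uniform short time to strictly decrease $\Ac$, and discretize the flow to produce a chain contradicting Definition \ref{D:c-am-varifolds}. The only cosmetic difference is that you establish the uniform-time descent via a second-derivative/Taylor bound on $t\mapsto\Ac(\Phi(t)(\Om_i))$, whereas the paper instead keeps the flowed set $\mF$-close to $V$ so that $\de\Ac|_{\Om_t}(X)$ stays quantitatively negative throughout the flow.
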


Following similar arguments to \cite[4.10]{P81}, we can show that every sequence which has been pulled tight using Proposition \ref{P:tightening} has a critical limit which is $h$-almost minimizing on small annuli:

\begin{definition}
\label{D:am-annuli}
A varifold $V \in \V_n(M)$ is said to be \emph{$h$-almost minimizing in small annuli} if for each $p\in M$, there exists $r_{am}(p) >0$ such that $V$ is $h$-almost minimizing in $A_{s, r}(p)\cap M$ for all $0<s<r\leq r_{am}(p)$, where $A_{s, r}(p)=B_r(p)\backslash B_s(p)$.
\end{definition}

\begin{theorem}[Existence of $h$-almost minimizing varifold] 
\label{T: existence of almost minimizing varifold}
Let $\Pi \in \pi^{\sharp}_1\big(\C(M, \M), \{0\}\big)$, $c=\sup |h|$ and assume that $\bL^h(\Pi)>0$. There exists a nontrivial $V\in\V_n(M)$, such that
\begin{itemize}
\item[(i)] $V\in C(S)$ for some critical sequence $S$ of $\Pi$;
\item[(ii)] $V$ has $c$-bounded first variation;
\item[(iii)] $V$ is $h$-almost minimizing in small annuli. 
\end{itemize}
\end{theorem}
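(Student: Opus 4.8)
The plan is to follow the combinatorial scheme of Pitts \cite[4.10]{P81} (see also \cite[\S 5]{Zhou-Zhu17} and \cite{CD03}), with the only change being that the area functional is replaced throughout by $\Ac$; the point is that every estimate needed in that argument survives, since the $h$-volume term $\int_\Om h$ is controlled by $c\,\vol(\Om)$ and is continuous in the $\mF$-metric. First I would apply Proposition \ref{P:tightening} to replace the critical sequence coming from Theorem \ref{T:existence of nontrivial sweepouts} by a tightened critical sequence $S=\{\phi_i\}\in\Pi$, so that $C(S)$ is non-empty, its elements are nontrivial (by the mass lower bound in the proof of Theorem \ref{T:existence of nontrivial sweepouts}), and every $V\in C(S)$ already has $c$-bounded first variation. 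Thus (i) and (ii) hold for any $V\in C(S)$, and the whole content is to find \emph{some} $V\in C(S)$ which is in addition $h$-almost minimizing in small annuli.

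I would argue by contradiction, assuming that no $V\in C(S)$ is $h$-almost minimizing in small annuli. For each $V\in C(S)$ this produces a ``bad'' point $p(V)$: for every $r>0$ there are $0<s<t\le r$ such that $V$ fails to be $h$-almost minimizing in $A_{s,t}(p(V))$. Iterating this failure at successively smaller scales --- each time passing into the inner ball of the previous bad annulus --- yields, for each $V$, infinitely many pairwise disjoint concentric annuli around $p(V)$ in each of which $V$ fails the $h$-almost minimizing property; I would fix a large enough finite collection of these. Next, using the $\mF$-compactness of $C(S)$ together with the fact that admitting an $\Ac$-decreasing competitor chain inside a fixed open set is an ``open'' condition on $V$, I would make these choices locally uniform: there is a finite cover of $C(S)$ by $\mF$-balls $U_1,\dots,U_m$, a number $\ep_0>0$, and for each $l$ a fixed family of pairwise disjoint annuli $\mathfrak{A}^{(l)}=\{\mathfrak{A}^{(l)}_1,\dots,\mathfrak{A}^{(l)}_N\}$, such that whenever $|\partial\Om|\in U_l$ one can deform $\Om$, keeping it fixed outside some $\mathfrak{A}^{(l)}_j$, so as to lower $\Ac(\Om)$ by at least $\ep_0$ through intermediate slices that stay $\de$-competitive for small $\de$.

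Finally I would run the ``pull-tight on annuli'' step. For $i$ large, every near-maximal slice $\phi_i(x)$ has $|\partial\phi_i(x)|$ lying in one of the balls $U_l$, so I would modify $\phi_i$, cell by cell on the relevant subcomplex of $I(1,k_i)$, by inserting the annular deformations produced above; because the annuli in each $\mathfrak{A}^{(l)}$ are pairwise disjoint, deformations attached to different cells (which may use different annuli) can be carried out independently and patched continuously, and since nothing is changed outside the annuli the endpoint condition $\phi_i([0]),\phi_i([1])\in\{\emptyset,M\}$ is preserved, so the modified sequence remains in $\Pi$. Discretizing the modified family to restore small fineness --- exactly as in Claim 2 of the proof of Proposition \ref{P:tightening}, via \cite[Theorem 5.1]{Zhou15b} --- introduces an error that is $<\tfrac12\ep_0$ for $i$ large, so the resulting $S'\in\Pi$ satisfies $\bL^h(S')\le \bL^h(\Pi)-\tfrac12\ep_0<\bL^h(\Pi)$, contradicting the definition of $\bL^h(\Pi)$. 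Hence some $V\in C(S)$ is $h$-almost minimizing in small annuli, giving (iii). I expect the main obstacle to be precisely this last combinatorial/interpolation step: the failure of $h$-almost minimizing supplies only \emph{discrete} competitor chains near isolated slices, and turning these into an admissible $(1,\M)$-homotopy --- continuous in the parameter, with controlled fineness, and switching coherently among the disjoint annuli as the bad annulus moves with $x$ --- is the technical heart of Pitts' argument. Verifying that it transplants to $\Ac$ reduces, as in \cite{Zhou-Zhu17}, to the already-established facts that $|\int_\Om h|\le c\,\vol(\Om)$ and that this term is $\mF$-continuous, so none of the flat-topology manipulations spoil the gain in $\Ac$.
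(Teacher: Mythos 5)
Your proposal follows exactly the paper's route: tighten via Proposition~\ref{P:tightening} so (i) and (ii) are automatic, then argue by contradiction that if no $V\in C(S)$ were $h$-almost minimizing in small annuli, Pitts' combinatorial machinery \cite[4.10]{P81} (with $\M$ replaced by $\Ac$, controlled because $|\int_\Om h|\le c\,\vol(\Om)$) would produce a competitor $\ti S\in\Pi$ with $\bL^h(\ti S)<\bL^h(\Pi)$. The only presentational difference is that the paper explicitly invokes the topology-equivalence Proposition~\ref{P:def-equiv} to pass from the $\F$-norm definition to the $\M$-norm version before running Pitts' argument, a step you fold implicitly into the claim that the competitor-chain condition is ``open'' in $V$; making that invocation explicit would align your sketch exactly with theirs.
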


In fact, one may show that for any critical sequence $S$ of $\Pi$, there exists a varifold $V\in C(S)$, so that for any small annulus $A$, there exists a min-max sequence $\{\Omega_i\}$ such that $\Omega_i$ are eventually in $\sA^h(A ; \epsilon_i, \delta_i; \M)$ for $\epsilon_i,\delta_i\to 0$, and have the nontrivial varifold limit $|\partial \Omega_i| \rightarrow V$. The main idea is that if there is no such sequence, the procedures described in \cite[4.10]{P81} allows one to deform $S$ homotopically to a new sequence $\tilde{S}$ with $\bL^h(\tilde{S})<\bL^h(S)$, which contradicts the criticality of $S$.

We omit the proof of Theorem \ref{T: existence of almost minimizing varifold} except to point out the following equivalence result among several almost minimizing concepts using the three different topologies. In particular, it implies that we can work with the $\M$-norm at the expense of shrinking the open subset $U \subset M$. 

\begin{proposition}
\label{P:def-equiv}
Given $V\in \V_n(M)$, the following statements satisfy $(a)\Longrightarrow (b)\Longrightarrow (c)\Longrightarrow (d)$:
\begin{itemize}
\item[$(a)$]  $V$ is $h$-almost minimizing in $U$;
\item[$(b)$]  For any $\ep>0$, there exists $\de>0$ and $\Om \, \in \, \sA^h(U; \ep, \de; \mF)$ such that $\mF(V, |\partial \Om|)<\ep$;
\item[$(c)$]  For any $\ep>0$, there exists $\de>0$ and $\Om \in \sA^h(U; \ep, \de; \M)$ such that $\mF(V, |\partial\Om|)<\ep$;
\item[$(d)$] $V$ is $h$-almost minimizing in $W$ for any relatively open subset $W \subset \subset U$.
\end{itemize}
\end{proposition}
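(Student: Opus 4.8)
\textbf{Proof proposal for Proposition \ref{P:def-equiv}.}

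The plan is to follow the structure of the corresponding equivalence result in the CMC setting (\cite[Proposition 5.3 or similar]{Zhou-Zhu17}), since the only change is replacing the $\mathcal{A}^c$ functional by $\mathcal{A}^h$, and the key quantitative inputs — the first variation formula (\ref{E: 1st variation for Ac}) and the bound $|\delta\mathcal{A}^h|_\Omega(X)| \leq |\delta|\partial\Omega|(X)| + c\int |X| d\mu_{\partial\Omega}$ — behave identically. The implication $(a)\Longrightarrow(b)$ is essentially immediate: the $\mathcal{F}$-metric dominates (a constant multiple of) the flat norm $\mathcal{F}$ on the relevant bounded-mass sets, so a competitor chain that is admissible for the $\mathcal{F}$-metric version (small $\mathcal{F}$-steps) is, after controlling masses, admissible for the flat-norm version; more precisely one shows $\Omega \in \sA^h(U;\epsilon,\delta;\mathcal{F}) \Rightarrow \Omega \in \sA^h(U;\epsilon,\delta';\mathcal{F})$ for a suitable $\delta'$, which is the standard observation that closeness in $\mathcal{F}$ implies closeness in the flat topology among Caccioppoli sets. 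Actually $(a)\Rightarrow(b)$ is direct since the defining sequences for $h$-almost minimizing use the $\mathcal{F}$-norm, and one just notes that $\mathcal{F}$-closeness of $|\partial\Omega_i|$ to $V$ combined with the almost-minimizing property with respect to $\mathcal{F}$ gives a single $\Omega$ in the desired class for each $\epsilon$.

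For $(b)\Longrightarrow(c)$, the point is to upgrade from the $\mathcal{F}$-metric to the mass norm $\mathbf{M}$. Here one cannot simply compare norms, since mass is not controlled by $\mathcal{F}$. Instead I would use the interpolation/discretization machinery: given a competitor chain $\Omega = \Omega_0, \dots, \Omega_m$ with small $\mathbf{M}$-steps and $\mathcal{A}^h(\Omega_i) \leq \mathcal{A}^h(\Omega) + \delta$, one constructs from it a chain with small $\mathcal{F}$-steps (which is automatic since $\mathcal{F} \lesssim \mathbf{M}$ up to mass bounds) to which the hypothesis $(b)$ applies — but the subtlety is going the other way, i.e. showing that a failure of $\mathbf{M}$-almost-minimizing produces a failure of $\mathcal{F}$-almost-minimizing. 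The trick, exactly as in Pitts \cite[\S 3.2]{P81} and reproduced in \cite{Zhou-Zhu17}, is that any $\mathcal{F}$-continuous deformation with controlled $\mathcal{A}^h$ along the way can be refined, via Almgren's interpolation, into an $\mathbf{M}$-continuous (fine) deformation with only a small increase in the maximum of $\mathcal{A}^h$ — here one uses that the $h$-volume term $\int_\Omega h$ is continuous in the flat topology and bounded by $c\,\mathrm{Vol}$, so it contributes negligibly and the controlling term is the area, for which Almgren's interpolation applies verbatim. One then takes the contrapositive.

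The implication $(c)\Longrightarrow(d)$ is again a standard localization argument: if $W \subset\subset U$, then for $\Omega$ realizing $(c)$ on $U$ and any competitor chain supported in $W$ with small $\mathbf{M}$-steps, one observes the chain is also supported in $U$, hence $(c)$ applies directly — so $\Omega$ witnesses $h$-almost-minimizing in $W$ with respect to $\mathbf{M}$, and by the (easy direction of) norm equivalence this gives $h$-almost-minimizing in $W$ with respect to $\mathcal{F}$, which is the definition in $(d)$. The only care needed is that the constants $\epsilon_i, \delta_i$ can be chosen to go to zero, which follows by running the argument for each $\epsilon = \epsilon_i$. I expect the main obstacle to be $(b)\Longrightarrow(c)$, specifically verifying that the Almgren interpolation estimate — which bounds the mass excess of the interpolated family — continues to control the $\mathcal{A}^h$ excess; this is where one must use that $\int_\Omega h$ is of lower order (continuous in flat norm, dominated by $c\cdot\mathrm{Vol}(U)$) relative to the area, an observation already emphasized in the introduction and in \cite{Zhou-Zhu17}. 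The rest is bookkeeping that transfers without change from the CMC case.
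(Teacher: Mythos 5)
You have all the right ingredients (norm comparisons, Almgren interpolation, the observation that $\int_\Omega h$ is controlled by $c\,\vol(\Omega)$ and so behaves continuously in the flat topology), but you have attached the interpolation to the wrong implication, and in doing so left a genuine gap at $(c)\Rightarrow(d)$. Recall that, in Definition \ref{D:c-am-varifolds}, being in $\sA^h(U;\ep,\de;\nu)$ means $\Omega$ resists every deformation chain whose $\nu$-steps are at most $\de$; a \emph{finer} topology on the steps means fewer admissible chains, hence an \emph{easier} condition to satisfy. Since $\M$-closeness implies $\mF$-closeness implies $\F$-closeness on bounded-mass sets (up to constants), the classes of admissible chains are nested, and one gets immediately by inclusion
\[ \sA^h(U;\ep,\de;\F) \subset \sA^h(U;\ep,\de';\mF)\subset\sA^h(U;\ep,\de'';\M). \]
That is all $(a)\Rightarrow(b)\Rightarrow(c)$ needs. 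In particular $(b)\Rightarrow(c)$ is \emph{not} where interpolation enters: a chain with small $\M$-jumps already has small $\mF$-jumps, so resisting all $\mF$-chains a fortiori means resisting all $\M$-chains, with no refinement required.

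The hard step is $(c)\Rightarrow(d)$, and your proof of it does not go through. You pass from $\M$-a.m.\ in $U$ to $\M$-a.m.\ in $W\subset\subset U$ by restriction (fine), and then claim that $\M$-a.m.\ in $W$ gives $\F$-a.m.\ in $W$ ``by the (easy direction of) norm equivalence.'' There is no easy direction here: chains with small $\F$-jumps form a strictly larger class than chains with small $\M$-jumps, so resisting the latter says nothing, a priori, about resisting the former. This is exactly where Lemma \ref{L:interpolation1} is needed: given a purported bad $\F$-chain supported in $W$, one interpolates between consecutive terms to produce a chain with small $\M$-jumps supported in $U$ (the room $W\subset\subset U$ is essential, since the interpolating sets only live in $U$), with only a small increase of $\max_j \Ac(\La_j)$. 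The point you correctly highlighted --- that the $h$-volume term is lower order, so Almgren's mass-excess estimate in the interpolation also controls the $\Ac$-excess --- is precisely what makes this step work. The resulting bad $\M$-chain contradicts $(c)$. So the interpolation argument you describe in your $(b)\Rightarrow(c)$ paragraph is the correct content, but it belongs at $(c)\Rightarrow(d)$, where you have instead invoked a norm comparison that is false.
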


\begin{remark} 
The proof of Proposition \ref{P:def-equiv} for the area functional was originally due to Pitts \cite[Theorem 3.9]{P81}. In our context, we work with boundaries instead of general integral currents. Furthermore, in Definition \ref{D:c-am-varifolds}(iii), we use the $\Ac$ functional instead of the mass $\M$. The main point is to derive $(c)\Longrightarrow (d)$, which follows from a standard interpolation process; see Lemma \ref{L:interpolation1}.
\end{remark}

Now we formulate and solve a natural constrained minimization problem. 
\begin{lemma}[A constrained minimization problem]
\label{L:minimisation}
Given $\ep, \de>0$, $U\subset M$ and any $\Om \in \sA^h(U;\ep,\de;\F)$, fix a compact subset $K\subset U$. Let $\C_\Om$ be the set of all $\La\in\C(M)$ such that there exists a sequence $\Om=\Om_0, \Om_1, \cdots, \Om_m=\La$ in $\C(M)$ satisfying:
 \begin{itemize}
\item[(a)] $\spt(\Om_i-\Om)\subset K$;
\item[(b)] $\F(\partial\Om_i - \partial\Om_{i+1})\leq \de$;
\item[(c)] $\Ac(\Om_i)\leq \Ac(\Om)+\de$, for $i=1, \cdots, m$.
\end{itemize}

Then there exists $\Omega^* \in \C(M)$ such that:
\begin{itemize}
\item[(i)] $\Om^* \in \C_\Om$, and \[ \Ac(\Om^*)=\inf\{\Ac(\La):\ \La\in\C_\Om\},\]
\item[(ii)] $\Om^*$ is locally $\Ac$-minimizing in $\interior(K)$,
\item[(iii)] $\Om^*\in \sA^h(U;\ep,\de;\F)$.
\end{itemize}
\end{lemma}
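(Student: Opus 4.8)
The plan is to establish Lemma \ref{L:minimisation} by a direct minimization within the constrained class $\C_\Om$, following the Almgren-Pitts scheme as adapted in \cite[\S 5]{Zhou-Zhu17}. First I would take a minimizing sequence $\La_j\in\C_\Om$ with $\Ac(\La_j)\to\inf\{\Ac(\La):\La\in\C_\Om\}$. Since all $\La_j$ agree with $\Om$ outside the fixed compact set $K$, and $\Ac(\La_j)$ is uniformly bounded, the mass $\M(\partial\La_j)$ is uniformly bounded (using $|\int h|\le c\vol(M)$); hence by compactness for Caccioppoli sets \cite[Theorem 6.3]{Si83}, up to a subsequence $\La_j\to\Om^*$ in $L^1$ and $\partial\La_j\to\partial\Om^*$ weakly as currents, for some $\Om^*\in\C(M)$ with $\spt(\Om^*-\Om)\subset K$. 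By lower semicontinuity of mass and $L^1$-convergence of the volume term, $\Ac(\Om^*)\le\liminf\Ac(\La_j)$, so $\Om^*$ attains the infimum — provided we check $\Om^*\in\C_\Om$. For this one needs that $\Om^*$ is reachable from $\Om$ through an admissible chain: here I would use the interpolation/discretization results (the standard argument, as in \cite[Proposition 5.3]{Zhou15b} or \cite[Lemma 5.2]{Zhou-Zhu17}) to insert finitely many intermediate sets between $\La_j$ (for $j$ large) and $\Om^*$, each differing by at most $\de$ in the $\F$-norm and with $\Ac$ controlled by $\Ac(\Om)+\de$; concatenating with the chain realizing $\La_j\in\C_\Om$ gives the admissible chain showing $\Om^*\in\C_\Om$. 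This establishes (i).

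For (ii), local $\Ac$-minimality of $\Om^*$ in $\interior(K)$ is proved by a cut-and-paste argument: given any competitor $\Lambda$ with $\spt(\Lambda-\Om^*)$ contained in a small ball $\sB\subset\subset\interior(K)$, one shows $\Lambda\in\C_\Om$ by appending $\Lambda$ to a chain realizing $\Om^*\in\C_\Om$. The subtlety is that passing from $\Om^*$ to $\Lambda$ in a single step may violate the $\F$-norm constraint (b) or the barrier constraint (c); to handle this one interpolates through intermediate sets obtained by restricting the modification to progressively larger portions of $\sB$ (or equivalently by a continuous isoperimetric-choice deformation), each step now small in $\F$-norm. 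One must verify constraint (c) holds along this interpolation; since $\Ac(\Lambda)\le\Ac(\Om^*)\le\Ac(\Om)$ (as $\Om\in\C_\Om$ and $\Om^*$ is the minimizer, or at least $\Ac(\Om^*)\le\Ac(\Om)+\de$), and the intermediate values can be made to lie between $\Ac(\Om^*)$ and $\Ac(\Lambda)$ up to a small error, this is arranged by taking $\sB$ small. Then $\Lambda\in\C_\Om$ forces $\Ac(\Lambda)\ge\Ac(\Om^*)$ by minimality, which is precisely local $\Ac$-minimality.

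For (iii), that $\Om^*\in\sA^h(U;\ep,\de;\F)$, one argues by contradiction: if not, there is a chain $\Om^*=\Lambda_0,\Lambda_1,\dots,\Lambda_m$ satisfying (i)--(iii) of Definition \ref{D:c-am-varifolds} for $\Om^*$ but with $\Ac(\Lambda_m)<\Ac(\Om^*)-\ep$. Concatenating with a chain from $\Om$ to $\Om^*$ realizing $\Om^*\in\C_\Om$ would produce a chain from $\Om$ witnessing $\Om\notin\sA^h(U;\ep,\de;\F)$ — here one must check the concatenated chain still satisfies the defining constraints, the key point being that every $\Lambda_i$ in the new tail satisfies $\Ac(\Lambda_i)\le\Ac(\Om^*)+\de\le\Ac(\Om)+\de$ since $\Ac(\Om^*)\le\Ac(\Om)$ — contradicting $\Om\in\sA^h(U;\ep,\de;\F)$.

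I expect the main obstacle to be the bookkeeping in the interpolation steps for parts (i) and (ii): ensuring that when one replaces a large-$\F$-norm transition (from $\La_j$ to $\Om^*$, or from $\Om^*$ to a local competitor $\Lambda$) by many small steps, \emph{all} intermediate sets simultaneously respect the support constraint, the $\de$-fineness in $\F$-norm, and the $\Ac\le\Ac(\Om)+\de$ barrier. The support and fineness constraints are handled by the standard discretization of a continuous (in $\F$) deformation \cite[Theorem 5.1]{Zhou15b}, but controlling the $\Ac$ values along the interpolation requires that the continuous path itself has $\Ac$ bounded appropriately — this is where the shrinking of the ball $\sB$ and the lower-order nature of the $\vol_h$ term (as emphasized throughout \cite{Zhou-Zhu17}) are used. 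The remaining parts are then essentially formal concatenations of admissible chains.
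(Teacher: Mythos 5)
Your overall strategy — direct minimization over $\C_\Om$, compactness, lower semicontinuity, then verifying (i)--(iii) by chain concatenation — matches the paper's. Part (iii) is essentially identical to the paper's argument. However, for (i) and (ii) you reach for interpolation machinery that is not needed, and the way you deploy it introduces difficulties the paper simply sidesteps.

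For (i), there is no need to insert a discretized chain between $\La_j$ and $\Om^*$. Since $\partial\La_j\to\partial\Om^*$ weakly with uniformly bounded mass in a compact manifold, we have $\F(\partial\La_j-\partial\Om^*)\to 0$, so for $j$ large the \emph{single} transition $\La_j\to\Om^*$ already satisfies the $\F$-constraint (b), while $\Ac(\Om^*)\le\Ac(\La_j)\le\Ac(\Om)+\de$ gives (c). Appending $\Om^*$ as one final step to a chain realizing $\La_j\in\C_\Om$ finishes it.

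For (ii), the ``subtlety'' you flag — that passing from $\Om^*$ to a competitor $\Om'$ in one step might violate (b) or (c) — is precisely what is eliminated by choosing the ball small enough, and then a one-step transition suffices. The paper picks $r$ so that $c\,\vol(\sB_r(p))<\de/4$ and $\M(\partial\Om^*\lc\sB_r(p))<\de/4$; given a competitor $\Om'$ supported in $\sB_r(p)$ with $\Ac(\Om')<\Ac(\Om^*)$, one gets $\M(\partial\Om'\lc\sB_r(p))\le\M(\partial\Om^*\lc\sB_r(p))+2c\,\vol(\sB_r(p))$, whence $\M(\partial\Om'-\partial\Om^*)<\de$ (hence $\F<\de$), and $\Ac(\Om')<\Ac(\Om^*)\le\Ac(\Om)+\de$. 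No interpolation is needed, and this avoids the genuine difficulty in your version: when you ``restrict the modification to progressively larger portions of $\sB$,'' the intermediate sets acquire new boundary on the spheres $\partial\sB'$, and there is no monotonicity forcing the intermediate $\Ac$-values to lie between the endpoints — your appeal to ``up to a small error'' would need real work to justify, whereas the one-step argument makes the issue vanish. So the plan is sound but you have made (i) and (ii) considerably harder than they are; recognize that in this lemma the minimizing sequence (resp.\ the shrunken ball) already makes single-step admissibility automatic.
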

\begin{proof}
Let us first describe the construction of $\Om^*$. Take any minimizing sequence $\{\La_j\}\subset\C_{\Om}$, i.e.
\[ \lim_{j \to \infty} \Ac(\La_j) = \inf\{\Ac(\La):\ \La\in\C_{\Om}\}.\]
Notice that $\spt(\La_j-\Om) \subset K$ and $\Ac(\La_j) \leq \Ac(\Om) + \de$ for all $j$. By standard compactness \cite[Theorem 6.3]{Si83}, after passing to a subsequence, $\partial\La_j$ converges weakly to some $\partial\Om^*$ with $\Om^* \in\C(M)$ and $\spt(\Om^*-\Om) \subset K$.
Since $\partial\La_j$ converges weakly to $\partial\Om^*$, we have that $\mH^n(\partial\Om^*)\leq \lim_{j\to\infty}\mH^n(\partial\La_j)$ and $ \int_{\Om^*}h = \lim_{j\to \infty}\int_{\La_j} h $. Therefore,
\begin{equation}
\label{E:Om^*}
\Ac(\Om^*) \leq  \inf\{\Ac(\La):\ \La\in\C_{\Om}\}.
\end{equation}
Now by taking the advantage of the discrete deformation sequence, properties (i-iii) follow in the same way as those in \cite[Lemma 5.7]{Zhou-Zhu17}, and we leave the details to readers.
\end{proof}

Given an $h$-almost minimizing varifold $V$, we can construct our $h$-replacements by applying Lemma \ref{L:minimisation} to the approximating sequence as in Definition \ref{D:c-am-varifolds} and taking the limit:

\begin{proposition}[Existence and properties of replacements]
\label{P:good-replacement-property}
Let $V\in\V_n(M)$ be $h$-almost minimizing in an open set $U \subset M$ and $K \subset U$ be a compact subset, then there exists $V^{*}\in \V_n(M)$, called \emph{an $h$-replacement of $V$ in $K$} such that, with $c=\sup|h|$, 
\begin{enumerate}
\item[(i)] $V\lc (M\backslash K) =V^{*}\lc (M\backslash K)$;
\item[(ii)] $-c \vol(K)\leq \|V\|(M)-\|V^{*}\|(M) \leq c \vol(K)$;
\item[(iii)] $V^{*}$ is $h$-almost minimizing in $U$;
\item[(iv)] $V^{*} =\lim_{i \to \infty} |\partial\Om^*_i|$ as varifolds for some $\Om^*_i\in\C(M)$ such that $\Om^*_i\in \sA^h(U; \ep_i, \de_i; \F)$ with $\ep_i, \de_i \to 0$; furthermore $\Om^*_i$ locally minimizes $\Ac$ in $\interior(K)$;
\item[(v)] if $V$ has $c$-bounded first variation in $M$, then so does $V^*$.
\end{enumerate}
\end{proposition}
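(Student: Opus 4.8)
The plan is to construct $V^*$ as a limit of solutions to the constrained minimization problem of Lemma \ref{L:minimisation}, applied slice-by-slice along the approximating sequence for $V$. Since $V$ is $h$-almost minimizing in $U$, Definition \ref{D:c-am-varifolds} supplies $\ep_i, \de_i \to 0$ and $\Om_i \in \sA^h(U; \ep_i, \de_i; \F)$ with $\mF(|\partial \Om_i|, V) \le \ep_i$. For each $i$, apply Lemma \ref{L:minimisation} with the compact set $K$ to produce $\Om_i^* \in \sA^h(U;\ep_i,\de_i;\F)$ which minimizes $\Ac$ over the class $\C_{\Om_i}$, locally minimizes $\Ac$ in $\interior(K)$, and satisfies $\spt(\Om_i^* - \Om_i) \subset K$. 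Since $\Ac(\Om_i^*) \le \Ac(\Om_i) \le \Ac(\Om) + \de_i$ is uniformly bounded, standard compactness \cite[Theorem 6.3]{Si83} yields, after passing to a subsequence, $\partial \Om_i^* \rightharpoonup \partial \Om^*$ weakly as currents and $|\partial \Om_i^*| \to V^*$ as varifolds for some $V^* \in \V_n(M)$ and $\Om^* \in \C(M)$. This gives (iv) immediately, including the local $\Ac$-minimality in $\interior(K)$.

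Property (i) follows because $\spt(\Om_i^* - \Om_i) \subset K$, so outside $K$ the boundaries $\partial \Om_i^*$ and $\partial \Om_i$ agree; passing to the varifold limit and using $\mF(|\partial \Om_i|, V) \to 0$ gives $V \lc (M\setminus K) = V^* \lc (M \setminus K)$. For (ii), note that replacing $\Om_i$ by $\Om_i^*$ changes only the portion of the boundary inside $K$ and changes the $h$-volume by at most $c\,\vol(K)$; since $\Ac(\Om_i^*) \le \Ac(\Om_i)$ we get $\mH^n(\partial \Om_i^* \lc K) \le \mH^n(\partial \Om_i \lc K) + 2c\,\vol(K)$, and combining with (i) and lower semicontinuity of mass under weak convergence (applied to $|\partial \Om_i|$ converging in the $\mF$-metric to $V$, together with a matching lower bound from the almost-minimizing property to prevent mass loss) yields the two-sided bound; this is exactly the ``mass defect controlled by the higher order term'' observation. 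For (iii), one checks directly from Definition \ref{D:c-am-varifolds} that $V^*$ is again $h$-almost minimizing in $U$: the $\Om_i^*$ themselves lie in $\sA^h(U;\ep_i,\de_i;\F)$ by Lemma \ref{L:minimisation}(iii) and $\mF(|\partial \Om_i^*|, V^*) \to 0$, so $V^*$ satisfies the defining condition with the same sequences $\ep_i, \de_i \to 0$.

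Finally, for (v), suppose $V$ has $c$-bounded first variation in $M$. Away from $K$ this is inherited from $V$ via (i). On $\interior(K)$, each $\Om_i^*$ locally minimizes $\Ac$, so $\partial \Om_i^*$ has first variation bounded by $c$ there (since $\de \Ac|_{\Om}(X) \le \de|\partial\Om|(X) + c\int |X| d\mu_{\partial\Om}$, minimality of $\Ac$ forces $|\de|\partial\Om_i^*|(X)| \le c\int|X|\,d\mu_{\partial\Om_i^*}$ for $X$ supported in $\interior(K)$), and this bound passes to the varifold limit $V^*$. The only subtlety is matching the two estimates across $\partial K$; but $c$-bounded first variation is a local condition that can be verified by testing against vector fields supported in small balls, each of which lies either in $M \setminus K$ or in a slightly enlarged interior region where the minimality argument still applies, so the two descriptions glue. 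The main obstacle I anticipate is (ii): controlling the total mass of $V^*$ requires not just lower semicontinuity (which would only give $\|V^*\|(M) \le \liminf \|\partial\Om_i^*\|(M)$, the wrong direction for the lower bound on $\|V\|(M) - \|V^*\|(M)$ up to the $c\,\vol(K)$ error) but also an argument — using the almost-minimizing property of $V$ to compare $\Om_i$ with $\Om_i^*$ in the reverse direction — ensuring $\|\partial \Om_i\|$ does not strictly exceed $\|\partial \Om_i^*\|$ plus the controlled defect in the limit. This is handled exactly as in \cite[Lemma 5.7]{Zhou-Zhu17} and its CMC analogue, so I would cite that argument rather than reproduce it.
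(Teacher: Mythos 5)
Your construction of $V^*$ as the varifold limit of the constrained $\Ac$-minimizers $\Om_i^*$ from Lemma \ref{L:minimisation}, applied along an approximating sequence $\Om_i\in\sA^h(U;\ep_i,\de_i;\F)$ for $V$, is exactly the paper's approach, and items (i), (iii), (iv) are handled the same way. Two points need attention.

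For (ii), your route via lower semicontinuity of mass is unnecessarily indirect. Since $\mF(|\partial\Om_i|,V)\to 0$ and $\mF(|\partial\Om_i^*|,V^*)\to 0$, the masses converge: $\M(\partial\Om_i)\to \|V\|(M)$ and $\M(\partial\Om_i^*)\to\|V^*\|(M)$, with no one-sided defect to worry about. The clean argument is the two-sided inequality $\Ac(\Om_i)-\ep_i \le \Ac(\Om_i^*)\le \Ac(\Om_i)$, where the upper bound is minimality over $\C_{\Om_i}$ and the lower bound is precisely the $h$-almost-minimizing property applied to the deformation sequence defining $\C_{\Om_i}$. Expanding $\Ac$ and using $\Om_i\triangle\Om_i^*\subset K$ gives $|\M(\partial\Om_i)-\M(\partial\Om_i^*)|\le c\vol(K)+\ep_i$ (note: $c$, not $2c$), then pass to the limit. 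You identify the right ingredient, but you should just state the two-sided $\Ac$ inequality rather than invoke semicontinuity.

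For (v), your argument has a genuine gap at $\partial K$. You establish $c$-bounded first variation inside $\interior(K)$ (via local $\Ac$-minimality of the $\Om_i^*$) and on $M\setminus K$ (via agreement with $V$), but $\interior(K)\cup(M\setminus K)=M\setminus\partial K$, so a vector field supported in a small ball around a point of $\partial K$ is not handled by either of your cases; there is no ``slightly enlarged interior region'' where the $\Ac$-minimality is known to hold. The fix is to use (iii) together with Lemma \ref{L:c-am implies c-bd-first-variation}: since $V^*$ is $h$-almost minimizing in the \emph{open} set $U\supset K$, it automatically has $c$-bounded first variation in $U$; combined with (i) on $M\setminus K$, the two open sets $U$ and $M\setminus K$ do cover $M$, and a partition of unity then yields the global bound.
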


\begin{lemma}[Regularity of $h$-replacement]
\label{L:reg-replacement}
Let $2 \leq n \leq 6$. Suppose that $h$ satisfies $(\dagger)$ or $(\ddagger)$. Under the same hypotheses as Proposition \ref{P:good-replacement-property}, if $\Sigma=\spt \|V^*\| \cap \interior(K)$, then $\Sigma$ is a smooth, almost embedded, stable $h$-hypersurface. Furthermore, $V^* \lc \interior(K) = \sum_{i=1}^L m_i \Sigma_i,$ where each component $\Sigma_i$ satisfies either: 
\begin{enumerate}
\item $\Sigma_i$ is not minimal, and $m_i=1$ so the density of $V^*$ is $1$ along $\mR(\Si_i)$ and $2$ along $\mS(\Si_i)$; or 
\item $\Sigma_i$ is minimal and smoothly embedded, but $m$ is some natural number. 
\end{enumerate}
Finally, if $p\in \Sigma_i$ and $m_i =1$ then $\Sigma$ is locally a boundary in a neighborhood of $p$. 
\end{lemma}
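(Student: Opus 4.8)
The plan is to realize $V^*\lc\interior(K)$ as the locally smooth varifold limit of the boundaries of the constrained $\Ac$-minimizers produced in Proposition \ref{P:good-replacement-property}(iv), and then to invoke the compactness theorem for stable $h$-boundaries, Theorem \ref{T:compactness}, which already packages every assertion of the lemma. Concretely, Proposition \ref{P:good-replacement-property}(iv) gives $V^*=\lim_i|\partial\Om_i^*|$ as varifolds, where each $\Om_i^*\in\C(M)$ locally minimizes $\Ac$ in $\interior(K)$ and the $\M(\partial\Om_i^*)$ are uniformly bounded; I would simply check that the sequence $\{\partial\Om_i^*\}$ satisfies the hypotheses of Theorem \ref{T:compactness} on the open set $\interior(K)$.

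\textbf{Verifying the hypotheses.} There are three points. First, each $\partial\Om_i^*$ is a smooth embedded hypersurface of prescribed mean curvature $h$ in $\interior(K)$: given $p\in\interior(K)$ choose $\sB_s(p)\subset\subset\interior(K)$, so $\Om_i^*\lc\sB_s(p)$ minimizes $\Ac$ among competitors supported in $\sB_s(p)$ and Theorem \ref{T:regularity of Ac minimizers} applies; since $2\le n\le 6$ the putative singular set has dimension at most $n-7<0$ and is empty, and the first variation formula (\ref{E: 1st variation for Ac}) forces the mean curvature to equal $h$. Being a cycle, $\partial\Om_i^*$ has no boundary in $\interior(K)$. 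Second, each $\partial\Om_i^*$ is stable: a local $\Ac$-minimizer has nonnegative second variation, which by (\ref{E: 2nd variation for Ac}) is precisely the stability inequality of Definition \ref{D:stable c-hypersurface}, so $\partial\Om_i^*$ is a stable $h$-boundary. Third, the area bound $\sup_i\M(\partial\Om_i^*)<\infty$ is part of the construction in Proposition \ref{P:good-replacement-property} (one may also see it directly from $\Ac(\Om_i^*)\le\Ac(\Om_i)$ and $|\int_{\Om_i^*}h|\le c\,\vol(M)$, the $\Ac(\Om_i)$ being uniformly bounded).

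\textbf{Conclusion.} With these in place, Theorem \ref{T:compactness} (applied with $U=\interior(K)$, with the constant sequence $h^{(k)}\equiv h$, and using that an embedded hypersurface is a fortiori almost embedded) yields, after passing to a subsequence, that $\partial\Om_i^*$ converges locally smoothly with multiplicity to an almost embedded stable $h$-hypersurface $\Sigma$ in $\interior(K)$, with $\Sigma=\spt\|V^*\|\cap\interior(K)$ and each component $\Sigma_i$ either minimal and smoothly embedded (with some multiplicity $m_i\in\N$) or not minimal (with $m_i=1$, density $1$ along $\mR(\Si_i)$ and $2$ along $\mS(\Si_i)$); moreover, unless $\Sigma_i$ is minimal of multiplicity $\ge2$, it is locally a boundary, which covers in particular the case $m_i=1$. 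Because the convergence is locally smooth with multiplicity, $V^*\lc\interior(K)=\lim_i|\partial\Om_i^*|\lc\interior(K)=\sum_i m_i[\Si_i]$, which is exactly the asserted decomposition.

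\textbf{Main obstacle.} Since the substantive analytic content (regularity of $\Ac$-minimizers and the PMC compactness theorem with its touching-set control) is already available, this lemma is essentially an assembly; the only point needing care is the passage from ``local $\Ac$-minimizer in $\interior(K)$'' to a bona fide sequence of stable $h$-boundaries to which Theorem \ref{T:compactness} applies. In particular, the curvature estimates of Theorem \ref{T:curvature estimates and compactness} degenerate near $\partial(\interior(K))$, so one must work on an exhaustion of $\interior(K)$ by compactly contained open subsets and diagonalize, and one should confirm that the multiplicities appearing in the compactness theorem match those of the varifold limit $V^*$, which is immediate from the locally smooth convergence.
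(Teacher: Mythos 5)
Your proof is correct and follows essentially the same route as the paper's: regularity of $\partial\Om_i^*$ from Theorem \ref{T:regularity of Ac minimizers} applied to the locally $\Ac$-minimizing property of Proposition \ref{P:good-replacement-property}(iv), stability from the minimizing property, then passage to the limit via Theorem \ref{T:compactness}. The paper phrases the stability step as a contradiction appealing to Lemma \ref{L:minimisation}(i) (deforming $\Om_i^*$ by an ambient isotopy would strictly decrease $\Ac$, violating constrained minimality), while you read it directly off Proposition \ref{P:good-replacement-property}(iv) as nonnegative second variation; these are interchangeable.
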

\begin{proof}
By the regularity for local minimizers of the $\Ac$ functional (Theorem \ref{T:regularity of Ac minimizers}), we know that each $\partial\Om^*_i$ is a smoothly embedded $h$-boundary in $\interior(K)$ by Proposition \ref{P:good-replacement-property}(iv). Moreover, $\partial\Om^*_i$ is stable in $\interior(K)$ in the sense of Definition \ref{D:stable c-hypersurface}. If this were not true, one can deform $\Om^*_i$ by ambient isotopies supported in $\interior(K)$ such that the $\Ac$ values are strictly decreasing; it is then easy to see this contradicts Lemma \ref{L:minimisation}(i). The lemma then follows from the compactness Theorem \ref{T:compactness}.
\end{proof}

By iterating Proposition \ref{P:good-replacement-property}, we see that blowups of an $h$-almost minimizing varifolds have the good replacement property of \cite{CD03, DT13}. This allows us to characterize certain blowups of the the $h$-min-max varifold. In particular the tangent cones are planar; see also Section \ref{S:Regularity for c-min-max varifold}. The detailed proofs follow similar to \cite[Lemma 5.10, Proposition 5.11]{Zhou-Zhu17}, and we omit them here.

Given $p\in M, r>0$, let $\bleta_{p,r}: \R^L\to\R^L$ be the dilation defined by $\bleta_{p, r}(x)=\frac{x-p}{r}$.
\begin{proposition}
\label{L:blowup is regular}
Let $2\leq n\leq 6$, and $V\in\V_n(M)$ be an $h$-almost minimizing varifold in $U$. Given a sequence $p_i\in U$ with $p_i\to p\in U$ and, a sequence $r_i>0$ with $r_i\to 0$, let $\overline{V}=\lim (\bleta_{p_i, r_i})_\# V$ be the varifold limit. Then $\overline{V}$ is an integer multiple of some complete embedded minimal hypersurface $\Si$ in $T_p M$, and moreover, $\Si$ is proper.
\end{proposition}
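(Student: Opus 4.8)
The plan is to show that the blowup limit $\overline V$ is a stationary integral varifold in $T_pM\cong\R^{n+1}$ enjoying the \emph{good replacement property} of Colding--De Lellis, with \emph{exact} (not merely approximate) minimal replacements, and then to invoke their regularity theory. The mechanism is the recurring observation that the $h$-volume is of lower order: a replacement in a ball of radius $\varrho$ changes the mass by at most $c\,\vol(B_\varrho)=O(\varrho^{n+1})$, and this error is invisible after the $\varrho^{-n}$ rescaling used to blow up an $n$-varifold.

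\emph{Step 1 (the blowup is stationary and integral).} Since $V$ is $h$-almost minimizing in $U$, Lemma \ref{L:c-am implies c-bd-first-variation} gives that $V$ has $c$-bounded first variation in $U$, $c=\sup|h|$, and $V$ is integral as a varifold limit of the reduced boundaries $|\partial\Om_i|$. In geodesic normal coordinates at $p$ we identify a neighbourhood of $p$ in $M$ with a Euclidean ball in $T_pM$, and the rescaled pulled-back metrics converge smoothly to the Euclidean metric on compact sets. Then $V_i:=(\bleta_{p_i,r_i})_\#V$ has first variation bounded by $\approx c\,r_i\to 0$, so $\overline V=\lim_i V_i$ is a stationary integral varifold in $\R^{n+1}$; in particular monotonicity holds and the density ratios of $\overline V$, and uniformly those of the $V_i$, are locally bounded.

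\emph{Step 2 (transferring the replacement property).} Fix a ball $B_\varrho(y)\subset\R^{n+1}$ (the case of an annulus is identical). For $i$ large, $q_i:=\bleta_{p_i,r_i}^{-1}(y)$ and $B_{r_i\varrho}(q_i)$ lie in $U$, and $V$ is $h$-almost minimizing there by Proposition \ref{P:def-equiv}(d). Proposition \ref{P:good-replacement-property} furnishes an $h$-replacement $V_i^{*}$ of $V$ in $\overline{B_{r_i\varrho}(q_i)}$ that agrees with $V$ outside this ball, is again $h$-almost minimizing in $U$ (hence has $c$-bounded first variation there), and satisfies $\big|\,\|V_i^{*}\|(M)-\|V\|(M)\,\big|\le c\,\vol\big(B_{r_i\varrho}(q_i)\big)=O\big((r_i\varrho)^{n+1}\big)$; by monotonicity its area inside $B_{r_i\varrho}(q_i)$ is at most $C(r_i\varrho)^n$. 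By Lemma \ref{L:reg-replacement}, $V_i^{*}$ is, inside $B_{r_i\varrho}(q_i)$, a smooth almost embedded stable $h$-hypersurface. Rescaling, $\overline V_i^{*}:=(\bleta_{p_i,r_i})_\#V_i^{*}$ agrees with $V_i$ outside $B_\varrho(y)$, has $(r_ic)$-bounded first variation, and is inside $B_\varrho(y)$ a smooth almost embedded stable hypersurface of prescribed mean curvature $\approx r_ih\to0$ with area $\le C\varrho^n$. Hence by the curvature estimates of Theorem \ref{T:curvature estimates and compactness} together with the $h^{(k)}\to 0$ case of Theorem \ref{T:compactness}, a subsequence of $\overline V_i^{*}\lc B_\varrho(y)$ converges locally smoothly to an integer multiple of a smooth \emph{embedded} stable minimal hypersurface. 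Thus $\overline V^{*}:=\lim_i\overline V_i^{*}$ is stationary in $\R^{n+1}$, agrees with $\overline V$ outside $\overline{B_\varrho(y)}$, and is smooth embedded minimal inside $B_\varrho(y)$. Finally, for all but countably many radii $\varrho$ neither $\overline V$ nor $\overline V^{*}$ charges $\partial B_\varrho(y)$, and then $\|\overline V^{*}\|\big(\overline{B_\varrho(y)}\big)-\|\overline V\|\big(\overline{B_\varrho(y)}\big)=\lim_i r_i^{-n}\big(\|V_i^{*}\|(M)-\|V\|(M)\big)=0$; that is, $\overline V^{*}$ is a genuine minimal replacement of $\overline V$, with no mass defect.

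\emph{Step 3 (regularity and conclusion).} Since $y$ and $\varrho$ are arbitrary, $\overline V$ is a stationary integral varifold in $\R^{n+1}$ admitting smooth embedded minimal replacements on every ball and annulus. The Colding--De Lellis argument now applies exactly as in \cite[Lemma 5.10, Proposition 5.11]{Zhou-Zhu17}: comparing replacements on overlapping balls and invoking the strong maximum principle for smooth minimal hypersurfaces forces every tangent cone of $\overline V$ to be a plane with integer, locally constant multiplicity, whence $\overline V$ is regular by Allard's theorem. Therefore $\Si:=\spt\|\overline V\|$ is a smooth embedded minimal hypersurface in $T_pM$ and $\overline V$ is an integer multiple of $[\Si]$, the multiplicity being constant on each component by the constancy theorem. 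Being a smooth submanifold that is closed as a subset of $\R^{n+1}$, $\Si$ is properly embedded, and properness forces completeness. I expect the only genuine difficulty to be the bookkeeping in Step 2 — checking that the $O(r_i^{n+1})$ mass error from the $h$-volume term truly disappears under the $r_i^{-n}$ rescaling, while monotonicity for $c$-bounded first variation keeps the replacement hypersurfaces' areas uniformly bounded so that the curvature estimates and smooth compactness are applicable; everything else is a routine transcription of the minimal-case arguments.
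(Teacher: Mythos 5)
Your proof follows the paper's own argument in all essentials: deducing stationarity of the blowup from the $c$-bounded first variation, transferring the replacement structure via Proposition \ref{P:good-replacement-property} with the mass defect $O\big((r_i\varrho)^{n+1}\big)$ vanishing after the $r_i^{-n}$ rescaling, obtaining smooth embedded stable minimal limits from the $h^{(k)}\to 0$ case of the compactness theorem, and then invoking the Colding--De Lellis regularity via the good replacement property. The only point you leave implicit that the paper spells out is that, because each $V_i^*$ is again $h$-almost minimizing in $U$ by Proposition \ref{P:good-replacement-property}(iii), the construction can be iterated on further annuli so that replacements themselves admit replacements — this nesting is what the Colding--De Lellis good replacement property actually requires, and you should state it rather than only citing \cite{Zhou-Zhu17}.
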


\begin{proposition}[Tangent cones are planes]
\label{P:tangent-cone}
Let $2 \leq n \leq 6$. Suppose $V \in \V_n(M)$ has $c$-bounded first variation in $M$ and is $h$-almost minimizing in small annuli. Then $V$ is integer rectifiable. Moreover, for any $C \in \VarTan(V,p)$ with $p \in \spt\|V\|$, 
\begin{equation}
\label{E:tangent cones are planes}
C= \Theta^n (\|V\|, p) |S| \text{ for some $n$-plane $S \subset T_p M$ where $\Theta^n(\|V\|, p)\in\N$}.
\end{equation}
\end{proposition}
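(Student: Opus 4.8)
The plan is to deduce the statement from two ingredients already in hand: the monotonicity formula for varifolds of $c$-bounded first variation, and the blowup regularity of Proposition \ref{L:blowup is regular}.

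First I would record what the hypothesis buys us. Perturbing the usual monotonicity computation by the $c$-term (see \cite[\S 17]{Si83}) shows that the density ratio $\omega_n^{-1}r^{-n}\|V\|(B_r(p))$, corrected by a factor $e^{cr}$, is monotone nondecreasing for every $p$ and all small $r>0$; hence the density $\Theta^n(\|V\|,p)$ exists at every point and is bounded below by a positive constant on $\spt\|V\|$. Since $c$-bounded first variation implies locally bounded first variation and the density is positive $\|V\|$-a.e., Allard's rectifiability theorem \cite{Si83} gives that $V$ is rectifiable; integer rectifiability will then follow as soon as the density is shown to be a positive integer at every point of the support, which is exactly what the analysis of tangent cones below provides.

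Next, fix $p\in\spt\|V\|$ and $C\in\VarTan(V,p)$, so that $C=\lim_i(\bleta_{p,r_i})_\#V$ for some $r_i\to 0$. I would first argue that $C$ is a stationary cone: the generalized mean curvature of $(\bleta_{p,r_i})_\#V$ is bounded by $cr_i\to 0$, so the limit $C$ is stationary in $T_pM\cong\R^{n+1}$; and the corrected monotonicity above forces $\omega_n^{-1}\rho^{-n}\|C\|(B_\rho(0))\equiv\Theta^n(\|V\|,p)$ for all $\rho>0$, so the equality case of the monotonicity formula for stationary varifolds makes $C$ dilation-invariant. On the other hand, applying Proposition \ref{L:blowup is regular} with the constant sequence $p_i\equiv p$ (whose proof uses the $h$-almost-minimizing property only on small annuli about $p$, as guaranteed by the present hypothesis), we get $C=m|\Si|$ for some integer $m\geq 1$ and some complete, proper, embedded minimal hypersurface $\Si\subset T_pM$. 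Since $C$ is a nonzero cone, $\spt\|C\|=\Si$ is a nonempty closed dilation-invariant set and hence contains the origin; but a smooth embedded minimal hypersurface passing through its own vertex and invariant under dilations must be an $n$-plane $S$ through $0$ (near the vertex it is the graph over $S$ of a degree-one homogeneous smooth function with vanishing differential at $0$, hence of the zero function, and dilation invariance propagates this globally). Therefore $C=m|S|$, and comparing densities at the origin yields $\Theta^n(\|V\|,p)=m\in\N$. As $p\in\spt\|V\|$ was arbitrary, this proves (\ref{E:tangent cones are planes}) and shows that $\Theta^n(\|V\|,\cdot)$ is everywhere a positive integer on $\spt\|V\|$, so $V$ is integer rectifiable.

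The main obstacle is the structural step for $C$: one must simultaneously know that $C$ is a cone (which is where one uses that the $c$-term disappears under blowup, so that $C$ is stationary with constant density ratio) and that $C$ is a multiple of a smooth embedded minimal hypersurface, the latter being precisely the content of Proposition \ref{L:blowup is regular} via the good replacement property of \cite{CD03, DT13}. Once both facts are available, the assertion ``a smooth embedded minimal cone is a plane'' is elementary, and integrality of the density drops out of matching densities at the vertex.
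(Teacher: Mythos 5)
Your proof is correct and follows essentially the same route as the paper's: use the monotonicity formula (with the $c$-bounded first variation) to make the tangent varifold a stationary cone, invoke the blowup regularity machinery (Proposition \ref{L:blowup is regular} / the good replacement property of \cite{CD03, DT13}) to show it is an integer multiple of a smooth embedded minimal hypersurface, and then conclude that a smooth embedded minimal cone through its vertex is a plane. You have also correctly flagged the one subtle point — that Proposition \ref{L:blowup is regular} is stated under ``$h$-almost minimizing in $U$'' whereas here one only has the small-annuli version — and noted why the argument still goes through (the good replacement property only needs replacements on small annuli, which is exactly what the present hypothesis furnishes); the paper makes the same remark.
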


\section{Regularity for $h$-min-max varifold}
\label{S:Regularity for c-min-max varifold}

In this section, we prove the regularity of our min-max varifolds. In particular we prove that every varifold which has $c$-bounded variation and is $h$-almost minimizing in small annuli is a smooth, closed, almost embedded, hypersurface of $h$-prescribed mean curvature whose non-minimal components have multiplicity one.

\begin{theorem}[Main regularity]
\label{T:main-regularity}
Let $2 \leq n \leq 6$, and $(M^{n+1}, g)$ be an $(n+1)$-dimensional smooth, closed Riemannian manifold. Further let $h:M\rightarrow \mathbb{R}$ be a smooth function satisfying $(\dagger)$ or $(\ddagger)$, and set $c=\sup|h|$.
 
If $V \in \V_n(M)$ is a varifold which
\begin{enumerate}
\item has $c$-bounded first variation in $M$, and 
\item is $h$-almost minimizing in small annuli,
\end{enumerate}
then $V$ is induced by $\Sigma$, where $\Sigma$ is a closed, almost embedded $h$-hypersurface (possibly disconnected). Moreover, each component $\Sigma^{(i)}$ of $\Sigma$ satisfies either:
\begin{itemize}
\item[(i)] $\Sigma^{(i)}$ is not minimal, the density of $V$ is exactly $1$ at the regular set $\mR(\Si^{(i)})$ and $2$ at the touching set $\mS(\Si^{(i)})$; or
\item[(ii)] $\Sigma^{(i)}$ is a smoothly embedded minimal hypersurface (in particular $h\equiv 0$ on $\Sigma^{(i)}$), and $V$ has integer density $m_i$ on $\Sigma^{(i)}$. 
\end{itemize}
\end{theorem}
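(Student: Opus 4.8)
The plan is to follow the Almgren--Pitts--Colding--De Lellis scheme, adapted to the $\Ac$ functional as indicated in the introduction, with the PMC compactness theory (Theorem \ref{T:compactness}) and the touching set estimates (Proposition \ref{P:smooth touching set}) doing the heavy lifting. First I would fix $p\in\spt\|V\|$ and work in a small ball $\sB_{r_0}(p)$ with $r_0\leq r_{am}(p)$, so that $V$ is $h$-almost minimizing on all annuli $A_{s,r}(p)$, $0<s<r\leq r_0$. The core construction is successive \emph{$h$-replacements}: using Proposition \ref{P:good-replacement-property} I would replace $V$ in a closed annulus $\overline{A}_{s,r_0}(p)$ to obtain $V^*$, which agrees with $V$ outside the annulus, is again $h$-almost minimizing, has $c$-bounded first variation, and (by Lemma \ref{L:reg-replacement}) is a smooth, almost embedded, stable $h$-hypersurface inside the open annulus, each component being either a non-minimal $h$-hypersurface with multiplicity one (density $1$ on the regular set, $2$ on the touching set) or a smoothly embedded minimal hypersurface with integer multiplicity. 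Then I would take a second replacement $V^{**}$ of $V^*$ in an overlapping annulus $\overline{A}_{s',r'}(p)$ with $s'<s<r'<r_0$; the goal is that $V^*$ and $V^{**}$ glue together across the overlap region $A_{s,r'}(p)$, so that repeated replacements, shrinking the inner radius to zero, produce a hypersurface that is smooth and stable in the whole punctured ball $\sB_{r_0}(p)\setminus\{p\}$.

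The gluing step is where the main difficulty lies, and it requires the PMC compactness and unique continuation theory in an essential way. First I would show that $V^{**}$ may be represented as a boundary in $A_{s,r'}(p)$ (each replacing domain $\Om_i^*$ is a Caccioppoli set, and one must verify the boundaries converge without cancellation and pass to the limit); this gives matching orientations. On the regular part, $V^*$ and $V^{**}$ are both smooth stable $h$-hypersurfaces that coincide on an open subset of the overlap (away from the inner boundary of the second annulus, where the second replacement is trivial), so Corollary \ref{cor:uniqcont} (unique continuation for immersed $h$-hypersurfaces) forces them to agree on the connected regular part. Near the touching set --- which by Proposition \ref{P:smooth touching set} is contained in a countable union of embedded $(n-1)$-submanifolds, hence small --- I would first obtain a putative smooth PMC gluing by working sheet-by-sheet using the graphical decomposition into embedded sheets, then use Corollary \ref{cor:uniqcont} again (and Lemma \ref{lem:minsheet}, Corollary \ref{cor:mincont} in the case a minimal component is involved) to match the sheets, ruling out the pathology of two non-minimal sheets fusing into a higher-multiplicity minimal sheet. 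The upshot is a well-defined $\ti V$, smooth, almost embedded, stable $h$-hypersurface in $\sB_{r_0}(p)\setminus\{p\}$, agreeing with $V$ near $\partial \sB_{r_0}(p)$.

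Next I would remove the singularity at the center. Since $\ti V$ has $c$-bounded first variation and the tangent cones of $V$ at $p$ are planes (Proposition \ref{P:tangent-cone}), a standard blowup/monotonicity argument using the stability and curvature estimates (Theorem \ref{T:curvature estimates and compactness}) --- essentially the Colding--De Lellis good-replacement argument (Proposition \ref{L:blowup is regular}) applied at $p$ --- shows $\ti V$ extends across $p$ as a smooth, almost embedded, stable $h$-hypersurface; equivalently, $p$ is at worst a removable singularity of the extended replacement. Finally I would compare $V$ with this extended replacement $V^*$ on the annular region using a moving-sphere argument: the maximum principle Proposition \ref{P:maximum principle} for $c$-bounded first variation, together with the fact that the mass defect $\bigl|\,\|V\|(M)-\|V^*\|(M)\,\bigr|\leq c\,\vol(K)\to 0$ as the annuli shrink, forces $\Theta^n(\|V\|,\cdot)=\Theta^n(\|V^*\|,\cdot)$ on the region between the two sphere families, so that $V=V^*$ there and hence $V$ is smooth, almost embedded and stable near $p$ with the asserted density. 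Since $p$ was arbitrary, $V$ is globally induced by a closed, almost embedded $h$-hypersurface $\Sigma$; the component-by-component dichotomy (i)/(ii), and multiplicity one on non-minimal components, then follows directly from the local structure furnished by Lemma \ref{L:reg-replacement} and Theorem \ref{T:compactness}. The hardest part, as noted, is controlling the touching set at the gluing interface and preventing non-minimal sheets from gluing into a higher-multiplicity minimal sheet --- this is exactly where assumptions $(\dagger)$/$(\ddagger)$ enter, via Theorem \ref{thm:touchingset} and the unique continuation corollaries.
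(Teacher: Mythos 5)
Your proposal reproduces the paper's five-step skeleton (replacements on overlapping concentric annuli, gluing, extension to the punctured ball, removable singularity, and the moving-sphere density comparison), and most of the ingredients you cite are the right ones. However, there is a genuine gap in your gluing step, which is in fact the hardest part of the proof.

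You write that ``$V^*$ and $V^{**}$ are both smooth stable $h$-hypersurfaces that coincide on an open subset of the overlap,'' and you then invoke Corollary~\ref{cor:uniqcont}. That premise is false. In the paper's notation, with first annulus $A_{s,t}(p)$ and second annulus $A_{s_1,s_2}(p)$ ($s_1<s<s_2<t$), the two replacements give $V^*=\Sigma_1$ and $V^{**}=\Sigma_2$ throughout the overlap $A_{s,s_2}(p)$, and these are \emph{a priori} unrelated hypersurfaces; the only region where $V^*$ and $V^{**}$ agree is $A_{s_2,t}(p)$, which lies \emph{outside} the second annulus and hence outside the domain of definition of $\Sigma_2$. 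There is therefore no open set on which both $\Sigma_1$ and $\Sigma_2$ are defined and coincide, so unique continuation cannot be applied at the outset --- coincidence on an open set is the \emph{conclusion} of the gluing, not a hypothesis you get for free. The actual mechanism in the paper is: (1) use the touching-set estimate (Proposition~\ref{P:smooth touching set}) together with Sard's lemma to pick $s_2$ so that the gluing sphere $\partial B_{s_2}(p)$ is transverse to both $\Sigma_1$ and all the $(n-1)$-submanifolds containing $\mS(\Sigma_1)$; (2) show via tangent-cone and blowup analysis (the half-space theorem and the maximum principle for minimal hypersurfaces) that $\Clos(\Sigma_2)$ meets $\partial B_{s_2}$ precisely along $\Gamma=\Sigma_1\cap\partial B_{s_2}$, with matching tangent planes; (3) use the boundary representation (your ``Claim 1'') to pin down matching orientations of the unit normals near multiplicity-one points; (4) write $\Sigma_1$ and $\Sigma_2$ as graphs over $T_q\Sigma_1$ and upgrade the $C^1$ gluing to $C^\infty$ via the PMC elliptic equation; only \emph{then} does unique continuation enter, to extend the coincidence away from the interface and to prevent non-minimal sheets from fusing into a higher-multiplicity minimal sheet near $\mS(\Gamma)$. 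You do mention the Sard-type smallness and the sheet-by-sheet graphical decomposition, so you have the right ingredients in hand, but as written the argument substitutes the desired conclusion for the transversality-and-blowup analysis that actually establishes it.

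The remaining steps (extension by letting the inner radius go to zero, removable singularity via tangent-plane analysis and Allard's theorem, the moving-sphere density comparison using the maximum principle for $c$-bounded first variation) are in line with the paper, though in Step~4 the paper additionally relies on the graphical decomposition of $\Sigma$ in dyadic annuli and the multiplicity-one conclusion from stability to reduce to the multiplicity-one Allard setting, which deserves explicit mention.
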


\begin{proof}
The conclusion is purely local, so we only need to prove the regularity of $V$ near an arbitrary point $p\in \spt\|V\|$. Fix a $p\in\spt\|V\|$, then there exists $0< r_0 < r_{am}(p)$ such that for any $r<r_0$, the mean curvature $H$ of $\partial B_r(p)\cap M$ in $M$ is greater than $c$. Here $r_{am}(p)$ is as in Definition \ref{D:am-annuli}.

In particular, if $r<r_0$ and $W\in\V_n(M)$ has $c$-bounded first variation in $M$ and $W\neq 0$ in $B_r(p)$, then by the maximum principle (Proposition \ref{P:maximum principle})
\begin{equation}
\label{E:corollary-max-principle}
\emptyset \neq \spt \|W\| \cap \partial B_r(p) = \Clos\left( \spt \|W\| \setminus \Clos(B_r(p))\right) \cap \partial B_r(p).
\end{equation} 
Note that in the second equality we need a localized version of Proposition \ref{P:maximum principle} which holds true by the remark after \cite[Theorem 2]{White10}.

We will show that $V\lc B_{r_0}(p)$ is either an embedded minimal hypersurface (on which $h\equiv 0$) with integer multiplicity, or an almost embedded hypersurface of prescribed mean curvature $h$ with density equal to $2$ along its touching set. 

The argument consists of five steps:

\vspace{.3cm}
\noindent \textbf{Step 1:} \textit{Constructing successive $h$-replacements $V^*$ and $V^{**}$ on two overlapping concentric annuli.}

\noindent \textbf{Step 2:} \textit{Gluing the $h$-replacements smoothly (as immersed hypersurfaces) on the overlap.}

\noindent \textbf{Step 3:} \textit{Extending the $h$-replacements to the point $p$ to get a $h$-`replacement' $\ti{V}$ on the punctured ball.}

\noindent \textbf{Step 4:} \textit{Showing that the singularity of $\ti{V}$ at $p$ is removable, so that $\ti{V}$ is regular.}

\noindent \textbf{Step 5:} \textit{$V$ coincides with the almost embedded hypersurface $\ti{V}$ on a small neighborhood of $p$.}
\vspace{.3cm}

We now proceed to the proof.

\subsection*{Step 1}
We first describe the construction of $h$-replacements on overlapping annuli; a key property will be that the replacements are also boundaries in the chosen annulus (see Claim 1), at least near points of multiplicity one.  


Fix $0<s<t<r_0$. By the choice of $r_0$, we can apply Proposition \ref{P:good-replacement-property} to $V$ to obtain an $h$-replacement $V^*$ in $K=\Clos(A_{s, t}(p)\cap M)$.  By (\ref{E:corollary-max-principle}) and Lemma \ref{L:reg-replacement}, the restriction
\[ \Si_1=\spt \|V^*\| \lc (A_{s, t}(p)\cap M) \]
is a nontrivial, smooth, almost embedded, stable $h$-hypersurface with some unit normal $\nu_1$; when the multiplicity is $1$, $\Si_1$ is locally a boundary so we can choose $\nu_1$ to be the outer normal. 

By Proposition \ref{P:smooth touching set}, the touching set $\mathcal{S}(\Sigma_1)$ is contained in a countable union of $(n-1)$-dimensional connected submanifolds $\bigcup S_1^{(k)}$. Since a countable union of sets of measure zero still has measure zero, it follows from Sard's theorem that we may choose $s_2 \in (s, t)$ such that $\partial (B_{s_2}(p)\cap M)$ intersects $\Si_1$ and all the $S_1^{(k)}$ transversally. 

Given any $s_1\in (0, s)$, by Proposition \ref{P:good-replacement-property}(iii), $V^*$ is still $h$-almost-minimizing in small annuli, and we can apply Proposition \ref{P:good-replacement-property} again to get an $h$-replacement $V^{**}$ of $V^*$ in $K=\Clos(A_{s_1, s_2}(p)\cap M)$. By (\ref{E:corollary-max-principle}) and Lemma \ref{L:reg-replacement} again, the restriction
\[ \Si_2=\spt\|V^{**}\|\lc (A_{s_1, s_2}(p)\cap M) \]
is also a nontrivial, smooth, almost embedded, stable $h$-hypersurface with some unit normal $\nu_2$, which points outward at multiplicity $1$ points in $A_{s_1,s_2}(p)$. Note that by Proposition \ref{P:good-replacement-property}(v), both $V^*$ and $V^{**}$ have $c$-bounded first variation.

We first observe that the second $h$-replacement can be chosen to be locally given by a boundary near `multiplicity one' points in $\partial B_{s_2}(p)$, and in particular near non-minimal components of $\Sigma_2$. 

\vspace{.3cm}
\textit{Claim 1: there exists a set $\Om^{**}\in\C(M)$ satisfying the following: Suppose $q\in \spt(V^{**})\cap \partial B_{s_2}(p)$, and $\Sigma_1$, $\Sigma_2$ have multiplicity $1$ in a neighborhood of $q$.
Then there exists $\epsilon>0$ so that 
\begin{itemize}
\item[a)] $\Si_1$ and $\Si_2$ are the boundaries of $\Om^{**}$ in $A_{s_2, t}(p)\cap B_\epsilon(q)$ and $A_{s_1, s_2}(p)\cap B_\epsilon(q)$ respectively;
\item[b)] $\nu_1, \nu_2$ coincide with the outer unit normal of $\Om^{**}$ in $A_{s_2, t}(p)\cap B_\epsilon(q)$ and $A_{s_1, s_2}(p)\cap B_\epsilon(q)$ respectively;
\item[c)] if $\|V^{**}\|(\partial B_{s_2}(p))=0$, then $V^{**}$ is identical to $|\partial\Om^{**}|$ in $B_\epsilon(q)\cap M$.
\end{itemize}}

The claim follows by constructing the $\epsilon_i$-replacements $\Omega_i^{**}\rightarrow V^{**}$ from corresponding replacements $\Omega_i^*\rightarrow V^*$ (in particular, with consistent orientations).  We refer the reader to \cite[Theorem 6.1, Claim 1]{Zhou-Zhu17} for the details.

\subsection*{Step 2}

We now show that $\Sigma_1$ and $\Sigma_2$ glue smoothly (as immersed hypersurfaces) across $\partial (B_{s_2}(p)\cap M)$. 
Indeed, define the intersection set
\begin{equation}
\Ga=\Si_1\cap \partial(B_{s_2}(p)\cap M),\quad \mS(\Ga)=\Ga\cap \mS(\Si_1).
\end{equation}
Then by transversality, $\Ga$ is an almost embedded hypersurface in $\partial(B_{s_2}(p)\cap M)$, and $\mS(\Ga)$ is its touching set.  Notice that
\begin{equation}
\label{E:openness of Gamma}
\text{$\mS(\Ga)$ is closed, and $\mR(\Ga)=\Ga\backslash\mS(\Ga)$ is open in $\Ga$.}
\end{equation}
It follows from the maximum principle that 
\[ \Clos(\Si_2)\cap \partial(B_{s_2}(p)\cap M)\subset\Ga. \] 
Indeed, (\ref{E:corollary-max-principle}) implies that any $y \in \Clos(\Si_2)\cap \partial(B_{s_2}(p)\cap M)$ is also a limit point of $\spt\|V^{**}\|$ from the outer side of $\partial B_{s_2}(p)$, on which $V^{**}$ coincides with $\Sigma_1$. In fact, with a little more work we have

\vspace{.3cm}
\textit{Claim 2: $\Clos(\Si_2)\cap \partial(B_{s_2}(p)\cap M)=\Ga$, and then $\Si_1$ glues together continuously with $\Si_2$.}
\vspace{.3cm}

\textit{Proof of Claim 2:}  By Proposition \ref{P:good-replacement-property}(i), we have
\begin{equation}
\label{E:agreement-with-sigma1}
V^*=V^{**}=\text{$\Si_1$,}\quad \text{ in $A_{s_2, t}(p)\cap M$.}
\end{equation} 
Given any $x\in \Ga$, using (\ref{E:agreement-with-sigma1}), Proposition \ref{P:tangent-cone} and the fact that $\Si_1$ meets $\partial (B_{s_2}(p)\cap M)$ transversally, we have
\begin{equation}
\label{E:tangent cones for the second replacement}
\VarTan(V^{**}, x) = \{ \Theta^n(\|V^*\|, x) \; |T_x \Si_1| \}.
\end{equation}
This implies that $x$ is a limit point of $\spt\|V^{**}\|$ from inside of $\partial B_{s_2}(p)$, and thus completes the proof of the claim. \qed
\vspace{.3cm}

\vspace{.3cm}
Furthermore, we will show that $\Si_1$ glues with $\Si_2$ in $C^1$, i.e. the tangent spaces of $\Si_1$ and $\Si_2$ agree along $\Ga$, with matching normals. 

First we have the following.

\vspace{.3cm}
\textit{Claim 3: Fix $x\in\Gamma$, and denote the plane $P_x = T_x\Sigma_1$ (with multiplicity 1). Then for any sequence of $x_i\to x$ with $x_i\in \Ga$ and $r_i\to 0$, up to a subsequence we have}
\[ \lim_{i \to \infty} (\bleta_{x_i,r_i})_\sharp V^{**} = \begin{cases}
\Theta^n(\|V^*\|,x) P_x, &  \text{if } x\in \mR(\Gamma), \\
P_x+\btau_v P_x, & \text{if } x\in \mS(\Gamma), \liminf_{i \to \infty} \dist_{\R^L}(x_i, \mS(\Ga))/r_i=\infty,\\
2P_x, &  \text{if } x\in \mS(\Gamma), \liminf_{i \to \infty} \dist_{\R^L}(x_i, \mS(\Ga))/r_i<\infty,\end{cases} \]
\textit{where $\btau_w$ denotes translation by a vector $w$, and $v\in (P_x)^\perp$ is a vector in $T_xM$ orthogonal to $P_x$ ($v$ may be $\infty$, in which case $\btau_v P$ is understood to be the empty set).}

\vspace{.3cm}

The proof of Claim 3 follows from the half space theorem for minimal hypersurfaces \cite[Theorem 3]{HM90} and the classical maximum principle for minimal hypersurfaces, after using that $V^*$ and $V^{**}$ coincide on $A_{s_2,t}(p)$ to determine the blowup on a halfspace. We refer the reader to \cite[Theorem 6.1, Claim 3(A)(B)]{Zhou-Zhu17} for the details.

\vspace{.3cm}

Since $\{(\bleta_{x_i,r_i})_\sharp V^{**}: i\in\N\}$ have uniformly bounded first variation, a standard argument using the monotonicity formula implies that, in the Hausdorff topology, either 
\begin{equation}
\label{E:Hausdorff-convergence}
\spt \|(\bleta_{x_i,r_i})_\sharp V^{**}\| \to \begin{cases} T_x \Sigma_1, \text{or}&\\ T_x\Sigma_1 + \btau_v T_x\Sigma_1, & v \in (T_x\Sigma_1)^\perp.\end{cases}
\end{equation}

To show that $\Si_1$ and $\Si_2$ glue together along $\Ga$ in $C^1$ near $q$, we will need:

\vspace{.3cm}

\textit{Claim 4: For each $x\in\Gamma$, we have}
\begin{equation}
\label{E:convergence of tangent planes}
\lim_{z \to x, z \in \Si_2} [T_z\Si_2] = [T_x\Si_1],
\end{equation}
\textit{where $[T_z\Si_2]$ and $[T_x\Si_1]$ denote the un-oriented tangent planes of $\Si_2$ and $\Si_1$ (without counting multiplicity) respectively; and the convergence is uniform in $x$ on compact subsets of $\Gamma$.}

\textit{Moreover, if $x\in \mR(\Gamma)$ and $x$ lies in a multiplicity 1 component of $\Sigma_1$, 
then in fact $\nu_2(z)\rightarrow \nu_1(x)$. }

\vspace{.3cm}

\textit{Proof of Claim 4:} The uniformity follows from the fact that $[T_\cdot\Sigma_1]$ is continuous on $\Gamma$, so we only need to establish the convergence to $[T_\cdot\Sigma_1]$.  

So consider a sequence $z_i\in \Sigma_2$ converging to some $x\in \Ga$. 
Take $x_i \in \Ga$ to be the nearest point projection (in $\R^L$) of $z_i$ to $\Ga$ and $r_i=|z_i-x_i|$. Note that $x_i \to x \in \Ga$ and $r_i \to 0$, so we are in the situation of Claim 3. Note that $\Si_2 \cap B_{r_i/2}(z_i)$ is an almost embedded, stable $h$-hypersurface in $M$, so by Theorem \ref{T:compactness} a subsequence of the blow-ups $\bleta_{x_i,r_i} (\Si_2 \cap B_{r_i/2}(z_i))$ converges smoothly to a smooth, embedded, stable, minimal hypersurface $\Si_\infty$ contained in a half-space of $T_x M$. 

On the other hand, Claim 3 and (\ref{E:Hausdorff-convergence}) imply that $\bleta_{x_i,r_i} (\Si_2 \cap B_{r_i/2}(z_i))$ converges in the Hausdorff topology to a domain in $T_x \Si_1$. Therefore, we have $\Si_\infty \subset T_x \Si_1$. The smooth convergence then implies that $\nu_2(z_i)$ converges to one of the unit normals $\pm\nu_1(x)$ of $T_x \Sigma_1$. 

Now suppose that $x\in \mR(\Gamma)$ and $x$ lies in a multiplicity 1 component of $\Sigma_1$. Then the multiplicity of $\Sigma_2$ is also $1$ near $x$, by the upper semi-continuity of density function for varifolds with bounded first variation \cite[17.8]{Si83}. Then by (\ref{E:tangent cones for the second replacement}), \cite[Theorem 3.2(2)]{Si83} and Claim 1(c), we have
\begin{equation}
\label{E:V** is boundary}
\text{$\|V^{**}\|(\partial B_{s_2}(p))=0$, and hence $V^{**}=|\partial\Om^{**}|$ in $B_\epsilon(x) \cap M$ for some $\epsilon>0$.}
\end{equation}
This implies that the limit of $\nu_2(z_i)$ must be $\nu_1(x)$ by Claim 1, so \emph{Claim 4} is proved. \qed

\vspace{.3cm}

We first consider the gluing near a regular point $q\in \mR(\Ga)$. By Claim 4, $\Sigma_1$ and $\Sigma_2$ glue together along $\Gamma$ as a $C^1$ hypersurface with matching unit normals and prescribed mean curvature $h$ (in the weak sense). Note that if either $\Sigma_i$ was a minimal hypersurface near $q$ then we may have had to choose the opposite normal to $\nu_i$ to ensure the $C^1$ gluing. (This is allowed since in this situation $h$ vanishes on $\Sigma_i$, so it still has prescribed mean curvature $h$ with respect to the opposite normal.)

The higher regularity then follows from a standard elliptic PDE argument. More precisely, $\Si_1$ and $\Si_2$ can be written as graphs of some functions $u_1, u_2$ over $T_q\Si_1$ respectively. Since $\Sigma_1$ and $\Sigma_2$ both have prescribed mean curvature $h$ with respect to unit normals pointing to the same side of $T_q\Si_1$, they satisfy the same mean curvature type elliptic PDE with inhomogeneous term given by $h|_{\Sigma_1\cup \Sigma_2}$, which has the same regularity as the glued hypersurface $\Sigma_1\cup \Sigma_2$. The higher regularity follows from the elliptic regularity of this PDE. 


Thus we have proven that $\Si_2$ glues smoothly with $\Si_1\cap A_{s_2, t}(p)$ along $\mR(\Ga)$. Moreover, by the unique continuation for elliptic PDE (for instance Corollary \ref{cor:uniqcont}) we know that $\Si_2$ is identical to $\Si_1$  in a neighborhood of $\mR(\Ga)$ in $A_{s_2, t}(p)\cap M$. We now show that the smooth gluing extends to the touching set $\mS(\Ga)$.

\vspace{.3cm}



Now consider an arbitrary fixed singular point $q\in \mS(\Gamma)$. By Lemma \ref{L:reg-replacement}, in some small neighborhood $U\subset M$ of $q$, $\Si_1\cap U$ is the union of two connected, embedded $h$-hypersurfaces $\Si_{1,1}\cup \Si_{1,2}$ with unit normals $\nu_{1,1}$ and $\nu_{1,2}$, such that $\Si_{1,2}$ lies on one side of $\Si_{1,1}$ and they touch tangentially at $\mS(\Si_1)\cap U=\Si_{1,1}\cap\Si_{1,2}$. By Lemma \ref{lem:sameorientation}, 
$\nu_{1,1}=-\nu_{1,2}$ along the touching set $\mS(\Si_1)\cap U$. Denote $\Ga\cap \Si_{1,1}=\Ga_1$ and $\Ga\cap \Si_{1,2}=\Ga_2$, then as embedded submanifolds of $\partial (B_{s_2}(p)\cap U)$, $\Ga_2$ lies on one-side of $\Ga_1$ and they touch tangentially along $\mS(\Ga)\cap U$.

By Claim 4, using the regularity of $\Si_2$ in Lemma \ref{L:reg-replacement}, near $q$ the hypersurface $\Si_2$ can be written as a set of graphs $\{\Si_{2, i}: i=1\cdots l\}$ over the half space $[T_q(\Si_1\cap B_{s_2}(p))]$. 
Now since $\Si_2$ glues smoothly with $\Si_1$ along $\mR(\Ga)$, and since $\mR(\Ga)$ is an open and dense subset of $\Ga$, we know that the set $\{\Si_{2, i}: i=1\cdots l\}$ consists of exactly two elements: one of them, denoted by $\Si_{2,1}$, glues smoothly with $\Si_{1,1}$ along $\Ga_1\backslash \mS(\Ga)$; the other one, denoted by $\Si_{2, 2}$, glues smoothly with $\Si_{1,2}$ along $\Ga_2\backslash \mS(\Ga)$. This, together with (\ref{E:convergence of tangent planes}), implies that the pairs $(\Si_{1,1}, \Si_{2,1})$ and $(\Si_{1,2}, \Si_{2,2})$ glue together in $C^1$ near $q$ respectively (with matching orientations). Again higher regularity follows from the elliptic PDE argument as for the regular part. This completes the smooth gluing near the touching set.

\subsection*{Step 3}
We now wish to extend the replacements, via unique continuation, all the way to the center $p$. 

Henceforth we denote $V^{**}$ by $V^{**}_{s_1}$ and $\Si_2$ by $\Si_{s_1}$ to indicate the dependence on $s_1$. By varying $s_1\in (0, s)$, we obtain a family of nontrivial, smooth, almost embedded, stable $h$-hypersurfaces $\{\Si_{s_1}\subset A_{s_1, s_2}(p)\cap M\}$. Since unique continuation holds for immersed prescribed mean curvature hypersurfaces (Corollary \ref{cor:uniqcont}) by Step 2 we have $\Si_{s_1}=\Si_1$ in $A_{s, s_2}(p)$, and moreover, for any $s_1'<s_1<s$, we have $\Si_{s_1'}=\Si_{s_1}$ in $A_{s_1, s_2}(p)$. Hence 
\[ \Si:=\bigcup_{0<s_1<s} \Si_{s_1}   \]
is a nontrivial, smooth, almost embedded, stable $h$-hypersurface in $(B_{s_2}(p)\backslash\{p\})\cap M$. 

Let $\Sigma^{(i)}$, $i=1,\cdots, l$ be the connected components of $\Sigma$.  Then by (\ref{E:tangent cones for the second replacement}) we also have 
\[\spt\|V^{**}_{s_1}\|=\Si, \text{ in } A_{s_1, s_2}(p), V^{**}_{s_1}=V^* \text{ in } A_{s, s_2}(p),\]
\[ \text{and for any } s_1'<s_1<s, V^{**}_{s_1'}=V^{**}_{s_1} \text{ in } A_{s_1, s_2}(p). \]
By Proposition \ref{P:good-replacement-property}, $V^{**}_{s_1}$ has $c$-bounded first variation and uniformly bounded mass for all $0<s_1<s$. 
Therefore as $s_1\to 0$, the family $V^{**}_{s_1}$ will converge to a varifold $\ti{V}\in\V_n(M)$ with $c$-bounded first variation, i.e.
\[\ti{V}=\lim_{s_1\to 0} V^{**}_{s_1},\quad \text{such that}\]
\begin{equation}
\label{E:tilde V}
\ti{V} =\left\{ \begin{array}{cl}
\sum_{i=1}^l m_i \Sigma^{(i)} & \text{in } (B_{s_2}(p)\backslash\{p\})\cap M \\
V^* & \text{in } M\backslash B_s(p) \end{array}, \text{ and } \|\ti{V}\|(\{p\})=0,\right. 
\end{equation}
where $m_i>1$ only if $\Sigma^{(i)}$ is minimal.  Since $p\in\spt\|V^{**}_{s_1}\|$, by the upper semi-continuity of density function for varifolds with bounded first variation \cite[17.8]{Si83}, we know that $p\in\spt\|\ti{V}\|$.

\subsection*{Step 4}

We now determine the regularity of $\ti{V}$ at $p$.

First, $\ti{V}$ has $c$-bounded first variation. Second, $\spt \|\ti{V}\|$, when restricted to any small annulus $A_{r, 2r}(p)\cap M$, already coincides with a smooth, almost embedded, stable $h$-hypersurface $\Si$.  Using these two ingredients, we can use a standard blowup argument (see for instance Proposition \ref{P:tangent-cone} or \cite[Proposition 5.11]{Zhou-Zhu17}) to show that every tangent varifold of $\ti{V}$ at $p$ is an integer multiple of some $n$-plane, i.e. for any $C \in \VarTan(V,p)$, 
\[ C= \Theta^n (\|\ti{V}\|, p) |S|, \text{ for some $n$-plane $S \subset T_p M$ where $\Theta^n(\|\ti{V}\|, p)\in\N$}. \]

Now the removability of the singularity of $\ti{V}$ at $p$ (as an almost embedded hypersurface) follows similarly to \cite[Theorem 7.12]{P81}. We include the details for completeness. We can assume that
\[ \Theta^n (\|\ti{V}\|,p)=m \]
for some $m \in \mathbb{N}$. Since $\Sigma$ is stable in a punctured ball of $p$, by Theorem \ref{T:compactness}, for any sequence $r_j \to 0$, 
\[ \bleta_{p,r_j}(\Sigma) \to m\cdot S \]
locally smoothly in $\mathbb{R}^L \setminus \{0\}$ for some $n$-plane $S \subset T_p M$. However, $S$ may depend on the sequence $r_j$. By the convergence and the regularity of $\Si$, there exists $\si_0>0$ small enough, such that for any $0<\si\leq \si_0$, $\Si$ has an ordered (in the sense of Definition \ref{def:comparisonsheets}) graphical decomposition in $A_{\si/2, \si}(p)$:
\begin{equation}
\label{E: annuli-decomposition}
\ti{V}\lc A_{\si/2, \si}(p)=\sum_{i=1}^{l}m_i|\Si_i(\si)|, \qquad \sum_{i=1}^l m_i = m 
\end{equation}
Here $\Si_i(\si)$ are distinct graphs over $A_{\si/2, \si}(p)\cap S$ for some $n$-plane $S\subset T_p M$, and $m_i>1$ only if $\Sigma_i(\sigma)$ is minimal.

Since (\ref{E: annuli-decomposition}) holds for all $\si$, by continuity of $\Si$ we can continue each $\Si_i(\si_0)$ to $(B_{\si_0}(p)\setminus\{p\})\cap M$, and we denote this continuation by $\Si_i$. Since each piece $\Si_i$ has prescribed mean curvature $h$, by a standard extension argument (c.f. the proof in \cite[Theorem 4.1]{HL75}), each $\Si_i$ can be extended as a varifold with $c$-bounded first variation in $B_{\si_0}(p)\cap M$ (recall $c=\sup|h|$). 
Given $C_i\in \VarTan(\Si_i, p)$, to see that $C_i$ has multiplicity one, first notice that
\begin{equation}
\label{E:density=1}
\Theta^n(\|C_i\|,p) \geq 1, 
\end{equation}
since each $\Si_i$ is $h$-stable and thus its re-scalings converge with multiplicity to a smooth, embedded, stable, minimal hypersurface by Theorem \ref{T:compactness}. If equality does not hold for some $i$ in (\ref{E:density=1}), this will derive a contradiction since 
\[ \tilde{V}\lc B_{\si_0}(p)= \sum_{i=1}^l m_i |\Si_i|, , \qquad \sum_{i=1}^l m_i = m \]
Therefore, each $\Si_i$ has $c$-bounded first variation in $B_{\si_0}(p) \cap M$ and $\Theta^n(\||\Sigma_i|\|,p)=1$; by the Allard regularity theorem  \cite[Theorem 24.2]{Si83} and elliptic regularity, $\Si_i$ extends as a smooth, embedded $h$-hypersurface across $p$. Moreover, each minimal $\Sigma_i$ extends as a smooth minimal hypersurface across $p$. Finally, by the maximum principle (Lemma \ref{lem:sameorientation}, Lemma \ref{lem:minsheet}) 
either some sheet $\Sigma_i$ is indeed minimal so all sheets must coincide and $\tilde{V} \lc B_{\sigma_0}(p) = m [\Sigma_i]$, or none are minimal and $m=1$ or $m=2$. This shows that $\ti{V}$ extends as an almost embedded $h$-hypersurface across $p$ with the desired regularity.

\subsection*{Step 5}

Finally, to complete the proof we show that $V$ coincides with $\ti{V}$ on a small ball about $p$. 

We will need the following simple corollary of the first variation formula.

\begin{lemma} 
\label{L: tangent set of V}
For small enough $r$ the set
\[ \text{Tr}^V_p=\left\{ y \in \spt\|V\|\cap (B_r(p)\backslash\{p\}) : \begin{array}{l}
\VarTan(V, y) \text{ consists of an integer multiple of an }\\
\text{$n$-plane transverse to } \partial (B_{\dist(y, p)}(p)\cap M)
\end{array} \right\}\]
is a dense subset of $\spt \|V\|\cap B_r(p)$.
\end{lemma}

\textit{Claim 5: For small enough $r$, $\spt\|V\|=\Si$ in the punctured ball $(B_r(p)\backslash\{p\})\cap M$.}
 
\vspace{.3cm}
\textit{Proof of Claim 5:}  We first prove that $\text{Tr}^V_p\subset \Si$, which combined with Lemma \ref{L: tangent set of V} will imply that $\spt\|V\|\cap (B_r(p)\backslash\{p\})\subset \Si.$ Fix $y \in \text{Tr}^V_p \cap (B_r(p) \backslash \{p\})$, and let $\rho=\dist(y, p)$. Consider $V^{**}_\rho$. By transversality we have $ y  \in \Clos(\spt\|V\|\cap B_\rho(p))$. On the other hand, $V^{**}_{\rho}=V^*=V$ inside $B_\rho(p)$, so by (\ref{E:corollary-max-principle}) we have
\begin{eqnarray}\nonumber \Clos(\spt\|V\|\cap B_\rho(p)) \cap \partial B_\rho(p)&=&\Clos(\spt\|V^{**}_\rho\|\cap B_\rho(p)) \cap \partial B_\rho(p)\\&\subset&\nonumber \Clos\left( \spt \|V^{**}_\rho\| \setminus \Clos(B_\rho(p))\right) \cap \partial B_\rho(p).\end{eqnarray}
Since $\spt \|V^{**}_{\rho}\|= \Si$ on $A_{\rho, s_2}(p)$, we therefore have $y \in \Si$. 

Next we show the reverse inclusion $\Si\subset \spt\|V\|$. Since $\Si$ extends across $p$ as an almost embedded hypersurface, we know that $T_y\Si$ is transverse to $\partial (B_{\dist(y, p)}(p)\cap M)$ for all $y\in\Si\cap B_r(p)$ for small enough $r$. 
Let $\rho$ and $V^{**}_\rho$ be as above, then $y\in \spt\|V^{**}_\rho\|$. By Proposition \ref{P:tangent-cone}, $\VarTan(V^{**}_\rho, y)=\{ \Theta^n(\|V^{**}_\rho\|, y) |T_y \Si| \}$. By the transversality, we then have $y\in \Clos(\spt\|V^{**}_\rho\|\cap B_\rho(p))$, so since $V^{**}_\rho=V$ inside $B_\rho(p)$ we conclude that $y\in\Clos(\spt\|V\|\cap B_\rho(p))\subset \spt\|V\|$ as desired. \qed

\vspace{.3cm}

Note that we do not have a suitable Constancy Theorem (c.f. \cite[41.1]{Si83}) for varifolds with bounded first variation. In order to show that $V$ coincides with $\Si$ near $p$, our strategy is to show that $V=\ti{V}$ as varifolds in a neighborhood of $p$. By the transversality argument as above, we only need to show that the densities of $V$ and $\ti{V}$ are identical along $\Si\cap (B_r(p)\backslash\{p\}$, where $r$ is chosen as in Claim 5. 

\vspace{.3cm}
\textit{Claim 6: $\Theta^n(\|V\|, \cdot)=\Theta^n(\|\ti{V}\|, \cdot)$ on $\Si\cap B_r(p)\backslash\{p\}$.} 

\vspace{.3cm}
\textit{Proof of Claim 6:} 
Let $y\in\Si$ and $\rho=\dist(y,p) <r$ be as above. Then since $V^{**}_\rho=V$ inside $B_\rho(p)$, by transversality and Proposition \ref{P:tangent-cone} we have $\VarTan(V, y)=\VarTan(V^{**}_\rho, y)$. But $V^{**}_\rho = \tilde{V}$ on $A_{\rho,s_2}(p)$, so we must have $\VarTan(V^{**}_\rho, y)=\{ \Theta^n(\|\ti{V}\|, y)|T_y \Si| \}$. Thus $\Theta^n(\|V\|, y)=\Theta^n(\|\ti{V}\|, y)$.  \qed

Combining Claims 5 and 6 yields that $V=\tilde{V}$ on $B_r(p)\cap M$. This finishes the proof of Step 5, and hence also completes the proof of the main Theorem \ref{T:main-regularity}.
\end{proof} 

Finally we have the following result which is implied by the above proof. 
\begin{proposition}
Let $V$ be as in Theorem \ref{T:main-regularity}. Assume that $V=\lim_{i\to\infty}|\partial\Om_i|$, where $\{\Om_i\}$ are the approximating sets of the $h$-almost-minimizing varifold $V$, given by Definition \ref{D:c-am-varifolds}. If there are no minimal sheets in $\Sigma=\spt V$, then $\Sigma$ is a boundary of some $\Om\in\C(M)$ where its mean curvature with respect to the unit outer normal is $h$, and
\[ \Ac(\Om)=\lim_{i\to\infty}\Ac(\Om_i). \]
In particular, if $V\in C(S)$ for some critical sequence $S\in\Pi$ as given by Theorem \ref{T: existence of almost minimizing varifold} and the remarks following it, we have
\[  \Ac(\Om)=\bL^h(\Pi).  \]
\end{proposition}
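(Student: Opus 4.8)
The plan is to recover $\Om$ as a subsequential $L^1$-limit of the approximating sets $\Om_i$ and to show that passing to the boundary currents loses no mass, so that $\partial\Om$ is precisely the natural boundary current $[[\Si]]$; essentially this means extracting extra information from the proof of Theorem \ref{T:main-regularity}. First I would set up compactness: since $\{\Om_i\}$ lies in a $(1,\M)$-homotopy sequence, $\sup_i\M(\partial\Om_i)<\infty$, and trivially $\M([[\Om_i]])\leq\vol(M)$, so after passing to a subsequence $\chi_{\Om_i}\to\chi_\Om$ in $L^1(M)$ for some $\Om\in\C(M)$, with $\partial\Om_i\rightharpoonup\partial\Om$ weakly as currents while $|\partial\Om_i|\to V$ as varifolds is retained. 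Lower semicontinuity then gives $\|\partial\Om\|\leq\|V\|$, hence $\spt\|\partial\Om\|\subseteq\spt\|V\|=\Si$; moreover $\M(\partial\Om_i)=\|\partial\Om_i\|(M)\to\|V\|(M)=\mH^n(\Si)$, using that the touching set $\mS(\Si)$ is $(n-1)$-dimensional (Proposition \ref{P:smooth touching set}) and so $\mH^n$-negligible, and $\int_{\Om_i}h\to\int_\Om h$.

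Next I would analyze the structure of $\partial\Om$. As an integral current supported in the rectifiable set $\Si$, on which $V$ has multiplicity one along the open dense full-measure regular set $\mR(\Si)$, it must have the form $\partial\Om=\theta\,[[\Si]]$ with $\theta:\Si\to\mZ$ and $|\theta|\leq1$; here $[[\Si]]$ is oriented by the unit normal $\nu_\Si$ along $\mR(\Si)$ with respect to which $H_\Si=h$, which is globally well defined precisely because, with no minimal sheets, $h$ vanishes on no component identically. Since $\partial(\partial\Om)=0$ and $\Si$ is smooth on $\mR(\Si)$, the density $\theta$ is locally constant on $\mR(\Si)$.

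The crux, which I expect to be the main obstacle, is to upgrade this to $\theta\equiv1$, i.e. $\partial\Om=[[\Si]]$ with full multiplicity and the $\nu_\Si$-orientation. This step cannot be done varifold-theoretically alone: for a general sequence of Caccioppoli sets one can have $|\partial\Om_i|$ converging to a multiplicity one varifold while the boundary currents $\partial\Om_i\rightharpoonup0$ (fine oscillation), and ruling this out is exactly where the almost minimizing property of the $\Om_i$ enters. Concretely I would work locally near a point $p\in\Si$ and revisit the replacement construction in the proof of Theorem \ref{T:main-regularity}: the iterated replacements $\Om^{**}_i$ agree with $\Om_i$ off the replacement annuli contained in $B_t(p)$, and their varifold limit $V^{**}$ is, by Claim 1 of that proof, locally given by $|\partial\Om^{**}|$ for a Caccioppoli set $\Om^{**}$ near every multiplicity one point, with orientations inherited from the $\Om_i$. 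Feeding this into the transversality and maximum principle arguments of Step 5 there (together with Lemma \ref{L: tangent set of V}), which identify $V$ with the extended replacement $\ti V$ on a small ball $\sB_p$ about $p$, I would conclude $\partial\Om\lc\sB_p=[[\Si]]\lc\sB_p$. Since $p$ is arbitrary this gives $\partial\Om=[[\Si]]$ globally, so $\Si=\partial\Om$ and its mean curvature with respect to the outer normal $\nu_{\partial\Om}=\nu_\Si$ is $h$.

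Finally I would collect the pieces: $\M(\partial\Om)=\mH^n(\Si)=\|V\|(M)$, and since every subsequential $L^1$-limit of $\{\Om_i\}$ then has boundary $[[\Si]]$, the limit $\Om$ is unique and $\chi_{\Om_i}\to\chi_\Om$ along the full sequence; hence $\Ac(\Om)=\M(\partial\Om)-\int_\Om h=\lim_{i\to\infty}\big(\M(\partial\Om_i)-\int_{\Om_i}h\big)=\lim_{i\to\infty}\Ac(\Om_i)$. For the last assertion, when $V\in C(S)$ is produced as in Theorem \ref{T: existence of almost minimizing varifold} and the remarks following it, the $\Om_i$ may be chosen to be a min-max sequence, so $\Ac(\Om_i)\to\bL^h(S)=\bL^h(\Pi)$ and therefore $\Ac(\Om)=\bL^h(\Pi)$.
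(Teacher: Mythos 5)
Your proposal follows essentially the same route as the paper's proof — extract the $L^1$/Caccioppoli limit $\Om$, observe that $\spt\|\partial\Om\|\subset\Si$, and then use the replacement construction from the proof of Theorem \ref{T:main-regularity} to upgrade the weak limit to the current equality $\partial\Om=[[\Si]]$. You correctly identify the genuine obstruction: varifold convergence $|\partial\Om_i|\to V$ does not by itself control the limiting current, and the almost-minimizing structure must be invoked.

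However, the way you close that gap is muddled, and I want to flag it explicitly. You write that you would feed the boundary information from Claim~1 "into the transversality and maximum principle arguments of Step~5 \dots which identify $V$ with the extended replacement $\ti V$" to conclude $\partial\Om\lc\sB_p=[[\Si]]\lc\sB_p$. But the Step~5 arguments (Claims 5 and 6 and Lemma \ref{L: tangent set of V}) are purely varifold-level: they compare supports and densities of $V$ and $\ti V$, and never touch the current $\partial\Om$. They cannot deliver a signed, oriented current identity. What actually does the work — and what the paper uses — is more elementary: the constrained minimizers $\Om_i^*$ from the \emph{first} replacement are locally $\Ac$-minimizing, so $\partial\Om_i^*$ are smoothly embedded $h$-hypersurfaces in $\interior(K)$ converging \emph{smoothly} with multiplicity one to $\Si$ there; smooth multiplicity-one convergence implies current convergence $\partial\Om_i^*\to[[\Si]]$ in $\interior(K)$; and since $\spt(\Om_i^*-\Om_i)\subset K$, the Caccioppoli limit $\Om^*$ of $\Om_i^*$ agrees with $\Om$ outside $K$, whence the Constancy Theorem (applied using $\Si$ as the ambient space) forces $\Om^*=\Om$ and so $\partial\Om=[[\Si]]$ in $\interior(K)$. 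Your route through the second replacement $\Om^{**}_i$ and Claim~1 can be made to work, but you should replace the appeal to Step~5 transversality/maximum-principle arguments with this Constancy Theorem comparison; as written, the crucial inference does not follow from what you cite.

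The remaining bookkeeping — $\M(\partial\Om_i)\to\|V\|(M)$ because $\mS(\Si)$ is $\mH^n$-null, $\int_{\Om_i}h\to\int_\Om h$ from $L^1$ convergence, uniqueness of the limit $\Om$ once $\partial\Om=[[\Si]]$ is fixed as an oriented current, and the final identification $\Ac(\Om)=\bL^h(\Pi)$ for a min-max sequence — is all correct and matches the paper.
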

\begin{proof}
It suffices to prove that $\partial\Om_i$ sub-converges to $\Sigma$ weakly as currents. Take $\Om$ as the weak limit of $\Om_i$ as Caccioppoli sets (up to a subsequence), then $\spt(\partial\Om)\subset \Sigma$, and $|\partial\Om|=0$ or $\Sigma$ as varifolds by the Constancy Theorem (by taking $\Sigma$ as the ambient space). Fix an arbitrary regular (non-touching) point $p$ in $\Sigma$. As in Step 1 of the proof of Theorem \ref{T:main-regularity}, we can take a first replacement $V^*$ near $p$. In fact, we showed that \textit{a posteriori} $V^*$ coincides with $V$. 

The construction of $V^*$, however, came with the constrained minimizers $\Om_i^*$ by Proposition \ref{P:good-replacement-property}. The Constancy Theorem still implies that $\Om_i^*$ converges weakly to $\Om$, but now according to the proof of Lemma \ref{L:reg-replacement}, the $\partial\Om_i^*$ are all smoothly embedded $h$-hypersurfaces, converging smoothly to $\Sigma$ in $\interior(K)$. Therefore $\partial\Om_i^*$ must converge to $\Sigma$ as currents in $\interior(K)$. This shows that $\partial\Om=\Sigma$ inside $\interior(K)$ as currents, and hence they must coincide everywhere, which concludes the proof.
\end{proof}


\appendix

\section{An interpolation lemma}
\label{A:An interpolation lemma}

The following interpolation lemma was proved in in \cite[Appendix A]{Zhou-Zhu17} when $h$ is a constant function. This type of result was essentially due to Pitts \cite[Lemma 3.8]{P81}, but the modification to find the interpolation sequence using boundaries of Caccioppoli sets was completed by the first author \cite[Proposition 5.3]{Zhou15b}. The extension to non-constant functions $h$ proceeds similar to \cite[Appendix A]{Zhou-Zhu17}, the key being to control $\left|\int_\Omega h\right|\leq c\vol(\Omega)$, so we omit the details here.
\begin{lemma}
\label{L:interpolation1}
Suppose $L>0$, $\eta>0$, $W$ is a compact subset of $U$, and $\Om\in\C(M)$. Then there exists $\de=\de(L, \eta, U, W, \Om)>0$, such that for any $\Om_1, \Om_2\in \C(M)$ satisfying
\begin{itemize}
\item[$(a)$] $\spt(\Om_i-\Om)\subset W$, $i=1, 2$,
\item[$(b)$] $\M(\partial \Om_i)\leq L$, $i=1, 2$,
\item[$(c)$] $\F(\partial\Om_1-\partial\Om_2)\leq \de$,
\end{itemize}
there exist a sequence $\Om_1=\La_0, \La_1, \cdots, \La_m=\Om_2\in \C(M)$ such that for each $j=0, \cdots, m-1$,
\begin{enumerate}[label=(\roman*)]
\item $\spt(\La_j-\Om)\subset U$;
\item $\Ac(\La_j)\leq \max\{\Ac(\Om_1), \Ac(\Om_2)\}+\eta$;
\item $\M(\partial\La_j-\partial\La_{j+1})\leq \eta$.
\end{enumerate}
\end{lemma}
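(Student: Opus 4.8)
The plan is to reduce to the already-known version of this statement for the mass functional (equivalently, for $\mathcal{A}^c$ with constant prescription) and then correct the conclusion using the lower-order volume term. First I would invoke the interpolation lemma for boundaries of Caccioppoli sets with mass control, i.e. the $\mathcal{A}^c$-case of this statement proved in \cite[Appendix A]{Zhou-Zhu17} (which in turn rests on Pitts \cite[Lemma 3.8]{P81} and the Caccioppoli-set refinement \cite[Proposition 5.3]{Zhou15b}): with tolerance $\eta/2$ in place of $\eta$ and the same data $L,U,W,\Om$, this produces $\de_0=\de_0(L,\eta,U,W,\Om)>0$ so that whenever $\Om_1,\Om_2$ satisfy (a), (b) and $\F(\partial\Om_1-\partial\Om_2)\le\de_0$ there is a chain $\Om_1=\La_0,\dots,\La_m=\Om_2$ with $\spt(\La_j-\Om)\subset U$, $\M(\partial\La_j)\le\max\{\M(\partial\Om_1),\M(\partial\Om_2)\}+\eta/2$ and $\M(\partial\La_j-\partial\La_{j+1})\le\eta/2$. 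Moreover the $\La_j$ are built from an isoperimetric filling $Q$ of the $n$-cycle $\partial\Om_1-\partial\Om_2$, cut into small pieces $Q=Q_1+\cdots+Q_m$, via $[[\La_j]]=[[\Om_1]]-(Q_1+\cdots+Q_j)$; so, shrinking $\de_0$ if necessary, $\vol(\La_j\triangle\Om_1)\le\M(Q)\le\omega(\de_0)$ for a modulus $\omega$ with $\omega(\de)\to0$ as $\de\to0$. (This uses the isoperimetric inequality for integral cycles of small flat norm on the closed manifold $M$, together with the constancy theorem and the support constraints (a) to exclude the alternative that the relevant top-dimensional current is a copy of $[[M]]$.) In particular $\vol(\Om_1\triangle\Om_2)=\vol(\La_m\triangle\Om_1)\le\omega(\de_0)$.

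Second, I would upgrade the mass bound to an $\Ac$ bound. Writing $c=\sup|h|$, for each $j$ one has $\int_{\Om_1}h-\int_{\La_j}h\le c\,\vol(\La_j\triangle\Om_1)$, hence
\[ \Ac(\La_j)=\M(\partial\La_j)-\int_{\La_j}h\le\max\{\M(\partial\Om_1),\M(\partial\Om_2)\}+\tfrac{\eta}{2}-\int_{\Om_1}h+c\,\omega(\de_0). \]
If $\M(\partial\Om_1)\ge\M(\partial\Om_2)$, the first term equals $\M(\partial\Om_1)$, so the right side is $\Ac(\Om_1)+\tfrac{\eta}{2}+c\,\omega(\de_0)$; if instead $\M(\partial\Om_2)>\M(\partial\Om_1)$, then $\M(\partial\Om_2)-\int_{\Om_1}h\le\Ac(\Om_2)+c\,\vol(\Om_1\triangle\Om_2)\le\Ac(\Om_2)+c\,\omega(\de_0)$. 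In either case
\[ \Ac(\La_j)\le\max\{\Ac(\Om_1),\Ac(\Om_2)\}+\tfrac{\eta}{2}+2c\,\omega(\de_0). \]
Choosing $\de\le\de_0$ small enough that $2c\,\omega(\de)<\eta/2$ then gives conclusion (ii); conclusions (i) and (iii) are inherited directly from the mass interpolation (using $\eta/2\le\eta$). Thus this $\de$ works in Lemma \ref{L:interpolation1}.

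The main obstacle is bookkeeping rather than new analysis: all of the substantive work — choosing a good slicing of the filling $Q$ so that the pieces $Q_i$ have simultaneously small mass and small boundary mass, and verifying the chain stays inside $U$ — is contained in the cited references and is imported wholesale. The only genuinely new observation is that replacing the constant $c$ in the $\mathcal{A}^c$-functional by $\sup|h|$ leaves every estimate in \cite[Appendix A]{Zhou-Zhu17} intact, because $|\int_\Om h|\le\sup|h|\cdot\vol(\Om)$ and the enclosed volume varies continuously (with modulus $\omega$) along the constructed chain. The one subtlety worth double-checking is the exclusion of the ``differs by all of $M$'' alternative when comparing symmetric differences to flat distance, which is exactly where the support hypotheses (a) and the smallness of $\de$ are used.
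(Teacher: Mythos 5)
Your proposal is correct and takes essentially the same approach as the paper: the paper's own proof is deferred to the $\mathcal{A}^c$-case in \cite[Appendix A]{Zhou-Zhu17} (which itself imports the mass interpolation from Pitts and \cite[Proposition 5.3]{Zhou15b}), and the paper explicitly states that the only new ingredient is the control $\left|\int_\Omega h\right|\leq c\,\vol(\Omega)$, which is exactly your step of upgrading the mass bound plus volume control to an $\mathcal{A}^h$ bound. The two-case bookkeeping you carry out to pass from $\max\{\M(\partial\Om_1),\M(\partial\Om_2)\}$ to $\max\{\mathcal{A}^h(\Om_1),\mathcal{A}^h(\Om_2)\}$ matches the paper's deduction that the mass bound and the symmetric-difference volume bound ``immediately imply'' the $\mathcal{A}^h$ estimate.
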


\section{Interpolation process}
\label{A:proof of claim 2}

\begin{proof}[Proof of Claim 2 in Proposition \ref{P:tightening}]
Here we describe the construction of $\{\phi_i\}$ by interpolating $\{\phi^1_i\}$. 

Fix $i\in\N$ and consider a $1$-cell $\al\in I(1, k_i)$. We only need to show how to interpolate $\phi^1_i$ when restricted to $\al_0$. For notational simplicity we write $\al=[0, 1]$. For $x\in \alpha$ let $\ti{X}(x)$ be the linear interpolation between $\ti{X}_i(0)=\ti{X}(|\partial\phi^*_i(0)|)$ and $\ti{X}_i(1)=\ti{X}(|\partial\phi^*_i(1)|)$. The continuity of the map $V\to \ti{X}(V)$ implies that $\|\ti{X}_i(x)-\ti{X}_i(0)\|_{C^1(M)}\to 0$ uniformly as $i\to\infty$. Define $\bar{Q}_i(x)$ to be the push-forward of $\phi^*_i(0)$ by the flow of $\ti{X}_i(x)$ up to time $1$; this gives a map $\bar{Q}_i: \alpha \rightarrow \mathcal{C}(M)$. Note that $\partial\bar{Q}_i: \al\to \Z_n(M)$ is continuous under the $\mF$-metric.

Since $\bar{Q}_i(x)$ and $\phi^1_i(0)$ are the push-forwards of the same initial set $\phi^*_i(0)$ under the flows of $\ti{X}_i(x)$ and $\ti{X}_i(0)$ respectively, we have
\begin{equation}
\label{E:mF fineness control1}
\left. \begin{array}{cl}
\mF(\partial \bar{Q}_i(x), \partial \phi^1_i(0))\to 0 \\
\M(\bar{Q}_i(x)- \phi^1_i(0))\to 0
\end{array} \right.,  \text{ uniformly in $x,\alpha$ as $i\to\infty$}.
\end{equation}
As $\bar{Q}_i(1)$ and $\phi^1_i(1)$ are the respective push-forwards of $\phi^*_i(0)$ and $\phi^*_i(1)$ under the same flow of $\ti{X}_i(1)$, we have
\begin{equation}
\label{E:mF fineness control2}
\M(\partial \bar{Q}_i(1)-\partial \phi^1_i(1))\to 0, \text{ uniformly in $\alpha$ as $i\to\infty$}.
\end{equation}

Now we can apply the interpolation result \cite[Theorem 5.1]{Zhou15b} (see also \cite[Theorem 13.1]{MN14}) to $\bar{Q}_i$, which gives that for any $\eta>0$, there exist $l_\eta>0$ and $Q_i: \al(l_\eta)_0\to \C(M)$, such that
\begin{itemize}
\item[(i)] given $x\in \al(l_\eta)_0$, 
\[ \M(\partial Q_i(x)) \leq \M(\partial \bar{Q}_i(x))+\eta/2, \]
so by the same argument as in the proof of point (v) of Lemma \ref{L:interpolation1}, 
\[  \M(Q_i(x)-\bar{Q}_i(x)) \leq \eta/(2c), \]
where $c=\sup |h|$, and hence
\[ \Ac(Q_i(x))\leq \Ac(\bar{Q}_i(x))+\eta; \]

\item[(ii)] $\f(Q_i)\leq \eta$;

\item[(iii)] $\sup\{\F(\partial Q_i(x)-\partial\bar{Q}_i(x)): x\in \al(l_\eta)_0\}<\eta$.
\end{itemize}
When $\eta\to 0$, by (i, iii) and \cite[2.1(20)]{P81} (see also \cite[Lemma 4.1]{MN14}), we have
\[ \lim_{\eta\to 0}\sup\{\mF(\partial Q_i(x), \partial \bar{Q}_i(x)): x\in \al(l_\eta)_0\}=0. \]

Take a sequence $\eta_i\to 0$, and denote $l_i=k_i+l_{\eta_i}+1$, then we construct $\phi_i: I(1, k_i+l_{\eta_i}+1)\to \C(M)$ by defining $\phi_i$ on each $\al(l_{\eta_i}+1)_0$ by
\[ \phi_i(x)= \left\{ \begin{array}{cl}
Q_i(3x) & \text{ for $x\in [0, 1/3]\cap \al(l_{\eta_i}+1)_0$ } \\
\phi_i^1(1) & \text{ otherwise.}
\end{array} \right. \]
The desired properties (a, b, c, d) of $\phi_i$ follow straightforwardly from (\ref{E:mF fineness control1})(\ref{E:mF fineness control2}) and the properties of $Q_j$. Since $\bar{Q}_i$ is obtained from a continuous deformation from $\phi^*_i$, a further interpolation argument shows that $S$ is homotopic to $S^*$, and hence we finish the proof of Claim 2.
\end{proof}

\bibliography{prescribedmc}
\bibliographystyle{plain}

\end{document}